\documentclass{article}
\usepackage{amsmath}
\usepackage{amssymb}
\usepackage{fullpage}
\usepackage{algpseudocode}
\usepackage[ruled,vlined,linesnumbered]{algorithm2e}
\DontPrintSemicolon
\SetKwInput{Input}{Input}
\SetKwInput{Output}{Output}
\SetKw{To}{ to }
\SetKw{Print}{Print }
\SetAlFnt{\small}
\usepackage{graphics}
\usepackage{booktabs}
\usepackage{cite}
\usepackage{bbm}
\usepackage{tikz}
\usepackage{bm}
\usepackage{appendix}
\usepackage{hyperref}
\usepackage[margin=3cm]{geometry}

\usepackage{algorithm2e}

\usepackage{authblk}

\usepackage{graphicx}
\usepackage{subfig}

\usepackage{multirow}
\usepackage{rotating}
\usepackage{verbatim}
\usepackage{optidef}
\usepackage{cleveref}
\usepackage{array}
\usepackage{tabularx}
\usepackage{arydshln}
\usepackage{mathtools}
\usepackage{easybmat}
\usetikzlibrary{positioning}

\usepackage{enumerate}
\usepackage{enumitem}
\usepackage{rotating}
\usepackage{pdflscape}

\usepackage{tikz,forest} 
\usetikzlibrary{arrows.meta}
\forestset{
    .style={
        for tree={
            base=bottom,
            child anchor=north,
            align=center,
            s sep+=1cm,
    straight edge/.style={
        edge path={\noexpand\path[\forestoption{edge},thick,-{Latex}] 
        (!u.parent anchor) -- (.child anchor);}
    },
    if n children={0}
        {tier=word, draw, thick, rectangle}
        {draw, diamond, thick, aspect=2},
    if n=1{%
        edge path={\noexpand\path[\forestoption{edge},thick,-{Latex}] 
        (!u.parent anchor) -| (.child anchor) node[pos=.2, above] {Y};}
        }{
        edge path={\noexpand\path[\forestoption{edge},thick,-{Latex}] 
        (!u.parent anchor) -| (.child anchor) node[pos=.2, above] {N};}
        }
        }
    }
}

\DeclarePairedDelimiterX{\Set}[1]\{\}{\setargs{#1}}
\NewDocumentCommand{\setargs}{>{\SplitArgument{1}{|}}m}
{\setargsaux#1}
\NewDocumentCommand{\setargsaux}{mm}
{\IfNoValueTF{#2}{#1}{\nonscript\,#1\nonscript\;\delimsize\vert\allowbreak \nonscript\:\mathopen{}#2\nonscript\,}}

\newcommand*\xbar[1]{%
  \hbox{%
    \vbox{%
      \hrule height 0.5pt 
      \kern0.5ex
      \hbox{%
        \kern-0.1em
        \ensuremath{#1}%
        \kern-0.1em
      }%
    }%
  }%
} 

\usepackage{tikz,pgfplots,xpatch}
\usetikzlibrary{shapes.multipart,backgrounds,shapes,fit}
\usetikzlibrary{arrows,decorations.markings,shapes.arrows,arrows.meta}

\newcommand{\e}{\varepsilon}
\newcommand{\red}[1]{{\color{red} #1}}

\newcommand{\E}{\mathbb{E}}

\newcommand{\R}{\mathbb{R}}

\renewcommand{\P}{\mathbb{P}}

\DeclareMathOperator{\conv}{\operatorname{conv}}

\DeclareMathOperator{\rank}{\operatorname{rank}}

\newcommand{\ip}{\iprods}

\definecolor{mypink3}{cmyk}{0, 0.7808, 0.4429, 0.1412}

\newcommand{\blue}[1]{{\color{blue} #1}}

\newcommand{\cN}{\mathcal{N}}

\DeclareMathOperator{\argmin}{\operatorname{argmin}}
\DeclareMathOperator{\argmax}{\operatorname{argmax}}
\newcommand{\norm}[1]{\left\lVert#1\right\rVert}

\newcommand{\av}[0]{\mathbf{a}}
\newcommand{\cv}[0]{\mathbf{c}}

\newcommand{\vv}[0]{\mathbf{v}}
\newcommand{\xv}[0]{\mathbf{x}}
\newcommand{\yv}[0]{\mathbf{y}}

\newcommand{\bv}[0]{\mathbf{b}}

\newcommand{\zv}[0]{\mathbf{z}}

\newcommand{\ones}{{\bm{1}}}
\newcommand{\LP}{\text{LP}}

\newcommand{\vol}{\textup{vol}}
\newcommand{\cM}{\mathcal{M}}

\newcommand{\normm}[1]{\big\lVert#1\big\rVert}
\newcommand{\iprods}[2]{\langle {#1}, {#2} \rangle}

\newcommand{\colij}{\av^{(i)}_j}

\newtheorem{thm}{Theorem}[section]

\newtheorem{lemma}{Lemma}[section]

\usepackage{booktabs}

\newtheorem{cor}[thm]{Corollary}
\newtheorem{lem}[thm]{Lemma}

\newtheorem{claim}[thm]{Claim}

\newcommand{\nrc}{R}

\newenvironment{proof}[1][]
   {\textsc{Proof#1: }}
   {\hfill$\Box$\smallskip}

\newcommand{\iprod}[2]{\left\langle {#1}, {#2} \right\rangle}

\newcommand{\OPT}{\textup{OPT}}
\newcommand{\LAG}{\textup{DUAL}}

\newcounter{algorithmctr}[section]
\renewcommand{\thealgorithmctr}{\thesection.\arabic{algorithmctr}}
    {\refstepcounter{algorithmctr}\begin{list}{}{%
        \setlength{\rightmargin}{.05\linewidth}%
        \setlength{\leftmargin}{.05\linewidth}}%
        \item[]{\setlength{\parskip}{0ex}\bigskip\par%
         \nopagebreak%
         \underline{{\bf Algorithm \thealgorithmctr.} \emph{#1.}}}}%
    {{\setlength{\parskip}{-1ex}\nopagebreak\smallskip\par} \end{list}}

\makeatletter
\newsavebox\myboxA
\newsavebox\myboxB
\newlength\mylenA

\makeatother

\title{Probabilistic analysis of dual decomposition on two-stage stochastic integer programs}

\author[1]{Santanu S. Dey\thanks{Email: sdey30@gatech.edu}}
\author[2,3]{Marco Molinaro\thanks{Email: mmolinaro@microsoft.com}}
\author[1]{Jingye Xu\thanks{Email: jxu673@gatech.edu}}

\affil[1]{Georgia Institute of Technology, Atlanta, GA, USA}
\affil[2]{Microsoft Research, Redmond, WA, USA}
\affil[3]{PUC-Rio, Rio de Janeiro, Brazil}

\begin{document}

\maketitle

\begin{abstract}
Two-stage stochastic integer programs provide a powerful framework for modeling
decision-making under uncertainty, but they are notoriously difficult to solve at
scale due to their high dimensionality and intrinsic nonconvexity.
Decomposition-based algorithms such as Benders methods and Branch-and-Price (related dual
decomposition methods) have become standard computational approaches for such
problems and demonstrate excellent empirical performance in practice.
Despite their widespread use, however, existing theoretical guarantees are almost
exclusively based on worst-case analyses, which predict exponential convergence
behavior in the problem dimension and fail to explain the strong performance
observed in practice. In this paper, we present the first average-case analysis of Branch-and-Price
for a broad class of two-stage stochastic binary integer programs.
We study a stochastic-input model in which objective coefficients and constraint
matrices are drawn at random and right-hand-side vectors scale with the decision
dimension, while the number of constraints per scenario is fixed.
Under this model, we prove that, with high probability, Branch-and-Price explores
at most \( n^{O(\log s)} \) nodes, yielding a quasi-polynomial bound on the size of
the search tree in typical instances, where \( n \) denotes the decision dimension
and \( s \) the number of scenarios. A key ingredient of our analysis is an average-case bound on the integrality gap of
the natural linear programming (LP) relaxation. We show that this gap shrinks at rate
\( O\!\left(\frac{\log s \log^2 n}{n}\right) \) with high probability. This result is of
independent interest, as it implies that the integrality gap grows only logarithmically
with the number of scenarios on average. This helps explain why LP-based
heuristics work well in practice, even in large-scale stochastic settings with an enormous number of
scenarios, where LP relaxations with a large number of constraints are usually believed to be weak.
\end{abstract}

\section{Introduction}

Consider the following problem:
\begin{subequations}
\label{eq:IP}
\begin{align}
\OPT(\bv) := \max_{\xv} \ & s \cdot \iprod{\cv^{(0)}}{\xv^{(0)}} + \sum_{i \in [s]} \iprod{\cv^{(i)}}{\xv^{(i)}} \label{eq:obj1} \\
\text{s.t.}\ & A^{(0)} \xv^{(0)} + A^{(i)}\xv^{(i)} \leq \bv^{(i)}, \quad \forall i \in [s], \label{eq:sub}\\
& \xv^{(0)}, \xv^{(i)} \in \{0,1\}^{n}, \quad \forall i \in [s]. \label{eq:origLast}
\end{align}
\end{subequations}
where $A^{(i)} \in \mathbb{Q}^{m \times n}$ and $\cv^{(i)} \in \mathbb{Q}^{n}$ for all
$i \in [s] \cup \{0\}$.

Problem~\eqref{eq:IP} is a two-stage stochastic integer program, a widely used
framework for modeling decision-making under uncertainty \cite{kuccukyavuz2017introduction,liu2016decomposition,romeijndersastochastic,sahinidis2004optimization}.
The vector $\xv^{(0)}$ represents first-stage (here-and-now) decisions that must be
fixed before the uncertainty is realized, while $\xv^{(i)}$ denotes second-stage
(recourse) decisions associated with scenario $i \in [s]$.
Each scenario corresponds to a possible realization of the underlying uncertainty
and is characterized by its right-hand side vector $\bv^{(i)}$ and
scenario-dependent constraint matrix $A^{(i)}$.
In this paper, we focus on the setting in which all decision variables are binary
and the first-stage and second-stage variables have the same dimension.
These assumptions are made solely for notational simplicity and clarity of
exposition.
All results presented here extend in a straightforward manner to settings with
heterogeneous variable dimensions, nonuniform scenario weights, and more general
mixed-integer decision domains.

In practice, uncertainty is typically approximated by a finite set of scenarios
obtained via sampling, scenario reduction, or discretization.
Under this approximation, problem~\eqref{eq:IP} can be viewed as a deterministic
approximation of the original stochastic program.
Although the dimension of~\eqref{eq:IP} grows only linearly with the number of
scenarios~$s$, the associated computational burden often increases superlinearly.
In the worst case, memory requirements, solution time, and solver performance
deteriorate rapidly, and even evaluating a single candidate first-stage solution
may require solving a large number of scenario-specific integer subproblems.

Decomposition methods are therefore particularly appealing for solving
problem~\eqref{eq:IP}, as they exploit the inherent separability across scenarios \cite{boland2018combining,kim2022scalable,kuccukyavuz2017introduction,cifuentes2025lagrangian,rahmaniani2017benders,dey2018analysis}.
By decomposing the problem into smaller, more tractable subproblems, these methods
reduce memory usage, improve computational efficiency, and enable scalable solution
approaches for large-scale instances.
Two general decomposition paradigms are most commonly used in the literature for
two-stage stochastic integer programs: \emph{Benders decomposition} and
\emph{dual decomposition}. Benders decomposition is a cutting-plane approach that separates inequalities through
value-function approximations of the second-stage problem, whereas dual
decomposition reformulates the problem by introducing nonanticipativity
constraints and relaxing these constraints via Lagrangian multipliers.
For comprehensive surveys, we refer the reader to~\cite{rahmaniani2017benders} for Benders decomposition
and~\cite{bragin2024survey} for dual decomposition.

Despite their strong empirical performance, there remains a substantial gap
between the practical success of decomposition methods and the current theoretical
understanding of their convergence behavior for two-stage stochastic integer
programs.
Existing convergence guarantees are almost exclusively derived from worst-case
analyses, which yield exponential complexity bounds of order $\Theta(2^n)$ \cite{sun2024decomposition,zou2019stochastic}.
Such bounds typically rely on the observation that there are at most $2^n$ possible
first-stage integer solutions and that each iteration of a decomposition algorithm
eliminates at least one suboptimal candidate.
However, this pessimistic worst-case behavior is rarely observed in practice.
Extensive computational studies indicate that decomposition methods often converge
in only a small number of iterations, and in many cases even a single iteration
suffices to identify a high-quality or optimal solution \cite{kim2022scalable}.
This pronounced discrepancy motivates the need for a refined, average-case
analysis of the convergence behavior of decomposition methods.

In this paper, we focus on the dual decomposition approach.
Because problem~\eqref{eq:IP} is  nonconvex, there is generally a positive
duality gap between~\eqref{eq:IP} and the Lagrangian dual associated with its
nonanticipativity reformulation. 
As a result, additional branching is required in practice to close this gap and
recover an integral optimal solution.
This leads to algorithmic frameworks commonly referred to as
\emph{Branch-and-Price}, \emph{column generation}, or
\emph{Dantzig--Wolfe decomposition}~\cite{lubbecke2010column,desaulniers2006column}. In this paper, we study the behavior of these algorithms.

To move beyond worst-case analysis, we adopt the common perspective of considering random input instances and focus on average-case performance.
Specifically, we consider the following \textbf{stochastic-input model}:
\begin{enumerate}
    \item All entries of $\cv^{(i)}$ and $A^{(i)}$ are drawn independently and
    uniformly from $[0,1]$, for all $i \in [s] \cup \{0\}$.
    \item The right-hand-side vectors are given by
    $\bv^{(i)}_r = \beta^{(i)}_r \cdot 2n$, where
    $\beta^{(i)}_r \in (\tfrac{1}{4}, \tfrac{1}{2})$ for all
    $i \in [s]$ and $r \in [m]$.
\end{enumerate}

This random model has been considered in prior work (see~\cite{dyer1989probabilistic,dey2021branch}) and is widely
used to evaluate the performance of decomposition methods~\cite{angulo2016improving,bansal2024computational}.
Regarding the choice of the right-hand side, note that if
$\beta^{(i)}_r > \tfrac{1}{2}$ for all $i$ and $r$, then as $n \to \infty$ with $m$
fixed, all binary solutions of~\eqref{eq:IP} become feasible with high probability.
In this regime, the problem admits the trivial optimal solution in which all
decision variables equal one and thus becomes asymptotically uninformative.
For technical reasons (see Section~\ref{sec:bound_dual}), we additionally impose the lower
bound $\beta^{(i)}_r \ge \tfrac{1}{4}$, which is slightly stronger than assumptions
used in~\cite{dyer1989probabilistic}.
Instances with very small values of $\beta^{(i)}_r$ are typically less interesting,
as they admit only a limited number of feasible first-stage solutions.
It is a folklore observation that the most challenging regime occurs when
$\beta^{(i)}_r$ is close to $\tfrac{1}{2}$.
We therefore believe that the proposed stochastic-input model is representative
and well suited for evaluating the typical performance of decomposition-based
algorithms.

In this work, we assume access to an oracle that can exactly solve the
dual decomposition problem, and we measure the convergence behavior of
Branch-and-Price by the number of nodes explored in the Branch-and-Price tree
(Section~\ref{sec:BB} provides precise definitions). We emphasize that this
oracle only requires an optimization oracle for each individual scenario
subproblem of dimension $n$.
Following prior work~\cite{dyer1989probabilistic,borst2023integrality,dey2021branch},
we adopt a fixed-parameter regime in which the number of constraints in each
scenario subproblem is fixed at $m$, while the decision dimension $n$ and the
number of scenarios $s$ grow. 
Unlike previous studies, we do not fix the total number of constraints in
problem~\eqref{eq:IP}; instead, the total number of constraints grows linearly
with the number of scenarios.
Our main result can be stated informally as follows.

\begin{thm}\label{thm:BB_inf}
In the stochastic-input model, for sufficiently large $n$, with probability at
least $0.9999 - \frac{1}{n}$, the Branch-and-Price algorithm
(see Section~\ref{sec:BB}) solves~\eqref{eq:IP} after exploring at
most $n^{O_m(\log s)}$ nodes.
\end{thm}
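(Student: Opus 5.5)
The plan is the standard ``small gap implies small tree'' argument from \cite{dey2021branch}, adapted to the dual decomposition bound. There are three steps.

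\textbf{Step 1: bound the gap between the Lagrangian dual and the integer optimum.} Call the Lagrangian dual value of the nonanticipativity reformulation $\LAG$. We have $\OPT \le \LAG \le \LP$. The abstract announces that the LP integrality gap is $O(\log s\log^2 n/n)$ relative to the scale of the objective. Concretely, the objective is of order $s\cdot n$ because each block contributes $\Theta(n)$. So I would aim for
\[
\LAG-\OPT \le \LP-\OPT \le C_m\, s\log s\log^2 n
\]
with high probability. To get this, take an optimal basic LP solution. It has $O(ms)$ fractional coordinates, since there are $ms$ rows. A naive rounding drops those coordinates and loses about $m s$ per scenario block, which is too much. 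Instead, I would round the first stage jointly: a fractional $\xv^{(0)}$ couples all $s$ constraint blocks. I would use concentration together with a union bound over the $s$ scenarios; this union bound is where the $\log s$ factor enters. I would then restore feasibility by setting to zero the variables with the worst reduced cost. The dual multipliers stay bounded because $\beta\ge 1/4$ (Section~\ref{sec:bound_dual}). Each such zeroing costs $O(\log^2 n)$ per scenario, using the dual-residual argument of Dyer--Frieze. I expect this step to be the main obstacle: the bound must avoid an $s$-dependence worse than $\log s$, even though there are $ms$ constraints.

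\textbf{Step 2: show that few variables have small reduced cost.} Fix optimal Lagrangian/LP duals $u^{(i)}$. Define reduced costs $\bar c^{(i)}_j = c^{(i)}_j - \langle u^{(i)},\av^{(i)}_j\rangle$. For the first stage, use $s c^{(0)}_j-\sum_i\langle u^{(i)},\av^{(0)}_j\rangle$, scaled by $1/s$. The reduced costs are essentially independent uniforms perturbed by a bounded dual. So with high probability only $k=O_m(\log s\log^2 n)$ coordinates have $|\bar c_j|$ below $G/\text{scale}$, where $G$ is the gap from Step 1. To make this uniform over all bounded duals, I would take a union bound over a net of the dual vectors, of size $n^{O(m)}$ per block.

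\textbf{Step 3: bound the tree.} Any node whose subproblem fixes a variable against the sign of a reduced cost larger than the gap has dual bound below $\OPT$, and so it is pruned. This follows by the usual LP-sensitivity inequality $\LAG(\text{node})\le \LAG-|\bar c_j|$. Hence only the $k$ ``ambiguous'' variables can be branched on along surviving paths. By the branching rule of Section~\ref{sec:BB}, the tree depth is at most $k$. Counting subsets of size at most $k$ among $n(s+1)$ variables gives $(ns)^{O(k)}$ nodes. This is too big, so I would refine the count. Only first-stage variables actually require branching: once $\xv^{(0)}$ is fixed, the scenario subproblems are solved exactly by the oracle. So the count becomes $\binom{n}{\le k}2^k=n^{O_m(\log s\log^2 n)}$. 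To reach the stated $n^{O_m(\log s)}$, the first-stage ambiguous set needs size $O_m(\log s)$. I would obtain this by noting that the first-stage reduced cost has density of order $1$ at scale $G/s$, which gives expected count $n\cdot G/(sn)=O(\log s\log^2 n)$ in the worst case. That count must be sharpened to $O(\log s)$ using the $\log^2 n$ slack from the rounding.

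Finally, I would collect the failure probabilities from the gap bound, the net argument and the dual-bound event to get $0.9999-1/n$.
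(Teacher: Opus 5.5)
\textbf{Step 3.} Your Step 3 does not reach $n^{O_m(\log s)}$, and the proposed repair has no basis.

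Per-variable pruning only shows that surviving nodes fix non-ambiguous first-stage variables in agreement with $\xv^{(0)}_*$. Even with the ambiguous set known, that leaves $2^{O(k)}=n^{O(\log s\log n)}$ possibilities for $k=O_m(\log s\log^2 n)$. Shrinking the ambiguous set to $O_m(\log s)$ is not available. A slab of width $\Delta/s$ around a hyperplane through the cube typically contains order $n\Delta/s$ of the uniform columns. So you would need $\Delta=O_m(s\log s/n)$, a $\log^2 n$ factor better than the rounding gives; there is no ``$\log^2 n$ slack'' to spend.

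The missing idea is that pruning is \emph{cumulative}. Let $\Gamma$ be the linear Lagrangian value from an LP saddle point $(\xv_*,\mu_*)$.
\begin{itemize}
\item The best-bound rule, feasibility of the node solution and $\mu_*\ge 0$ give $\Gamma(\bar{\xv}^{(0)})\ge\LAG(N)\ge\OPT$ at every internal node $N$.
\item Linearity lets you round $\bar{\xv}^{(0)}$ to a binary $\xv^{(0)}_N$ obeying the fixings of $N$ without decreasing $\Gamma$.
\item For binary $\xv^{(0)}$ one has $\Gamma(\xv^{(0)})=\OPT_{\LP}-s\sum_j|c^{(0)}_j-\iprod{\overline{\mu}_*}{\av^{(0)}_j}|\,\mathbf{1}(x^{(0)}_j\neq x^{(0)}_{*,j})$.
\end{itemize}
So the coordinates where $\xv^{(0)}_N$ differs from $\xv^{(0)}_*$ have \emph{total} reduced cost at most $\Delta/s$. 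Note the comparison is with $\OPT_{\LP}$, not with $\LAG$ as in your sensitivity inequality.

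You then count such flip sets by dyadic buckets. There are at most $C/2^\ell$ flips among the $O_m(2^\ell\log n)$ columns at distance about $2^\ell\log n/n$ from the hyperplane, where $C=\frac{n}{\log n}\cdot\frac{\Delta}{s}=O_m(\log s\log n)$. On top of these come $O_m(\log n)$ unconstrained coordinates nearest to the hyperplane. This gives $2^{O_m(\log s\log n)}\cdot n^{O_m(1)}=n^{O_m(\log s)}$. The bucket sizes are controlled simultaneously for all hyperplanes by Lemma~\ref{lemma:uniformSlab}; your net argument, run at every scale, could play this role.

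Separately, ``depth at most $k$'' is false under the stated branching rule. That rule may branch on non-ambiguous variables, with one child surviving each time. The paper instead notes that internal nodes with the same $\xv^{(0)}_N$ lie on a single path, which bounds the tree by $2n|\Omega|+1$.

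\textbf{Step 1.} Step 1 is mis-scaled and lacks the needed ingredients. The required (and proved) bound is additive: $\Delta=O_m(s\log s\log^2 n/n)$, i.e.\ $O_m(\log s\log^2 n/n)$ per block, not $C_m s\log s\log^2 n$. With your $G$, the first-stage threshold $G/s$ exceeds every reduced cost, so nothing would be pruned. Your expression $n\cdot G/(sn)$ silently switches to the correct scale.

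Reaching that scale is the bulk of the proof. The paper:
\begin{itemize}
\item solves the LP with right-hand side lowered by $2d=\Theta(\alpha m\log n)$;
\item splits $\Delta$ blockwise, with first-stage dual $\frac1s\sum_i\mu^{(i)}_\diamond$ and slack $d$ per block;
\item proves $\|\mu^{(i)}_\diamond\|_\infty\le\frac{1}{2\alpha}$ for \emph{every} block, which is where $\beta>\frac14$ is used;
\item proves that each block with non-negligible dual has $\Omega_m(n\min\{\|\mu^{(i)}_\diamond\|_1,1\})$ zero coordinates;
\item uses that the zero columns are conditionally uniform on the nonpositive-reduced-cost region;
\item applies the Dyer--Frieze discrepancy lemma to \emph{add} $m\log n$ zero columns with reduced cost at least $-f\log n/n$ that fill the slack to within $1/n$.
\end{itemize}
Your guess that $\log s$ comes from a union bound over blocks is right. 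It enters through $f=\sigma\log s$, so that the per-block failure probability $\pi^f$ survives $s+1$ blocks. But ``setting to zero the variables with the worst reduced cost'' goes the wrong way. Rounding down already preserves the packing constraints. The term that must be made small is the dual-weighted slack $\|\mu^{(i)}_\diamond\|_1\cdot\|d\ones-A^{(i)}(\xv^{(i)}_\diamond-\bar{\xv}^{(i)})\|_\infty$, and removing items only increases it.
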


Theorem~\ref{thm:BB_inf} follows from the combination of two results established
in the subsequent sections. The first result relates the size of the
Branch-and-Price tree to the integrality gap of~\eqref{eq:IP}. The second result
shows that this integrality gap is small with high probability under the
stochastic-input model. Taken together, these results yield the stated bound on
the number of nodes explored by the Branch-and-Price algorithm.

\begin{lem}\label{lem:inform_gap_to_BB}
Consider the stochastic-input model, and suppose that, with probability at least $1-\delta$, the
integrality gap of problem~\eqref{eq:IP} satisfies
$O_m\!\left( \frac{s \log s \log^2 n}{n} \right).$\footnote{We use the notation $O_m(g(n,s))$ to denote a function $f(n,s,m)$ such that for every fixed $m \ge 0$, there exists a constant $c$ such that $f(n,s,m) \le c \cdot g(n,s)$ for all sufficiently large $n,s$.}
Then, for sufficiently large $n$, with probability at least
$1 - \delta - \frac{1}{n}$, the Branch-and-Price algorithm solves~\eqref{eq:IP} after exploring at
most $n^{O_m(\log s)}$ nodes.
\end{lem}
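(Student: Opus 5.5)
Conditioned on the integrality-gap event, the plan is to count the Branch-and-Price nodes by bounding how many first-stage partial assignments can survive pruning. The intended Branch-and-Price scheme branches only on the first-stage variables $\xv^{(0)}$. At a node where a set $F$ of first-stage coordinates has been fixed, the oracle solves the Lagrangian (Dantzig--Wolfe) dual, which is at least as strong as the LP relaxation with those fixings. The node is pruned once its dual bound drops below the incumbent value, which is at least $\OPT(\bv)$ minus the gap once a good incumbent is available. A node survives only if its dual bound exceeds $\OPT$. Every surviving node is therefore a partial assignment of $\xv^{(0)}$ whose LP value is within $G := O_m\!\left(\frac{s\log s\log^2 n}{n}\right)$ of the root LP value. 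So it suffices to show that, with probability $1-1/n$, any partial assignment that deviates from the root LP solution on more than $k = O_m(\log s \cdot \log n)$ coordinates in a suitable sense has LP value below $\LP - G$. Then the depth of any surviving path is effectively $k$ (plus the at most $m$ fractional coordinates), and the tree has at most $\binom{n}{\le k}2^{k} = n^{O_m(\log s)}$ nodes.

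The key step is a reduced-cost argument. Let $(\bar\xv,\bar u)$ be an optimal primal–dual pair of the root LP, with $\bar u = (u^{(1)},\dots,u^{(s)})$ and $u^{(i)}\in\R^m_{\ge0}$. The reduced cost of first-stage coordinate $j$ is $\bar c_j = s\,c^{(0)}_j - \sum_i \iprod{u^{(i)}}{A^{(0)}_j}$. By LP duality, fixing any set $S$ of first-stage coordinates against the sign of their reduced cost lowers the LP value by at least $\sum_{j\in S}|\bar c_j|$. Hence a surviving node can fix against sign only coordinates whose reduced costs sum to at most $G$. I would then prove an anti-concentration statement: with probability $1-1/n$, for every $t$, the number of $j$ with $|\bar c_j|\le t$ is at most $O_m(nt/s + \log n)$. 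Scaling, the reduced costs per scenario are $|\bar c_j|/s$ with $\bar c_j/s = c^{(0)}_j - \iprod{\sum_i u^{(i)}/s}{A^{(0)}_j}$. Given the aggregated dual $w=\sum_i u^{(i)}/s\in\R^m$, these are independent with bounded density in the uniform $c^{(0)}_j$.

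The obstacle is that $w$ depends on the data. I would handle it by a union bound over an $\epsilon$-net of $w$ in a bounded box. The box should be $\|w\|_\infty = O(1)$, which I expect to follow from the bounds on the dual in Section~\ref{sec:bound_dual}; this is where the assumption $\beta\ge 1/4$ is used. The net has size $(n/\epsilon)^{O(m)}$, and a Chernoff bound gives the count bound uniformly over the net. Since fixing against sign, among the smallest reduced costs, forces a sum $\gtrsim \ell^2 s/n$ for $\ell$ such coordinates once $\ell \gg \log n$, we get $\ell^2 s/n \le G$, so $\ell \le \sqrt{\log s}\,\log n$.

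This gives, for any surviving node, only $O_m(\log s\log n)$ against-sign fixings (conservatively). Along-sign fixings do not increase depth beyond what the branching rule allows. Either they are forced for free, or the branching rule chooses fractional variables, of which there are at most $O(m s)$ in the LP. Here I must check that the latter count is really bounded by $m$ for first-stage coordinates after aggregation. The number of subtrees is then $\sum_{\ell\le k}\binom{n}{\ell}\le n^{k}=n^{O_m(\log s)}$. The failure probability is $\delta$ plus $1/n$ from the net argument.
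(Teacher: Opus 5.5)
\textbf{The final count gives the wrong bound.} Your reduced-cost inequality is essentially the paper's Lemma~\ref{lemma:dist}, and your uniform anti-concentration statement is the content of Corollary~\ref{cor:J}. Together they correctly show that a surviving node fixes at most $\ell = O_m(\sqrt{\log s}\,\log n)$ coordinates against the sign of their reduced cost.

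The problem is that you then let these coordinates range over all of $[n]$. With $k = O_m(\log s\log n)$, or even $k = \Theta(\sqrt{\log s}\log n)$, the count $\binom{n}{\le k} \approx n^{k}$ is $2^{\Theta(k\log n)} = 2^{O_m(\log s\log^2 n)}$. The target is $n^{O_m(\log s)} = 2^{O_m(\log s\log n)}$. The identity $n^{k} = n^{O_m(\log s)}$ is false because $k$ already carries a factor $\log n$.

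The missing idea is that flipped coordinates must be counted by their reduced-cost rank, not chosen freely. The paper does this with dyadic buckets:
\begin{itemize}
\item the bucket $J_\ell$ (columns at distance about $2^\ell\log n/n$ from $H$) has only $O_m(2^\ell\log n)$ elements;
\item at most $C/2^\ell$ of them can be flipped, where $C = \frac{n}{\log n}\cdot\frac{\Delta}{s} = O_m(\log s\log n)$;
\item $|\Omega| \le 2^{|J_{\mathrm{rem}}|}\prod_\ell\binom{|J_\ell|}{\le C/2^\ell}$, which multiplies out to $n^{O_m(\log s)}$ (Lemma~\ref{lemma:count} and the proof of Lemma~\ref{lemma:omega}).
\end{itemize}
You can also repair this inside your own framework. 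The ranks of the flipped coordinates, shifted down by $O_m(\log n)$, form a set of distinct positive integers with sum at most $N = O_m(\log s\log^2 n)$. There are only $e^{O(\sqrt{N})}$ such sets.

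\textbf{The passage from against-sign sets to nodes fails.} This Branch-and-Price may branch on any unfixed first-stage variable, fractional or not. So a surviving path can contain up to $n$ cost-free along-sign fixings. The depth is therefore not ``effectively $k$ plus $m$'', and many distinct nodes share the same against-sign set. You need an injectivity argument, as in Lemma~\ref{lemma:mappingOmega}:
\begin{itemize}
\item map each internal node $N$ to the binary maximizer $\xv^{(0)}_N$ of the linear Lagrangian value $\Gamma$, subject to the fixings at $N$;
\item two internal nodes not on a common root-to-leaf path differ in the variable branched on at their common ancestor;
\item hence each point of $\Omega$ has at most $n$ preimages, and the tree has at most $2n|\Omega|+1$ nodes.
\end{itemize}
This mapping also handles zero-reduced-cost coordinates, where ``against the sign'' is ambiguous.

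Two smaller points. First, the fact that every explored node has bound at least $\OPT$ comes from the best-bound rule (Lemma~\ref{lemma:bestBound}). Pruning against an incumbent that is only within the gap of $\OPT$ does not give it, since incumbents appear only at fully fixed nodes. Second, your $\epsilon$-net needs a bound on the root-LP dual $\overline{\mu}_*$. Section~\ref{sec:bound_dual} bounds $\mu_\diamond$ for the shifted right-hand side instead; adapting it adds failure probability beyond the $\delta + \frac{1}{n}$ budget. The paper needs neither a dual bound nor $\beta \ge \frac14$ here. The distance from $(c^{(0)}_j,\av^{(0)}_j)$ to the hyperplane $H$ with normal $(1,-\overline{\mu}_*)$ lower-bounds the reduced cost, and Lemma~\ref{lemma:uniformSlab} controls slab counts uniformly over all normals.
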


\begin{thm}\label{thm:inform_int_gap}
In the stochastic-input model, for sufficiently large $n$, with probability at
least $0.9999$, the integrality gap of~\eqref{eq:IP} satisfies
$O_m\!\left( \frac{s \log s \log^2 n}{n} \right).$
\end{thm}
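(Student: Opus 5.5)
The plan is to bound the additive gap $\LP-\OPT$ through LP duality, following the single-scenario argument of Dyer--Frieze and Dey--Dubey--Molinaro. The extra work is to make every probabilistic estimate hold simultaneously over all $s$ scenarios, and this is where the $\log s$ comes from.

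Let $\xv^*$ be an optimal basic solution of the LP relaxation of \eqref{eq:IP}, and let $\bm u=(\bm u^{(i)})_{i\in[s]}$, with $\bm u^{(i)}\in\R^m_{\ge 0}$, be an optimal dual solution for the coupling constraints \eqref{eq:sub}. Define reduced costs
\[
\bar c^{(0)}_j = s\,c^{(0)}_j-\sum_{i\in[s]}\iprod{\bm u^{(i)}}{A^{(0)}_j}, \qquad \bar c^{(i)}_j=c^{(i)}_j-\iprod{\bm u^{(i)}}{A^{(i)}_j}.
\]
By complementary slackness, every binary $\xv$ satisfying \eqref{eq:sub} obeys
\[
\LP-\textup{val}(\xv)=\sum_{i\in\{0\}\cup[s]}\sum_j |\bar c^{(i)}_j|\,|x^{(i)}_j-x^{*(i)}_j|+\sum_{i\in[s]}\iprod{\bm u^{(i)}}{\bv^{(i)}-A^{(0)}\xv^{(0)}-A^{(i)}\xv^{(i)}}.
\]
It therefore suffices to exhibit one feasible binary $\xv$ for which, in every scenario $i$, both the reduced-cost mismatch and the $\bm u^{(i)}$-weighted slack are $O_m(\log s\log^2 n/n)$. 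Summing over the $s$ scenarios then gives the stated $O_m(s\log s\log^2 n/n)$.

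\textbf{Step 1 (bounded duals).} Show that with probability $0.9999$ we have $\|\bm u^{(i)}\|_\infty=O_m(1)$ for all $i$. The idea is that $\beta^{(i)}_r\ge 1/4$ forces each scenario polytope to contain enough mass that a large dual would make the dual objective exceed the trivial primal bound $O(sn)$. This is the step that uses the lower bound on $\beta$ (the paper's Section~\ref{sec:bound_dual}).

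\textbf{Step 2 (construct $\xv$).} A basic solution has at most $ms$ fractional coordinates in total. Round every fractional coordinate down, in both stages. Rounding down only creates slack, so $\xv$ stays feasible, and fractional coordinates have $\bar c=0$, so the mismatch term is unchanged. The only loss is slack, and in scenario $i$ it appears in the $m$ rows.

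\textbf{Step 3 (repair the slack inside each scenario).} Repair each scenario using only its own second-stage variables. Set $\varepsilon=C\log(ns)/n$. Flip second-stage items with $|\bar c^{(i)}_j|\le\varepsilon$: add items that are currently $0$ and remove items that are currently $1$. Their columns $A^{(i)}_j$ act as nearly random vectors, and there are $\Theta(n\varepsilon)=\Theta(\log(ns))$ of them. A greedy-plus-subset-sum or discrepancy argument then brings every row's slack down to $O(\log n/n)$ while keeping it nonnegative, using $O_m(\log n)$ flips. Each flip costs at most $\varepsilon$, so the mismatch added is $O_m(\log n\cdot\log(ns)/n)$. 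The residual slack, weighted by $\bm u^{(i)}=O_m(1)$, is smaller than this. This yields the $\log s\log^2 n/n$ per scenario.

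\textbf{Main obstacle.} The hardest part is that $\bm u$ depends on the random data, so the claim ``many columns have tiny reduced cost and they are rich enough for subset-sum repair'' cannot be applied to a fixed $\bm u^{(i)}$. I would first handle this with a net. Take a $1/n^2$-net of the box $[0,O_m(1)]^m$, which has $n^{O(m)}$ points. For each net point and each scenario, prove the repair property with failure probability $n^{-\Omega(m)}/s$ by Chernoff bounds on the $n$ independent columns. Then union bound over the $n^{O(m)}\cdot s$ events; this is exactly what forces $\varepsilon\sim\log(ns)/n$. A second difficulty is that each scenario's reduced cost $\bar c^{(i)}_j$ depends only on $\bm u^{(i)}$, but scenario $i$'s slack also depends on the rounding of the first-stage columns. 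I would handle this by treating the first-stage contribution as an arbitrary fixed shift of size $O(m)$ in the right-hand side, and requiring the net statement to hold uniformly over such shifts. Discretizing the shifts adds only another $n^{O(m)}$ factor to the union bound.
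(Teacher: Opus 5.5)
The gap is in Step~3, together with your ``main obstacle'' paragraph. The union bound runs over $s\cdot n^{O(m)}$ net points and discretized shifts. For it to work, the repair property must fail with probability at most $n^{-\Omega(m)}/s$ for each fixed dual and target, and you attribute this to Chernoff bounds. Chernoff only controls how many columns fall in the window $|\bar c^{(i)}_j|\le\varepsilon$. Whether some $O_m(\log n)$ of them sum into an $m$-dimensional target box of width $O(\log n/n)$ is a subset-sum/discrepancy event. The tool available, Lemma~\ref{lem_discrepancy}, succeeds only with constant probability $2^{-(m+2)}$ per group of $2m\log n$ fresh columns. Your window holds $\Theta(\log(ns))$ columns, i.e.\ only $\Theta(\log(ns)/(m\log n))$ disjoint groups. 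Boosting to failure probability $n^{-\Omega(m)}/s$ needs $\Omega_m(\log(ns))$ groups. So the per-event failure probability is at best $e^{-O(\log(ns)/\log n)}$, nowhere near what the net argument requires. Enlarging the window to $\varepsilon=\Theta_m(\log n\log(ns)/n)$ fixes the count. But then the $m\log n$ flips cost $O_m(\log^2 n\log(ns)/n)$ per scenario, which exceeds the claimed $O_m(\log s\log^2 n/n)$ whenever $\log s=o(\log n)$. Salvaging your route would require a multidimensional random subset-sum lemma with failure probability $e^{-\Omega(N)}$ in the number $N$ of window columns, uniformly over targets. Nothing in the proposal supplies this.

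The paper avoids any union bound over duals in the rounding step. Lemma~\ref{lem:cond_indp} shows that conditioning on $\cN_0$ and on the columns outside $\cN_0$ fixes $\mu_\diamond$ almost surely. Meanwhile the zero columns stay independent and uniform on $\nrc^{(i)}_{\mu^{(i)}_\diamond}$. Each block then only needs failure probability $\pi^f$, and the union bound over the $s+1$ blocks forces $f=\Theta(\log s)$; this is exactly where $\log s\log^2 n/n$ comes from. The net is used only in Lemma~\ref{lemma:numZeros}, for the Chernoff-type event that $\cN^{(i)}_0$ is large, where failure $(2mn)^{-(m+2)}$ really is available. Because the conditioning leaves randomness only in the zero columns, the repair must be add-only; your removals of $1$-entries are not available. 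This is why the paper solves the LP with $\bv_\diamond=\bv-2d\ones$, $d\approx\tfrac{3\alpha m\log n}{2}$. With that shift, the target $\zv^{(i)}$ sits at $k$ times the column mean with $k=m\log n$, as Lemma~\ref{lem_discrepancy} requires.

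Separately, your Step~1 justification compares against the trivial primal bound $O(sn)$, which only yields $\sum_i\|\bm{u}^{(i)}\|_1=O(s)$. That is the averaged bound the paper explicitly warns is insufficient. The per-scenario bound comes from perturbing $\bm{u}^{(\bar i)}$ alone (Claim~\ref{claim:dualFriendly}). There $\beta_{\min}>\tfrac14$ enters through $\tfrac n2+\tfrac{n}{2\xi}<2n\beta_{\min}$, with $\xi=\tfrac{1}{2\alpha}$.
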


To the best of our knowledge, this work provides the first theoretical
average-case analysis of the Branch-and-Price algorithm, which has been a
standard approach for solving large-scale integer programs since the
1990s~\cite{lubbecke2010column,desaulniers2006column}. Our result helps close the gap between the widespread practical success of Branch-and-Price
and the lack of rigorous guarantees for its typical-case behavior, especially
in high-dimensional and multi-scenario settings.
Moreover, Theorem~\ref{thm:inform_int_gap} extends the classical average-case integrality gap
analysis of~\cite{dyer1989probabilistic} from single-block integer programs to
a much richer multi-block two-stage setting, while requiring only logarithmic
growth in the number of scenarios~$s$ (after normalizing by $\frac{1}{s}$).
This is of independent interest, as it shows that the LP relaxation remains
tight even when the number of scenarios grows, a regime typically regarded as
computationally prohibitive. In particular, it provides theoretical support for
LP-relaxation-based decomposition methods~\cite{angulo2016improving,chen2022generating}
by showing that their underlying relaxations continue to yield high-quality
dual bounds in large-scale two-stage problems with many scenarios.

The remainder of the paper is devoted to proving Theorem~\ref{thm:BB_inf} and is organized as follows.
Section~\ref{sec:BB} presents a detailed description of the Branch-and-Price algorithm. Sections~\ref{sec:number_node} and~\ref{sec:int_gap} prove Lemma~\ref{lem:inform_gap_to_BB} and Theorem~\ref{thm:inform_int_gap}, respectively.
\section{Dual Decomposition and Branch-and-Price Algorithm}
\label{sec:BB}


In this section, we provide a detailed description of a standard Branch-and-Price algorithm \cite{barnhart1998branch,lubbecke2010column,desaulniers2006column}. 
Similar to the standard Branch-and-Bound algorithm, which relies on LP relaxations, Branch-and-Price can be viewed as an enumeration procedure guided by a convex relaxation, where each node solves a dual decomposition problem (also known as column generation, and as ``Lagrangian relaxation'' in the stochastic programming literature). 

Since in later sections we will also consider Lagrangian relaxations of linear programs, we adopt the term \emph{dual decomposition relaxation} here to clearly distinguish between these two types of relaxations.

\paragraph{Dual decomposition relaxation.} This continuous relaxation is obtained by convexifying each combination of the first-stage block and a single scenario block separately (whereas the original problem is equivalent to jointly convexifying the first-stage block together with all scenario blocks). More precisely, it is formally defined as
\begin{equation}
    \label{eq:lagrelax}
    \begin{aligned}
    \LAG := \max_{\xv} \ & s \cdot \iprod{\cv^{(0)}}{\xv^{(0)}} + \sum_{i \in [s]} \iprod{\cv^{(i)}}{\xv^{(i)}} \\
    \text{s.t.}\ & (\xv^{(0)}, \xv^{(i)}) \in \conv(\mathcal{X}^{(i)}), \quad \forall i \in [s],
    \end{aligned}
\end{equation}
where
\[
\mathcal{X}^{(i)} :=
\left\{
(\yv^{(0)}, \yv^{(i)}) \;\middle\vert\;
\begin{array}{@{}l@{}}
A^{(0)} \yv^{(0)} + A^{(i)} \yv^{(i)} \leq \bv^{(i)}, \\
A^{(0)} \yv^{(0)} \leq \bv^{(j)}, \quad \forall j \in [s], \\
\yv^{(0)}, \yv^{(i)} \text{ binary}.
\end{array}
\right\}.
\]

We note that solving such a relaxation can be accomplished via standard column generation by solving multiple integer linear programs (ILPs) whose size is independent of the number of scenarios (more precisely, each has $2n$ variables and $2m$ linear constraints). Additionally, the Branch-and-Price algorithm only needs the optimal objective value of this relaxation, which can also be obtained by performing a Lagrangian relaxation of the nonanticipativity constraints. This latter method, which also uses multiple solves of ILPs whose size is independent of $s$, is usually the one employed in practice. See, for example,~\cite{boland2018combining} for more details. 

\paragraph{Branch-and-Price.} We consider the Branch-and-Price algorithm based on the dual decomposition relaxation. More precisely, we instantiate the elements of the Branch-and-Price framework as follows (see~\cite{lubbecke2010column,desaulniers2006column} for a general description of Branch-and-Price): 

\begin{enumerate}
    \item \textbf{Node relaxation:} In each node $N$ of the Branch-and-Price tree, we use the value $\LAG(N)$ of the dual decomposition relaxation \eqref{eq:lagrelax} associated with that node (i.e., with the appropriate variables $\xv^{(0)}$ fixed to $0/1$).

    \item \textbf{Branching rule:} Branch on any $\xv^{(0)}$ variable that has not yet been fixed at the node; it does not need to be fractional.

    \item \textbf{Node selection rule:} ``Best-bound,'' i.e., select the unpruned node with the largest computed dual value.

    \item \textbf{Finding an incumbent solution:} Whenever we are at a node $N$ where all variables $\xv^{(0)}$ are fixed to $0/1$ (call the fixing $\bar{\xv}^{(0)}$), we compute a solution $\bar{\xv}^{(1)},\ldots,\bar{\xv}^{(s)}$ such that $\bar{\xv}^{(0)}, \bar{\xv}^{(1)},\ldots,\bar{\xv}^{(s)}$ is feasible for the original problem and has value at least $\LAG(N)$.
\end{enumerate}

\section{Size of the Branch-and-Price Tree}
\label{sec:number_node}
In this section, we prove Lemma~\ref{lem:inform_gap_to_BB}, namely, that as long as the integrality gap of problem~\eqref{eq:IP} is 
$O_m\!\left( \frac{s \log s \log^2 n}{n} \right)$ with probability at least $1-\delta$, then with probability at least $1 - \delta - \frac{1}{n}$, the Branch-and-Price algorithm solves~\eqref{eq:IP} after exploring at most $n^{O_m(\log s)}$ nodes. (In Section~\ref{sec:int_gap}, we prove this assumption on the integrality gap, thereby obtaining an unconditional bound on the size of the Branch-and-Price tree.)

The high-level idea for proving this result is as follows:

\begin{enumerate}
    \item We consider the Lagrangian formulation (using an optimal dual solution) of the standard LP relaxation, and show that the primal optimal solution associated with each node in our Branch-and-Price (BP) tree has Lagrangian value at least $\OPT$ (Lemma~\ref{lemma:bestBound}). Intuitively, this holds because the ``best-bound'' rule guarantees that we never explore nodes with dual value below $\OPT$, and the Lagrangian value is at least as high as our dual bound from \eqref{eq:lagrelax}.

    \item We can associate with each node in our BP tree a \textbf{binary} vector with Lagrangian value at least $\OPT$, and at most $n$ internal nodes are associated with the same vector. This implies an upper bound on the size of our BP tree based on the number of such binary vectors with high Lagrangian value (Lemma~\ref{lemma:mappingOmega}).

    This is accomplished by appropriately rounding the primal optimal solution at each node, using the fact that the Lagrangian value is linear once the Lagrangian dual has been fixed.

    \item We prove that, with high probability, there are only $n^{O_m(\log s)}$ binary vectors with such high Lagrangian value, as long as the integrality gap is small.
\end{enumerate}

While the overall argument is similar to that of~\cite{dey2021branch}, the key
difference is that the original bound is exponential in the number of
constraints. In our setting, this number is $m \cdot s$, which would lead to an
exponential dependence on the number of scenarios. By contrast, our bound
scales as $n^{O_m(\log s)}$.
To obtain this improved dependence on $s$, we work only with the projection onto
the first-stage variables $\xv^{(0)}$, that is, we essentially restrict
attention to the matrix $A^{(0)}$, which has only $m$ rows. This approach
exploits the fact that the relaxation~\eqref{eq:lagrelax} convexifies each
scenario jointly with the first-stage variables, and is therefore stronger than
the LP relaxation considered in~\cite{dey2021branch}.

\bigskip

To formalize this argument, consider the standard LP relaxation of the full problem:
\begin{subequations}
\label{prob_lp_relax}
\begin{align}
\OPT_{\text{LP}} := \max & s \cdot \iprod{\cv^{(0)}}{\xv^{(0)}} + \sum_{i \in [s]} \iprod{\cv^{(i)}}{\xv^{(i)}} \\
\label{eq:sub_LP_relax}
\text{s.t.}\ & A^{(0)} \xv^{(0)} + A^{(i)} \xv^{(i)} \leq \bv^{(i)},\quad \forall i \in [s], \\
& \textbf{0} \leq \xv \leq \textbf{1}. 
\end{align}
\end{subequations}

Define the integrality gap by
$$\Delta := \OPT_{\text{LP}} - \OPT.$$
Consider the Lagrangian relaxation of this linear program:
\begin{align}
   \OPT_{\text{LP}} = & \min_{\mu \geq 0} \max_{0 \leq \xv \leq 1} \left(s \cdot \iprod{\cv^{(0)}}{\xv^{(0)}} + \sum_{r \in [s]} \iprod{\cv^{(r)}}{\xv^{(r)}}\right) + \left( \sum_{r \in [s]} \iprod{\mu^{(r)}}{\bv^{(r)} - A^{(0)} \xv^{(0)} - A^{(r)} \xv^{(r)}} \right) \label{eq:lag} \\
    = & \min_{\mu \geq 0} \max_{0 \leq \xv \leq 1} s \cdot \iprod{\cv^{(0)} - (A^{(0)})^{\top} \bigg(\frac{1}{s} \sum_r \mu^{(r)}\bigg)}{\xv^{(0)}} + \sum_{r \in [s]} \iprod{\cv^{(r)} - (A^{(r)})^{\top} \mu^{(r)}}{\xv^{(r)}} + \sum_{r \in [s]} \iprod{\mu^{(r)}}{\bv^{(r)}}. \notag
\end{align}

Let $(\xv_*,\mu_*)$ be the saddle point of this Lagrangian. To simplify the notation, let $\overline{\mu}_* := \frac{1}{s} \sum_{i \in [s]} \mu^{(i)}_*$. Given any first-stage point $\xv^{(0)}$, let
$$\Gamma(\xv^{(0)}) := s \cdot \iprod{\cv^{(0)} - (A^{(0)})^{\top} \overline{\mu}_*}{\xv^{(0)}} + \underbrace{\sum_{r \in [s]} \iprod{\cv^{(r)} - (A^{(r)})^{\top} \mu_*^{(r)}}{\xv^{(r)}_*} + \sum_{r \in [s]} \iprod{\mu_*^{(r)}}{\bv^{(r)}}}_{\text{constant}}$$
be the ``Lagrangian value'' of this point; note that we are using the optimal dual $\mu_*$ and also the optimal primal $\xv^{(r)}_*$ for all the scenarios. Define the set of first-stage binary solutions with $\Gamma$-value at least $\OPT$ as (recall that $\OPT$ is the integer optimum)
\begin{align*}
    \Omega := \{ \xv^{(0)} \in \{0,1\}^n : \Gamma(\xv^{(0)}) \ge \OPT\}.
\end{align*}

The first claim is that for every node in the Branch-and-Price tree, the primal optimal solution for the relaxation at that node has $\Gamma$-value at least $\OPT$.

\begin{lemma} \label{lemma:bestBound}
    Consider any internal node in the Branch-and-Price tree, and let $\bar{\xv}^{(0)},\bar{\xv}^{(1)},\ldots,\bar{\xv}^{(s)}$ be the optimal primal solution of the relaxation \eqref{eq:lagrelax} at this node. Then $\Gamma(\bar{\xv}^{(0)}) \ge \OPT$.
\end{lemma}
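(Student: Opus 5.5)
The plan is to chain two inequalities: $\OPT \le \LAG(N) \le \Gamma(\bar{\xv}^{(0)})$ for every internal node $N$. The first comes from the best-bound node selection rule. The second comes from weak Lagrangian duality applied to the relaxation at $N$, using the fixed dual $\mu_*$.

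\textbf{Step 1: $\LAG(N) \ge \OPT$.} An internal node is one that was selected and branched on. With best-bound selection, at the moment $N$ is selected its value $\LAG(N)$ is the largest among all unpruned open nodes. Some open node always contains an optimal integer solution $\xv^*$, namely the one whose fixings agree with $\xv^{*(0)}$. Its relaxation value is at least $\OPT$, because $\xv^*$ is feasible for \eqref{eq:lagrelax} at that node: $(\xv^{*(0)},\xv^{*(i)}) \in \mathcal{X}^{(i)}$, and the constraints $A^{(0)}\xv^{*(0)} \le \bv^{(j)}$ follow from $A^{(i)} \ge 0$ and $\xv^{*(i)} \ge 0$.

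We must also handle the case where the algorithm terminates as soon as the incumbent matches the best bound. In that case a node with $\LAG(N) < \OPT$ is never expanded. Indeed, the incumbent found at a leaf has value at least that leaf's $\LAG$, which is at least $\OPT$, so such a node would be pruned. Hence every internal node satisfies $\LAG(N) \ge \OPT$. This step needs some care with the termination and pruning conventions, but it is routine.

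\textbf{Step 2: $\Gamma(\bar{\xv}^{(0)}) \ge \LAG(N)$.} Let $(\bar{\xv}^{(0)},\ldots,\bar{\xv}^{(s)})$ be optimal for \eqref{eq:lagrelax} at $N$. Each $(\bar{\xv}^{(0)},\bar{\xv}^{(i)})$ lies in $\conv(\mathcal{X}^{(i)})$, which is contained in $\{A^{(0)}\yv^{(0)}+A^{(i)}\yv^{(i)} \le \bv^{(i)},\ 0\le \yv \le 1\}$. So the solution is feasible for the LP \eqref{prob_lp_relax}, and since $\mu_* \ge 0$ each term $\iprod{\mu_*^{(r)}}{\bv^{(r)} - A^{(0)}\bar{\xv}^{(0)} - A^{(r)}\bar{\xv}^{(r)}}$ is nonnegative. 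Therefore
\[
\LAG(N) \le s\iprod{\cv^{(0)} - (A^{(0)})^\top\overline{\mu}_*}{\bar{\xv}^{(0)}} + \sum_{r}\iprod{\cv^{(r)}-(A^{(r)})^\top\mu_*^{(r)}}{\bar{\xv}^{(r)}} + \sum_r \iprod{\mu_*^{(r)}}{\bv^{(r)}}.
\]

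\textbf{Step 3: replace $\bar{\xv}^{(r)}$ by $\xv_*^{(r)}$.} The key observation is that the Lagrangian separates across blocks and $\xv_*$ is a maximizer of the inner problem at $\mu_*$. This is the saddle-point property: $\xv_*$ maximizes $L(\cdot,\mu_*)$ over the box. Because the box constraints are separable, each $\xv_*^{(r)}$ maximizes $\iprod{\cv^{(r)} - (A^{(r)})^\top\mu_*^{(r)}}{\cdot}$ over $[0,1]^n$. Hence
\[
\iprod{\cv^{(r)}-(A^{(r)})^\top\mu_*^{(r)}}{\bar{\xv}^{(r)}} \le \iprod{\cv^{(r)}-(A^{(r)})^\top\mu_*^{(r)}}{\xv_*^{(r)}}.
\]
Substituting this into the display of Step 2 gives exactly $\Gamma(\bar{\xv}^{(0)})$, and the claim follows.

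The main obstacle is stating the saddle-point property precisely. We must ensure $\xv_*$ is chosen as a maximizer of the Lagrangian at $\mu_*$, and not merely as an LP optimum. Any LP optimum paired with an optimal dual is such a maximizer, by complementary slackness.
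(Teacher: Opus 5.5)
Your proposal is correct and follows essentially the same route as the paper: the best-bound/pruning argument gives $\LAG(N) \ge \OPT$ for every internal node, and then $\Gamma(\bar{\xv}^{(0)})$ dominates the relaxation value because each $\xv_*^{(r)}$ maximizes its reduced-cost term over $[0,1]^n$, while the dualized slack $\sum_r \iprod{\mu_*^{(r)}}{\bv^{(r)} - A^{(0)}\bar{\xv}^{(0)} - A^{(r)}\bar{\xv}^{(r)}}$ is nonnegative by $\mu_* \ge 0$ and LP-feasibility of $\bar{\xv}$. The differences are cosmetic: you add the slack term before swapping $\bar{\xv}^{(r)}$ for $\xv_*^{(r)}$, whereas the paper does it in the opposite order. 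You also make explicit two points the paper leaves implicit, namely that $\conv(\mathcal{X}^{(i)})$ lies in the LP polytope and that the optimal integer solution is feasible for the node relaxation because $A^{(j)} \ge 0$.
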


\begin{proof}
Let $\LAG(N)$ denote the exact optimal value of the relaxation
\eqref{eq:lagrelax} associated with node $N$ in the BP tree.
During the execution of BP, the value $\LAG(N)$ is used as the
node-selection criterion under the best-bound rule.

We claim that every internal node $N$ explored by BP satisfies
$\LAG(N) \ge \OPT.$
By contradiction, suppose there exists a node $N$ with
$\LAG(N) < \OPT$ that is selected for branching. By the best-bound
rule, this implies that for every other open node $N'$ in the BP tree, $\LAG(N') \le \LAG(N) < \OPT$.
On the other hand, there exists at least one node $N^*$ (open or pruned) whose
relaxation contains the globally optimal integral solution of the original
problem. For such a node, we have
$\OPT \le \LAG(N^*).$
Therefore, $N^*$ cannot be an open node at the time $N$ is selected, since all
open nodes have $\LAG(\cdot) < \OPT$. Hence, $N^*$ must already be
pruned.
If $N^*$ was pruned by integrality, then an incumbent solution of value $\OPT$
has already been found. If $N^*$ was pruned by bounds, then an incumbent of
value at least $\OPT$ must have been identified prior to pruning. In either
case, the current incumbent value is $\OPT$.
However, since $\LAG(N) < \OPT$, node $N$ would be pruned by the
bounding rule and thus could not be selected for branching, contradicting our
assumption. We conclude that every internal node of the BP tree satisfies
$\LAG(N) \ge \OPT$.

To conclude the proof, we show that for any node $N$ in the BP tree, the optimal
primal solution
$\bar{\xv}^{(0)}, \bar{\xv}^{(1)}, \ldots, \bar{\xv}^{(s)}$
of its relaxation~\eqref{eq:lagrelax} has $\Gamma$-value at least $\LAG(N)$, namely,
\begin{align}
\Gamma(\bar{\xv}^{(0)})
\;\ge\;
s \cdot \iprod{\cv^{(0)}}{\bar{\xv}^{(0)}}
+ \sum_{r \in [s]} \iprod{\cv^{(r)}}{\bar{\xv}^{(r)}} .
\label{eq:gammaBB}
\end{align}
To prove this, we expand the definition of $\Gamma$ and use the
fact that $\xv_*^{(r)}$ maximizes
$\iprod{\cv^{(r)} - (A^{(r)})^{\top} \mu_*^{(r)}}{\xv^{(r)}}$ to obtain
\begin{align}
\Gamma(\bar{\xv}^{(0)})
&\ge
s \cdot \iprod{\cv^{(0)} - (A^{(0)})^{\top} \overline{\mu}_*}{\bar{\xv}^{(0)}}
+ \sum_{r \in [s]} \iprod{\cv^{(r)} - (A^{(r)})^{\top} \mu_*^{(r)}}{\bar{\xv}^{(r)}}
+ \sum_{r \in [s]} \iprod{\mu_*^{(r)}}{\bv^{(r)}} \notag\\[4pt]
&=
s \cdot \iprod{\cv^{(0)}}{\bar{\xv}^{(0)}}
+ \sum_{r \in [s]} \iprod{\cv^{(r)}}{\bar{\xv}^{(r)}}
- \sum_{r \in [s]} \iprod{\mu_*^{(r)}}{A^{(0)} \bar{\xv}^{(0)} + A^{(r)} \bar{\xv}^{(r)}}
+ \sum_{r \in [s]} \iprod{\mu_*^{(r)}}{\bv^{(r)}}. \label{eq:gammaNode}
\end{align}
Since the solution
$\bar{\xv}^{(0)}, \bar{\xv}^{(1)}, \ldots, \bar{\xv}^{(s)}$
is feasible for the original constraints, i.e.,
\[
A^{(0)} \bar{\xv}^{(0)} + A^{(r)} \bar{\xv}^{(r)} \le \bv^{(r)}
\quad \text{for all } r \in [s],
\]
and $\mu_*^{(r)} \ge 0$, the last two terms of \eqref{eq:gammaNode} sum to a nonnegative value. Therefore,
\[
\Gamma(\bar{\xv}^{(0)})
\ge
s \cdot \iprod{\cv^{(0)}}{\bar{\xv}^{(0)}}
+ \sum_{r \in [s]} \iprod{\cv^{(r)}}{\bar{\xv}^{(r)}},
\]
which proves~\eqref{eq:gammaBB}. This completes the proof of the lemma.
\end{proof}

Given this result, the point $\bar{\xv}^{(0)}$ in the lemma would belong to the set $\Omega$ if $\bar{\xv}^{(0)}$ had only $0/1$ coordinates. However, this point is only an optimal solution of the relaxation \eqref{eq:lagrelax} (at some node) and thus may have fractional coordinates. Since $\Gamma(\cdot)$ is a linear function, we can round this point to a $0/1$ point with at least as large a $\Gamma$-value. More precisely, for a node $N$ of our BP tree, define $\xv^{(0)}_N$ as the argmax
\begin{align*}
   \xv^{(0)}_N := \argmax\Big\{ \Gamma(\xv^{(0)}) \,:\, \xv^{(0)} \in \{0,1\}^n \textrm{ and $\xv^{(0)}$ satisfies the variable fixings of the BP tree at $N$} \Big\}\,.
\end{align*}

We now have that these solutions $\xv^{(0)}_N$ belong to $\Omega$, and using this we can bound the size of our BP tree by the size of $\Omega$.

\begin{lemma} \label{lemma:mappingOmega}
    For every internal node $N$ in the BP tree, $\xv_N^{(0)} \in \Omega$. Moreover, for each $\xv^{(0)} \in \Omega$, there are at most $n$ internal nodes $N$ in the BP tree such that $\xv_N^{(0)} = \xv^{(0)}$. In particular, the size of the BP tree is at most $2n \cdot |\Omega| + 1$.
\end{lemma}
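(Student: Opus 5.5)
The proof has three parts: membership in $\Omega$, a bound on how many internal nodes share the same rounded point, and the final counting.

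\emph{Membership in $\Omega$.} Fix an internal node $N$ and let $\bar{\xv}^{(0)}$ be the first-stage part of the optimal primal solution of \eqref{eq:lagrelax} at $N$. This point lies in the box $[0,1]^n$ and respects the fixings at $N$: the fixed coordinates are $0/1$ in every point of $\conv(\mathcal{X}^{(i)})$ restricted to the node.

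The function $\Gamma$ is affine in $\xv^{(0)}$. Its maximum over the face of $[0,1]^n$ defined by the fixings is therefore attained at a vertex, i.e.\ at a $0/1$ point satisfying the fixings. Coordinatewise, one sets each free coordinate to $1$ iff its coefficient $s(\cv^{(0)} - (A^{(0)})^\top\overline{\mu}_*)_j$ is positive.

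Hence $\Gamma(\xv^{(0)}_N) \ge \Gamma(\bar{\xv}^{(0)}) \ge \OPT$ by Lemma~\ref{lemma:bestBound}, so $\xv^{(0)}_N \in \Omega$. To make $\xv^{(0)}_N$ well defined, I fix a tie-breaking rule: a free coordinate with zero coefficient is set to $0$.

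\emph{At most $n$ internal nodes per point.} I claim that all internal nodes $N$ with $\xv^{(0)}_N = \xv^{(0)}$ lie on a single root-to-leaf path. Take two such nodes $N, N'$, neither an ancestor of the other, and let $M$ be their deepest common ancestor. Then $M$ branches on some variable $j$, and $N, N'$ lie in different children, so $N$ fixes $x_j = 0$ and $N'$ fixes $x_j = 1$ (or vice versa). Since $\xv^{(0)}_N$ satisfies the fixings of $N$ and $\xv^{(0)}_{N'}$ those of $N'$, the two points differ in coordinate $j$, a contradiction. So the nodes are pairwise comparable, i.e.\ they lie on one path.

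Along any path, each internal node branches on a distinct, not-yet-fixed first-stage variable. A path therefore contains at most $n$ internal nodes (depths $0,\dots,n-1$); a node at depth $n$ has everything fixed and is a leaf. This gives the bound of $n$.

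\emph{Counting.} The previous two steps give at most $n|\Omega|$ internal nodes. The BP tree is binary, with each internal node having exactly two children, so the number of leaves equals the number of internal nodes plus one. The total size is therefore at most $2n|\Omega| + 1$.

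The main point to check is the branching rule. The paper branches only on unfixed $\xv^{(0)}$ variables, which guarantees the sibling subtrees impose contradictory fixings and caps the depth at $n$. I also need that membership in $\Omega$ holds at every internal node, not only at the nodes where $\bar{\xv}^{(0)}$ happens to be defined. This is exactly what Lemma~\ref{lemma:bestBound} supplies, so the argument is essentially bookkeeping once tie-breaking is fixed.
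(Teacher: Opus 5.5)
Your proposal is correct and follows the paper's proof step by step. You get membership in $\Omega$ from Lemma~\ref{lemma:bestBound} plus linearity of $\Gamma$, use the common-ancestor argument to put all internal nodes with the same $\xv_N^{(0)}$ on one path with at most $n$ internal nodes, and finish with the full-binary-tree count. Your explicit vertex argument, maximizing the affine $\Gamma$ over the face of $[0,1]^n$ cut out by the fixings, makes precise a step the paper states loosely, since the possibly fractional $\bar{\xv}^{(0)}$ is not literally feasible for the binary argmax.
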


\begin{proof}
For the first part: for an internal node $N$ in the BP tree, the solution $\bar{\xv}^{(0)}$ from Lemma~\ref{lemma:bestBound} is feasible for the argmax defining $\xv_N^{(0)}$, which implies $\Gamma(\xv_N^{(0)}) \ge \Gamma(\bar{\xv}^{(0)}) \ge \OPT$, and so $\xv_N^{(0)} \in \Omega$.

For the second property: if two internal nodes $N_1$ and $N_2$ are not on the same path of the BP tree, then by looking at the common ancestor of these nodes we observe that there is some coordinate $\xv^{(0)}_j$ that is fixed to $0$ in $N_1$ and $1$ in $N_2$, or vice versa; this implies that their points $\xv_{N_1}^{(0)}$ and $\xv_{N_2}^{(0)}$ are different (in coordinate $j$). Thus, all the internal nodes $N$ whose solutions $\xv_{N}^{(0)}$ map to the same point $\xv^{(0)}$ of $\Omega$ must lie along a single path, and thus there are at most $n$ of them (recall that our Branch-and-Price algorithm only branches on the $n$ first-stage variables $\xv^{(0)}$).

This implies that there are at most $n \cdot |\Omega|$ internal nodes in the BP tree, which further implies at most $2n \cdot |\Omega| + 1$ total nodes (since it is a binary tree). This concludes the proof.
\end{proof}

In light of this result, in order to bound the size of the BP tree, it suffices to bound the number of points in $\Omega$.

\begin{lemma} \label{lemma:omega}
    Suppose that with probability at least $1 - \delta$, $\Delta \le O_m(\frac{s \log s \log^2 n}{n})$. Then with probability at least $1 - \frac{1}{n} - \delta$, we have $|\Omega| \le n^{O_m(\log s)}$.
\end{lemma}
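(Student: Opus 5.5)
The plan is to rewrite membership in $\Omega$ as a knapsack-type condition on the reduced costs of the first-stage variables, and then count the binary vectors satisfying it using the randomness of $\cv^{(0)}$ and $A^{(0)}$.

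\textbf{Step 1 (reduction to reduced costs).} Write $\tilde{\cv} := \cv^{(0)} - (A^{(0)})^{\top}\overline{\mu}_*$. Since $(\xv_*,\mu_*)$ is a saddle point, $\xv^{(0)}_*$ maximizes $\iprod{\tilde{\cv}}{\cdot}$ over $[0,1]^n$. Hence $\xv^{(0)}_{*,j}=1$ if $\tilde c_j>0$ and $\xv^{(0)}_{*,j}=0$ if $\tilde c_j<0$, and $\Gamma(\xv^{(0)}_*)=\OPT_{\LP}$. By linearity of $\Gamma$, every binary $\xv^{(0)}$ satisfies
\[
\Gamma(\xv^{(0)}) \;\le\; \OPT_{\LP} - s\sum_{j\in D(\xv^{(0)})}|\tilde c_j|,
\]
where $D(\xv^{(0)})$ is the set of coordinates where $\xv^{(0)}$ disagrees with the sign pattern of $\tilde{\cv}$. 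Coordinates with $\tilde c_j=0$ have probability zero, and fractional coordinates of $\xv_*^{(0)}$ only occur there, so we may ignore both. Since $\xv^{(0)}$ is determined by $D(\xv^{(0)})$, we get
\[
|\Omega| \;\le\; \#\Big\{D\subseteq[n] : \sum_{j\in D}|\tilde c_j|\le \varepsilon\Big\},\qquad \varepsilon:=\Delta/s .
\]
On the event of probability $1-\delta$ from the hypothesis, $\varepsilon \le C_m\log s\log^2 n/n$.

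\textbf{Step 2 (uniform anti-concentration of reduced costs; main obstacle).} The difficulty is that $\overline{\mu}_*$ depends on the data, so the $\tilde c_j$ are not independent. I would prove a statement uniform over $\mu$ in a bounded set $M\subseteq\R^m_{\ge0}$, using the fact (presumably shown in Section~\ref{sec:bound_dual}, which is where $\beta\ge 1/4$ is used) that $\overline{\mu}_*$ has norm $O_m(1)$ with high probability. The target statement is: with probability at least $1-1/n$, for all $\mu\in M$ and all $t\ge 1$,
\[
\#\{j : |c^{(0)}_j-\iprod{A^{(0)}_j}{\mu}|\le t/n\}\le C_m(t+\log n).
\]
For a fixed $\mu$ and $t$, each indicator has probability at most $2t/n$, because $c^{(0)}_j$ is uniform and independent of the column $A^{(0)}_j$. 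The sets $\{(c,a): |c-\iprod{a}{\mu}|\le \tau\}$ are slabs in $\R^{m+1}$, a class of VC dimension $O(m)$. A relative (multiplicative) VC/Chernoff bound, or alternatively a net over $M$ combined with Lipschitzness in $\mu$, then gives the uniform bound with failure probability $n^{-\Omega(1)}$; a union bound over $t\in\{1,\dots,n\}$ finishes this step.

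\textbf{Step 3 (counting).} Sort the values $v_1\le v_2\le\dots$ of $|\tilde c_j|$. By Step 2, $v_k\ge (k-C_m\log n)/(C_m n)$. Split the coordinates into two groups.
\begin{itemize}
\item The first $L_0=C_m\log n$ coordinates contribute at most $2^{L_0}=n^{O_m(1)}$ choices.
\item For the remaining coordinates, map each one to the integer level $\lfloor C_m n v_k\rfloor$. By Step 2, each level contains $O_m(\log n)$ coordinates. A subset with total value at most $\varepsilon$ has level-sum at most $N:=C_m n\varepsilon=O_m(\log s\log^2 n)$.
\end{itemize}
For the second group, choose which levels are used: a set of distinct levels with sum at most $N$ has at most $O(\sqrt N)$ elements, and there are $e^{O(\sqrt N)}$ such sets. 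Then choose which coordinates are taken within each used level: at most $2^{O_m(\log n)}$ per level, giving $2^{O_m(\sqrt N\log n)}$ in total. Multiplying everything,
\[
|\Omega|\le \exp\big(O_m(\sqrt{\log s}\,\log^2 n)\big).
\]
This is not yet of the form $n^{O_m(\log s)}$, so the crude within-level bound has to be sharpened. A subset of $k$ coordinates has total value at least $\approx k^2/(2C_m n)$, so $k=O(\sqrt{n\varepsilon}+\log n)$. Moreover, only $O_m(n\varepsilon+\log n)$ coordinates have value at most $\varepsilon$. I would therefore bound the count by $\binom{O_m(n\varepsilon+\log n)}{\le k}$ and combine this with the level structure, aiming to show that the exponent is $O_m(\log s\log n)$. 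This bookkeeping, together with the uniformity in Step 2, is where I expect the real work. Intersecting the events of Steps 1--3 gives $|\Omega|\le n^{O_m(\log s)}$ with probability at least $1-\delta-1/n$.
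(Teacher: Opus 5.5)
Your Steps 1 and 2 are essentially the paper's Lemma~\ref{lemma:dist} and Lemma~\ref{lemma:uniformSlab}/Corollary~\ref{cor:J}. Working with reduced costs $|\tilde c_j|$ instead of Euclidean distances to $H$ is fine. The gap is Step 3, the real content of the lemma, which you do not carry out. The bound you actually derive, $\exp(O_m(\sqrt{\log s}\,\log^2 n))$, is too weak. The suggested repair does not suffice on its own either: with $k=O_m(\sqrt{\log s}\,\log n)$ picks from a pool of $O_m(\log s\log^2 n)$ coordinates, $\binom{O_m(n\varepsilon+\log n)}{\le k}=\exp\big(O_m(\sqrt{\log s}\,\log n\cdot\log(\log s\log n))\big)$, which is not $n^{O_m(\log s)}$ when $\log\log n\gg\sqrt{\log s}$. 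Both attempts misuse the budget. Unit-width levels allow arbitrary subsets inside each used level, and a single binomial ignores that expensive coordinates can only be picked rarely.

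The missing idea is the dyadic bucketing of Lemma~\ref{lemma:count}. Let $J_\ell$ be the coordinates with $|\tilde c_j|\in\big(2^\ell\frac{\log n}{n},2^{\ell+1}\frac{\log n}{n}\big]$, and $J_{\mathrm{rem}}$ those with $|\tilde c_j|\le 2\frac{\log n}{n}$. Your Step 2 with $t=2^{\ell+1}\log n$ gives $|J_\ell|=O_m(2^\ell\log n)$ and $|J_{\mathrm{rem}}|=O_m(\log n)$. Only centered slabs are needed here; by contrast, your claim that every unit-width level contains $O_m(\log n)$ coordinates does not follow from Step 2 as stated. The budget permits at most $C/2^\ell$ picks from $J_\ell$, where $C:=n\varepsilon/\log n=O_m(\log s\log n)$. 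Hence $|\Omega|\le 2^{|J_{\mathrm{rem}}|}\prod_\ell\binom{|J_\ell|}{\le C/2^\ell}$. For $2^{2\ell}\ge c_m\log s$, use $\binom{a}{\le b}\le(4a/b)^b$ after replacing $C$ and $|J_\ell|$ by their upper bounds, which only increases the binomial. These buckets contribute total exponent $\sum_\ell\frac{C}{2^\ell}\,O_m(\ell+1)=O_m(\log s\log n)$. The remaining $O_m(\log\log s)$ buckets, bounded trivially by $2^{|J_\ell|}$, contribute $2^{O_m(\sqrt{\log s}\,\log n)}$ in total. Together this gives $|\Omega|\le n^{O_m(\log s)}$.

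Two smaller corrections. First, do not rely on a bound for $\|\overline{\mu}_*\|$. Section~\ref{sec:bound_dual} bounds $\mu_\diamond$, not $\mu_*$, and only under the extra hypotheses of Theorem~\ref{thm:smallGap}; invoking it would also add a failure probability that the budget $1-\frac{1}{n}-\delta$ does not allow. It is unnecessary anyway: $\Pr\big(|c^{(0)}_j-\iprod{\av^{(0)}_j}{\mu}|\le\tau\big)\le 2\tau$ for every $\mu$, and relative VC bounds need no boundedness of $\mu$. The paper obtains the same uniformity by using the Euclidean distance to $H$, which is at most $|\tilde c_j|$, together with Lemma~\ref{lemma:uniformSlab}, which holds for all unit normals at once. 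Second, $\tilde c_j=0$ is not a null event: fractional first-stage coordinates of $\xv^{(0)}_*$ have zero reduced cost by complementary slackness. Such coordinates lie in $J_{\mathrm{rem}}$, however, and are absorbed by the factor $2^{|J_{\mathrm{rem}}|}$.
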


Lemmas~\ref{lemma:mappingOmega} and~\ref{lemma:omega} thus imply the desired Lemma~\ref{lem:inform_gap_to_BB}.


\subsection{Proof of Lemma \ref{lemma:omega}}



Recall that $(\xv_*,\mu_*)$ is the saddle point of the Lagrangian \eqref{eq:lag}, and that $\overline{\mu}_* = \frac{1}{s} \sum_r \mu_*^{(r)}$.

Recalling the definition of $\Omega$, we would like to bound the number of $0/1$ points $\xv^{(0)}$ such that $\Gamma(\xv^{(0)}) \ge \OPT$. For that, consider the following hyperplane defined by the dual $\bar{\mu}_*$:
$$H := \{y \in \R^{m+1} : \iprod{(1, -\bar{\mu}_*)}{y} = 0\}.$$
The relevance of this hyperplane is that it transfers our task into a geometric one, since, as we show, $\Gamma(\xv^{(0)})$ can be related to the distance from the first-stage columns $(c^{(0)}_j, \av^{(0)}_j)$ to the hyperplane $H$ for the coordinates $j$ where $\xv^{(0)}$ and $\xv_*^{(0)}$ disagree. Let $d(y,H)$ denote the Euclidean distance between a point $y$ and the hyperplane $H$.

\begin{lemma} \label{lemma:dist}
    For every $\xv^{(0)} \in \{0,1\}^n$ we have
    $$\Gamma(\xv^{(0)}) \le \OPT + \Delta - s \cdot \sum_j d\Big(\big(c^{(0)}_j, \av^{(0)}_j\big), H\Big) \cdot \ones(x^{(0)}_j \neq x^{(0)}_{*,j}).$$
    Consequently, if $\xv^{(0)}$ belongs to $\Omega$, then
    $$\sum_j d\Big(\big(c^{(0)}_j, \av^{(0)}_j\big), H\Big) \cdot \ones(x^{(0)}_j \neq x^{(0)}_{*,j}) \,\le\, \frac{\Delta}{s}.$$
\end{lemma}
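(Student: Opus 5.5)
The plan is to use the fact that $\Gamma$ is an affine function of $\xv^{(0)}$ and to expand it around the saddle-point first-stage solution $\xv^{(0)}_*$. Write the reduced costs as
\[
w_j := c^{(0)}_j - \iprod{\overline{\mu}_*}{\av^{(0)}_j} = \iprod{(1,-\overline{\mu}_*)}{(c^{(0)}_j,\av^{(0)}_j)}, \qquad j \in [n].
\]
Then, for every $\xv^{(0)}$,
\[
\Gamma(\xv^{(0)}) = \Gamma(\xv^{(0)}_*) + s \sum_j w_j \big(x^{(0)}_j - x^{(0)}_{*,j}\big).
\]
The proof then has three steps: identify $\Gamma(\xv^{(0)}_*)$, sign the linear correction, and convert the $w_j$ into distances to $H$.

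First, I will show that $\Gamma(\xv^{(0)}_*) = \OPT_{\LP} = \OPT + \Delta$. By definition, $\Gamma(\xv^{(0)}_*)$ is exactly the Lagrangian in \eqref{eq:lag} evaluated at the saddle point $(\xv_*,\mu_*)$. By LP strong duality, and since $(\xv_*,\mu_*)$ is a saddle point, this value is the min-max value $\OPT_{\LP}$. By the definition of $\Delta$, this equals $\OPT + \Delta$.

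Second, I will sign each term of the correction. Because $\xv_*$ maximizes the Lagrangian over the box $[0,1]$ for the fixed dual $\mu_*$, and the Lagrangian separates over coordinates, $x^{(0)}_{*,j}$ maximizes $w_j x$ over $x \in [0,1]$ for each $j$. So $x^{(0)}_{*,j}=1$ whenever $w_j>0$, and $x^{(0)}_{*,j}=0$ whenever $w_j<0$. Now take a binary $\xv^{(0)}$ and a coordinate $j$ where it disagrees with $\xv^{(0)}_*$.
\begin{itemize}
\item If $w_j \neq 0$, then $x^{(0)}_{*,j}$ is the $0/1$ value forced above, and the disagreeing $x^{(0)}_j$ moves away from it. The term $w_j(x^{(0)}_j - x^{(0)}_{*,j})$ then equals $-|w_j|\,|x^{(0)}_j - x^{(0)}_{*,j}|$. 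Since both entries are in $\{0,1\}$ here, this is exactly $-|w_j|$.
\item If $w_j = 0$, the term is $0 = -|w_j|$. This also covers the case where $x^{(0)}_{*,j}$ is fractional, since fractional values can only occur when $w_j=0$.
\end{itemize}
Coordinates where the two vectors agree contribute nothing. Summing gives
\[
\Gamma(\xv^{(0)}) = \OPT+\Delta - s\sum_j |w_j|\,\ones\big(x^{(0)}_j\neq x^{(0)}_{*,j}\big).
\]

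Third, I will use the point-to-hyperplane distance formula: $|w_j| = \norm{(1,-\overline{\mu}_*)}\cdot d\big((c^{(0)}_j,\av^{(0)}_j),H\big)$. Since the first coordinate of the normal vector is $1$, its norm is at least $1$, so $|w_j| \ge d\big((c^{(0)}_j,\av^{(0)}_j),H\big)$. Substituting this into the display above gives the claimed inequality.

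For the consequence, if $\xv^{(0)}\in\Omega$ then $\OPT \le \Gamma(\xv^{(0)})$. Combining this with the inequality and cancelling $\OPT$ yields the bound $\Delta/s$ after dividing by $s$.

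I expect the only delicate step to be the second one, in particular the case where $\xv^{(0)}_*$ has fractional coordinates. The plan handles it by noting that fractional coordinates force $w_j=0$. No probabilistic input is needed for this lemma.
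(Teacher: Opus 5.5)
Your proposal is correct and follows the paper's proof essentially step by step. You expand the affine map $\Gamma$ around $\xv^{(0)}_*$, identify $\Gamma(\xv^{(0)}_*)=\OPT_{\LP}=\OPT+\Delta$ via the saddle point, and use box-optimality of $\xv^{(0)}_*$ to show that each disagreeing coordinate contributes exactly $-s|w_j|$. The only cosmetic difference is the final distance bound: you use the exact point-to-hyperplane formula with $\norm{(1,-\overline{\mu}_*)}_2 \ge 1$, whereas the paper exhibits the point $(\iprod{\overline{\mu}_*}{\av^{(0)}_j},\av^{(0)}_j) \in H$ and measures the distance to it.
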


\begin{proof}
    The ``Consequently'' part follows directly from the definition of $\Omega$, so we only prove the first part of the lemma.

    Since $(\xv_*,\mu_*)$ is a saddle point for the Lagrangian \eqref{eq:lag}, by strong duality it achieves the value of $\OPT_{\LP}$, and so
    \begin{align}
        \OPT_{\LP} - \Gamma(\xv^{(0)}) &= s \cdot \iprod{\cv^{(0)} - (A^{(0)})^{\top} \overline{\mu}_*}{\xv^{(0)}_* - \xv^{(0)}} \notag\\
        &= s \cdot \sum_{j = 1}^n \Big(\cv^{(0)} - (A^{(0)})^{\top} \overline{\mu}_*\Big)_j \cdot (x^{(0)}_{*,j} - x^{(0)}_j). \label{eq:preDist}
    \end{align}
    Moreover, using the fact that $\xv^{(0)}_*$ is a saddle point and $\xv^{(0)}$ is a $0/1$ vector, we claim that for every $j$ we have
    \begin{align}
        \Big(\cv^{(0)} - (A^{(0)})^{\top} \overline{\mu}_*\Big)_j \cdot (x^{(0)}_{*,j} - x^{(0)}_j) \,=\, \Big|\Big(\cv^{(0)} - (A^{(0)})^{\top} \overline{\mu}_*\Big)_j\Big| \cdot \ones(x^0_j \neq x^0_{*,j}). \label{eq:absCol}
    \end{align}
    This is because if $\big(\cv^{(0)} - (A^{(0)})^{\top} \overline{\mu}_*\big)_j > 0$, then $x^{(0)}_{*,j} = 1$ (since it is a saddle point) and so $(x^{(0)}_{*,j} - x^{(0)}_j) = \ones(x^0_j \neq x^0_{*,j})$, whereas if $\big(\cv^{(0)} - (A^{(0)})^{\top} \overline{\mu}_*\big)_j < 0$, then $x^{(0)}_{*,j} = 0$ and $- (x^{(0)}_{*,j} - x^{(0)}_j) = \ones(x^{(0)}_j \neq x^{(0)}_{*,j})$.

    Finally, we claim that
    \begin{align}
        d\Big(\big(c^{(0)}_j, \av^{(0)}_j\big), H\Big) \,\le\, \Big|\Big(\cv^{(0)} - (A^{(0)})^{\top} \overline{\mu}_*\Big)_j\Big|. \label{eq:absCol2}
    \end{align}
    This is because the point $\left(\iprod{\overline{\mu}_*}{\av^{(0)}_j}, \av^{(0)}_j\right)$ belongs to $H$, and so it upper-bounds the distance:
    \begin{align*}
        d\left(\left(c^{(0)}_j, \av^{(0)}_j\right), H\right)
        &\le \Big\|\left(c^{(0)}_j, \av^{(0)}_j\right) - \left(\iprod{\overline{\mu}_*}{\av^{(0)}_j}, \av^{(0)}_j\right)\Big\|_2 \\
        &= \Big|c^{(0)}_j - \iprod{\overline{\mu}_*}{\av^{(0)}_j}\Big|
        = \Big|\big(\cv^{(0)} - (A^{(0)})^{\top} \overline{\mu}_*\big)_j\Big|.
    \end{align*}

    Combining \eqref{eq:absCol} and \eqref{eq:absCol2} and applying them to \eqref{eq:preDist}, we get
    \begin{align*}
        \OPT_{\LP} - \Gamma(\xv^{(0)}) \ge s \cdot \sum_{j = 1}^n d\Big(\big(c^{(0)}_j, \av^{(0)}_j\big), H\Big) \cdot \ones(x^0_j \neq x^0_{*,j}).
    \end{align*}
    Using the definition of the integrality gap $\Delta = \OPT_{\LP} - \OPT$ and rearranging the terms then gives the claim of the lemma.
\end{proof}

Roughly speaking, the previous lemma means that if $\xv^{(0)} \in \Omega$, then the vector $\xv^{(0)}$ and the saddle point $\xv^{(0)}_*$ can differ only on coordinates $j$ for which the column $\big(c^{(0)}_j, \av^{(0)}_j\big)$ is close to the hyperplane $H$. We now show that, with high probability, there are only ``a few'' of these columns; this in turn will imply that there are only ``a few'' possibilities for the vectors in $\Omega$ to differ from $\xv^{(0)}_*$, which upper-bounds the size of $\Omega$ as desired.

To make this precise, we define ``buckets'' of columns based on their distances to $H$: for $\ell \ge 1$ define
\begin{align*}
    J_\ell := \bigg\{j : d\Big(\big(c^{(0)}_j, \av^{(0)}_j\big),H\Big) \textrm{ is in the interval } \Big(\tfrac{\log n}{n} 2^\ell, \tfrac{\log n}{n} 2^{\ell+1} \Big] \bigg\},
\end{align*}
and define $J_{\mathrm{rem}} := [n] \setminus \bigcup_{\ell \ge 1} J_\ell$ as the remaining columns (i.e., those with distance at most $\frac{\log n}{n} \cdot 2$). The next lemma establishes the connection between the number of column elements in these buckets and the size of $\Omega$. The proof is the same as that of Lemma~5 of~\cite{dey2021branch}, but we reproduce it for completeness. We use the notation $\binom{u}{\le v} := \binom{u}{0} + \ldots + \binom{u}{v}$ for positive integers $u \ge v$.

\begin{lemma} \label{lemma:count}
    We have the following upper bound:
    \begin{align*}
        |\Omega| \le 2^{|J_{\mathrm{rem}}|} \cdot \prod_{\ell=1}^{\log C}
        \binom{|J_{\ell}|}{\le C/2^\ell},
    \end{align*}
    where $C := \frac{n}{\log n} \cdot \frac{\Delta}{s}$.
\end{lemma}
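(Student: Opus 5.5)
We prove the bound by encoding each $\xv^{(0)} \in \Omega$ through the set of coordinates where it disagrees with the saddle point $\xv^{(0)}_*$. Given $\xv^{(0)} \in \Omega$, let $D(\xv^{(0)}) := \{j : x^{(0)}_j \neq x^{(0)}_{*,j}\}$. Since $\xv^{(0)}$ is binary and $\xv^{(0)}_*$ is fixed, the map $\xv^{(0)} \mapsto D(\xv^{(0)})$ is injective. Indeed, $\xv^{(0)}$ is recovered by flipping $\xv^{(0)}_*$ on $D$. We round any fractional coordinates of $\xv^{(0)}_*$ arbitrarily but fixed for this purpose; a fractional coordinate always counts as a disagreement, and distinct binary vectors still give distinct sets. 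So $|\Omega|$ is at most the number of possible sets $D(\xv^{(0)})$, and we count those bucket by bucket.

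Split $D = (D \cap J_{\mathrm{rem}}) \cup \bigcup_{\ell \ge 1} (D \cap J_\ell)$. For the part in $J_{\mathrm{rem}}$ we impose no constraint, which contributes at most $2^{|J_{\mathrm{rem}}|}$ choices. For $\ell \ge 1$, every $j \in J_\ell$ has distance greater than $\frac{\log n}{n} 2^\ell$ to $H$. The ``Consequently'' part of Lemma~\ref{lemma:dist} gives
\[
|D \cap J_\ell| \cdot \frac{\log n}{n} 2^\ell \;<\; \sum_{j \in D} d\Big(\big(c^{(0)}_j, \av^{(0)}_j\big), H\Big) \;\le\; \frac{\Delta}{s}.
\]
Hence $|D \cap J_\ell| \le \frac{n}{\log n}\cdot\frac{\Delta}{s} \cdot 2^{-\ell} = C/2^\ell$. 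The number of choices for $D \cap J_\ell$ is therefore at most $\binom{|J_\ell|}{\le C/2^\ell}$, interpreting this as $2^{|J_\ell|}$ if $C/2^\ell \ge |J_\ell|$; either way, the number of subsets of size at most $C/2^\ell$ is bounded by it.

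For $\ell > \log C$ we have $C/2^\ell < 1$. The bound then forces $D \cap J_\ell = \emptyset$, giving exactly one choice, so those factors equal $1$ and the product can be truncated at $\ell = \log C$. If $C < 2$, every bucket $J_\ell$ with $\ell\ge1$ is forced empty and the product is empty. Multiplying the independent choices across $J_{\mathrm{rem}}$ and the buckets gives the stated bound.

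The only delicate point is the bookkeeping: the injectivity of $D$ (including the handling of any fractional coordinates of $\xv^{(0)}_*$), and the boundary conventions when $C/2^\ell$ is non-integral or $\log C$ is non-integral, handled by reading $\le C/2^\ell$ as $\le \lfloor C/2^\ell \rfloor$. Everything else follows directly from Lemma~\ref{lemma:dist}.
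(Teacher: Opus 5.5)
Your proof is correct and takes essentially the same approach as the paper. You encode each point of $\Omega$ by its flips relative to $\xv^{(0)}_*$, then use Lemma~\ref{lemma:dist} bucket by bucket to cap the flips in $J_\ell$ at $C/2^\ell$ (none for $\ell > \log C$), leaving $J_{\mathrm{rem}}$ free. The only difference is bookkeeping: the paper first shows $x^{(0)}_{*,j}\in\{0,1\}$ for $j\in\bigcup_{\ell\ge 1}J_\ell$ via the nonzero reduced cost from \eqref{eq:absCol2}, whereas you fix a binary rounding of $\xv^{(0)}_*$ and note that disagreements with it form a subset of those counted in Lemma~\ref{lemma:dist}, which works just as well.
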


\begin{proof}
    First, observe that for every column $j$ in $\bigcup_{\ell \ge 1} J_\ell$, the coordinate $x^{(0)}_{*,j}$ has value $0$ or $1$: these columns have strictly positive distance to $H$, hence from \eqref{eq:absCol2} they have nonzero reduced costs $\big(\cv^{(0)} - (A^{(0)})^{\top} \overline{\mu}_*\big)_j$, and so the saddle-point solution $x^{(0)}_{*,j}$ is at the boundary of its domain $[0,1]$.

    Next, notice that every point in $\Omega$ can be thought of as being created by starting with the vector $\xv^{(0)}_*$ and then changing some of its coordinates, and because of Lemma~\ref{lemma:dist} we:
\begin{itemize}
    \item cannot change the value of $\xv^{(0)}_*$ in any coordinate $j$ in a bucket $J_\ell$ with $\ell > \log C$;
    \item can only flip the value of $\xv^{(0)}_*$ in at most $\frac{C}{2^\ell}$ of the coordinates $j \in J_\ell$, for each $\ell = 1, \dots, \log C$ (recall that both $x^{(0)}_{*,j}$ and $x^{(0)}_j$ take only the values $0$ or $1$);
    \item may set a new arbitrary $0/1$ value for, in principle, all coordinates in $J_{\mathrm{rem}}$.
\end{itemize}

Since there are at most $2^{|J_{\mathrm{rem}}|} \cdot \prod_{\ell=1}^{\log C} \binom{|J_\ell|}{\leq C/2^\ell}$ possibilities in this process, we have the desired upper bound.
\end{proof}

Since both the columns $(c^{(0)}_j, \av^{(0)}_j)$ and the hyperplane are random objects that depend on the entries of the input problem, the sizes of the sets $J_\ell$ are also random. In order to control these sizes with high probability, we will use the following technical lemma (Theorem~4 of~\cite{dey2021branch}); let $\mathbb{S}^{k-1}$ denote the unit sphere in $\R^k$.

\begin{lemma}[Theorem 4 of~\cite{dey2021branch}] \label{lemma:uniformSlab}
    For $u \in \mathbb{S}^{k-1}$ and $w \ge 0$, define the \emph{slab} of normal $u$ and width $w$ as $$S_{u,w} := \Big\{y \in \R^k : \iprod{u}{y} \in [-w,w] \Big\}.$$ Let $Y^1,\ldots,Y^n$ be independent random vectors uniformly distributed in the cube $[0,1]^k$. For $n \ge k$, then with probability at least $1 - \frac{1}{n}$, we have that for all $u \in \mathbb{S}^{k-1}$ and $w \ge \frac{\log n}{n}$, at most $60 nwk$ of the $Y^j$'s belong to $S_{u,w}$.\footnote{We note that the assumption $n \ge k$ is without loss of generality, since the result is trivially true otherwise: the assumption $w \ge \frac{\log n}{n}$ implies that the final upper bound is $60 n w k \ge 60 \log n \cdot k > k$, making the statement vacuous when $n \le k$.}
\end{lemma}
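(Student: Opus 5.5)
The plan is to prove the statement by a net-and-union-bound argument: first bound the probability that a single uniform point falls into a fixed slab, then discretize the family of all slabs (directions $u$ and widths $w$) and apply a Chernoff bound plus a union bound. Throughout I use that only $w \le \sqrt{k}$ matters. The cube has diameter $\sqrt{k}$, and $\langle u, y\rangle$ ranges over an interval of length at most $\sqrt{k}$ on $[0,1]^k$. So for larger $w$ the claimed bound $60nwk \ge n$ is trivial.

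\emph{Step 1 (single slab).} Fix $u \in \mathbb{S}^{k-1}$ and pick a coordinate $i$ with $|u_i| \ge 1/\sqrt{k}$. Condition on all coordinates of $Y$ other than $Y_i$. Then $\langle u, Y\rangle$ is uniform on an interval of length $|u_i|$, so its density is at most $1/|u_i| \le \sqrt{k}$. Hence $\P(Y \in S_{u,w}) \le 2w\sqrt{k} \le 2wk$, and the expected number of points in $S_{u,w}$ is at most $2nwk$. (Ball's slicing bound would give the sharper density bound $\sqrt{2}$, but the crude one already suffices.)

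\emph{Step 2 (discretization).} Let $\mathcal{N}$ be an $\varepsilon$-net of $\mathbb{S}^{k-1}$ with $\varepsilon := \frac{\log n}{n\sqrt{k}}$, so that $|\mathcal{N}| \le (3/\varepsilon)^k \le (3n\sqrt{k})^k$. Let the widths be the dyadic values $w' = 2^t \frac{\log n}{n}$, with $t = 0,1,\dots,O(\log n + \log k)$. Given any $(u,w)$ with $w \ge \frac{\log n}{n}$:
\begin{itemize}
\item take $u' \in \mathcal{N}$ with $\|u-u'\|_2 \le \varepsilon$;
\item take the dyadic $w'$ with $w \le w' < 2w$.
\end{itemize}
For $y \in [0,1]^k$ we have $|\langle u-u', y\rangle| \le \varepsilon\sqrt{k} = \frac{\log n}{n} \le w$. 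Therefore $S_{u,w} \cap [0,1]^k \subseteq S_{u', 2w'}$, and it suffices to show that every discretized slab $S_{u',2w'}$ contains at most $60nwk$ points. Since $w' < 2w$, it is enough to show it contains at most $30 n w' k$ points.

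\emph{Step 3 (concentration and union bound).} For each fixed discretized slab, Step 1 bounds the mean count by $\mu \le 4nw'k$. The target $t := 30nw'k$ exceeds $6\mu$, so the Chernoff tail $\P(X \ge t) \le 2^{-t}$ applies, valid for $t \ge 6\,\mathbb{E}X$. Because $w' \ge \frac{\log n}{n}$, this probability is at most $2^{-30k\log n} = n^{-\Omega(k)}$ with a large constant in the exponent. The number of discretized slabs is at most $(3n\sqrt{k})^k \cdot O(\log(nk))$. For $n \ge k$ this is $n^{O(k)}$ with an exponent constant much smaller than $30/\ln 2$ once the $\log$ conventions are fixed. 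The union bound then gives total failure probability at most $1/n$.

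The main obstacle is bookkeeping in Step 3: checking that the exponent $30k\log n$ from Chernoff really dominates $k\log(3n\sqrt{k}) + \log\log(nk) + \log n$ with the specific constant $60$. This is where the assumption $n \ge k$ (so $\sqrt{k} \le \sqrt{n}$) enters, and where the base of the logarithm needs care. If the constants come out tight, I would shrink the net mesh by a constant factor, or replace the crude density bound $\sqrt{k}$ by Ball's bound $\sqrt{2}$, to gain slack.
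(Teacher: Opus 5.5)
Your proof is correct. The paper does not prove this lemma itself: it is quoted from \cite{dey2021branch} and used only as a black box in Corollary~\ref{cor:J}. There is therefore no internal argument to compare against, and your net-plus-Chernoff argument is a self-contained proof with the stated constant $60$.

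Each step checks out.
\begin{itemize}
\item Choosing a coordinate with $|u_i|\ge 1/\sqrt{k}$ and conditioning on the others gives $\P(Y\in S_{u,w})\le 2w\sqrt{k}$, so Ball's slicing theorem is not needed.
\item The net error $\varepsilon\sqrt{k}=\frac{\log n}{n}\le w$ is exactly where the hypothesis $w\ge \frac{\log n}{n}$ enters, and it gives $S_{u,w}\cap[0,1]^k\subseteq S_{u',2w'}$.
\item The mean count in $S_{u',2w'}$ is at most $4nw'\sqrt{k}\le 4nw'k$. The threshold $30nw'k$ therefore exceeds six times an upper bound on the mean, which suffices for the tail $2^{-30nw'k}\le 2^{-30k\log n}$.
\end{itemize}
The factor $k$ in $60nwk$ is precisely what pays for the $n^{O(k)}$-size discretization in the union bound.

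The bookkeeping you flag as the main obstacle is not tight. Take the worst convention, $\log=\ln$. The per-slab tail is then $n^{-30k\ln 2}\le n^{-20k}$. (Your ``$30/\ln 2$'' should read $30\ln 2$, or $30$ for base-$2$ logs; this is harmless.) For $n\ge 3$ the number of discretized slabs is at most $(3n^{3/2})^k(1.5\log_2 n+2)\le n^{5k/2}(1.5\log_2 n+2)$. The union bound is thus at most $n^{-17k}\cdot O(\log n)\ll 1/n$, with no need to refine the mesh or invoke a sharper density bound.

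Small-$n$ edge cases can also be dismissed up front. For example, $(3/\varepsilon)^k\le (3n\sqrt{k})^k$ uses $\log n\ge 1$. But $60nwk\ge 60k\log n$ for every admissible $w$, so the statement is vacuous unless $n>60k\log n$. That condition forces $n$ to be large and makes $n\ge k$ automatic.
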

Notice that if a column $\left(\cv^{(0)}_j, \av^{(0)}_j\right)$ is such that $j$ belongs to $J_\ell$, then this column belongs to the $(m+1)$-dimensional slab $\left\{y : d(y,H) \le \tfrac{\log n}{n} 2^{\ell+1}\right\}$ of width $\frac{\log n}{n} 2^{\ell+1}$; thus, the previous lemma can be used to upper-bound the size $|J_\ell|$ (and similarly for $|J_{\mathrm{rem}}|$). Notice that we do not even need to take a union bound over the $\ell$'s, since the previous lemma controls their sizes simultaneously. This gives the following:

\begin{cor} \label{cor:J}
    With probability at least $1 - \frac{1}{n}$, we have simultaneously
    \begin{gather*}
        |J_{\mathrm{rem}}| \le 120 (m + 1) \log n,\\
        |J_\ell| \le 60(m+1) 2^{\ell + 1} \log n,~~~~\forall \ell \in [\log n - 1].
    \end{gather*}
\end{cor}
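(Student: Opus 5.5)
We apply Lemma~\ref{lemma:uniformSlab} with $k = m+1$ to the random vectors $Y^j := (c^{(0)}_j, \av^{(0)}_j)$, $j \in [n]$. These are independent and uniform on $[0,1]^{m+1}$ by the stochastic-input model. For $n \ge m+1$, the lemma gives an event $\mathcal{E}$ of probability at least $1-\frac{1}{n}$. On $\mathcal{E}$, every slab $S_{u,w}$ with $u \in \mathbb{S}^{m}$ and $w \ge \frac{\log n}{n}$ contains at most $60 n w (m+1)$ of the $Y^j$'s. The key point is that this holds \emph{uniformly} over all normals $u$ and widths $w$. So although the hyperplane $H$ depends on the random data through $\overline{\mu}_*$, we may plug it in after the fact without any conditioning or union bound.

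The slab in question is the one around $H$. Set $u := (1,-\overline{\mu}_*)/\|(1,-\overline{\mu}_*)\|_2 \in \mathbb{S}^{m}$; it is well defined since its first coordinate makes the vector nonzero. Because $H$ passes through the origin with unit normal $u$, we have $d(y,H) = |\iprod{u}{y}|$. Hence
\[
\{y : d(y,H) \le w\} = S_{u,w}.
\]

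Now fix an outcome in $\mathcal{E}$ and bound each bucket.
\begin{itemize}
\item For $J_{\mathrm{rem}}$, every $j$ satisfies $d(Y^j,H) \le \frac{2\log n}{n}$, so $Y^j \in S_{u,2\log n/n}$. The width $w = \frac{2\log n}{n} \ge \frac{\log n}{n}$ is admissible, and the count is at most $60 n \cdot \frac{2\log n}{n}(m+1) = 120(m+1)\log n$.
\item For $\ell \in [\log n - 1]$, every $j \in J_\ell$ satisfies $d(Y^j,H) \le \frac{\log n}{n}2^{\ell+1}$, so $Y^j \in S_{u,w_\ell}$ with $w_\ell := \frac{\log n}{n}2^{\ell+1} \ge \frac{\log n}{n}$. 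The lemma gives $|J_\ell| \le 60 n w_\ell (m+1) = 60(m+1)2^{\ell+1}\log n$.
\end{itemize}
All these bounds hold simultaneously because they all follow from the single event $\mathcal{E}$.

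The only subtle step is justifying the use of a data-dependent hyperplane. This is exactly what the uniformity over $u$ in Lemma~\ref{lemma:uniformSlab} provides, so the step requires no further argument. One small remaining check concerns $n < m+1$: either we invoke the footnote's observation that the statement is vacuous there, or we simply note that the claim is for sufficiently large $n$ with $m$ fixed.
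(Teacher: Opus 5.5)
Your proof is correct and follows the paper's own argument. You apply Lemma~\ref{lemma:uniformSlab} with $k=m+1$ to the first-stage columns $(c^{(0)}_j,\av^{(0)}_j)$, place each bucket in the slab of the corresponding width around $H$, and use the lemma's uniformity over normals and widths to handle the data-dependent hyperplane and all buckets at once without a union bound; your explicit identification $d(y,H)=|\iprod{u}{y}|$ with $u$ the normalized $(1,-\overline{\mu}_*)$, and your check that every width is at least $\frac{\log n}{n}$, fill in the details the paper leaves implicit.
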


Now we are ready to prove Lemma~\ref{lemma:omega}.


\begin{proof}[of Lemma \ref{lemma:omega}]
    Since we assumed the integrality gap satisfies $\Delta \le O_m(\frac{s \log s\log^2 n}{n})$ with probability at least $1 - \delta$, by taking a union bound, with probability at least $1 - \frac{1}{n} - \delta$ both this event and the upper bounds on $|J_\ell|$ and $|J_{\mathrm{rem}}|$ from Corollary~\ref{cor:J} hold. We show that in this case $|\Omega| \le n^{O_m(\log s)}$, proving the lemma.

    Under this event, we have $C := \frac{n}{\log n} \cdot \frac{\Delta}{s} \le f(m) \cdot \log s \log n$ for some increasing function $f(\cdot) \ge 1$. Let $\ell_0 \ge 0$ be a sufficiently large integer of order $\frac{\log(O_m(\log s))}{2}$ but satisfying $60 (m+1) 2^{2\ell_0 + 1} \log n \ge 4 C$ (which exists by the previous upper bound on $C$). We will use the following standard estimate: $\binom{a}{\le b} \le (\frac{4a}{b})^b$ whenever $a \ge 4b$ (our definition of $\ell_0$ ensures that this requirement holds in our application of the estimate). Then we have the following upper bound:
    \begin{align*}
        \prod_{\ell=\ell_0}^{\log C}
        \binom{|J_{\ell}|}{\le C/2^\ell} \le \prod_{\ell=\ell_0}^{\log C} \bigg(\frac{240 (m+1)2^{\ell+1} \log n}{(f(m) \cdot \log s \log n)/2^\ell} \bigg)^{C/2^\ell} &\le \prod_{\ell=\ell_0}^{\log C} \bigg(\frac{O_m(1) \cdot 2^{2\ell}}{\log s} \bigg)^{C/2^\ell}\\
        &\le \bigg(\frac{O_m(1)}{\log s} \bigg)^{C \sum_{\ell \ge 1} \frac{1}{2^\ell}} \cdot 2^{2C \sum_{\ell \ge 1} \frac{\ell}{2^\ell}}\\
        &\le \bigg(\frac{O_m(1)}{\log s} \bigg)^{O(C)} \cdot 2^{O_m(C)} \le n^{O_m(\log s)}.
    \end{align*}
    For the terms with $\ell < \ell_0$, we can use the crude estimate $\binom{a}{b} \le 2^a$ to get
    \begin{align*}
        \prod_{\ell=1}^{\ell_0 - 1}
        \binom{|J_{\ell}|}{\le C/2^\ell} \,\le\, \prod_{\ell = 1}^{\ell_0 - 1} 2^{|J_\ell|} \,\le\, 2^{O_m(1) \cdot \ell_0 \cdot 2^{\ell_0} \log n} \,\le\, 2^{O_m(1)\cdot \log \log s \cdot \sqrt{\log s} \cdot \log n} \,\le\, n^{O_m(\log s)}.
    \end{align*}
    Finally, directly from Corollary~\ref{cor:J} we have $|J_{\mathrm{rem}}| \le O_m(\log n)$. Employing these bounds in Lemma~\ref{lemma:count} we obtain
    \begin{align*}
        |\Omega| \,\le\, 2^{|J_{\mathrm{rem}}|} \cdot \prod_{\ell=1}^{\log C}
        \binom{|J_{\ell}|}{\le C/2^\ell} \,\le\, n^{O_m(\log s)}.
    \end{align*}
   This concludes the proof of Lemma~\ref{lemma:omega}.
\end{proof}




\section{Integrality Gap of Random Two-stage Integer Programs}
\label{sec:int_gap}

In this section, we prove Theorem~\ref{thm:inform_int_gap}. Our approach extends the probabilistic analysis of \cite{dyer1989probabilistic}, which was developed for single-block packing integer programs, to a broader class of multi-block packing integer programs with a two-stage structure.  The main technical difficulty, as well as a detailed comparison with the argument of \cite{dyer1989probabilistic}, are deferred to Section~\ref{sec:steps_of_prf_int_gap}, where the necessary background has been introduced.

Recall that the natural LP relaxation of \eqref{eq:IP}, parameterized by the right-hand side vector $\bv$, is given by
\begin{subequations}
\label{prob_lp_relax}
\begin{align}
\OPT_{\mathrm{LP}}(\bv)
:=\max \ & s \, \langle \cv^{(0)}, \xv^{(0)} \rangle
+ \sum_{i \in [s]} \langle \cv^{(i)}, \xv^{(i)} \rangle \\
\text{s.t.}\quad
& A^{(0)} \xv^{(0)} + A^{(i)} \xv^{(i)} \le \bv^{(i)}, 
\quad \forall i \in [s], \label{eq:sub_LP_relax}\\
& \mathbf{0} \le \xv \le \mathbf{1}. \label{eq:LPLast}
\end{align}
\end{subequations} and $\Delta$ denotes the (additive) integrality gap of this relaxation, namely $$\Delta := \OPT_{\text{LP}}(\bv)  - \OPT(\bv).$$
The formal statement of Theorem~\ref{thm:inform_int_gap} is as follows:
\begin{thm} \label{thm:smallGap}
    Let $\beta_{\min} := \min_{i \in [s], r \in [m]} \beta^{(i)}_r$ and set $\alpha := \beta_{\min} - \frac{1}{4}$.
    Under this stochastic model, and under the assumptions that $n \ge 2^{m+4} (m+2) \log(2mn)$, $2 \sqrt{n \ln s n m} + 6\alpha m \log n < n \cdot \big(\beta_{\min} - \frac{1}{4}\big), \frac{2\sqrt{3}m}{\alpha} \leq (m \log n)^{1/4}$, and $n$ is at least a sufficiently large constant, there exists a constant $\pi \in (0,1)$ that only depends on $m,\beta$ such that
    \begin{align*}
        \P\bigg(\Delta \le O_m\bigg(\frac{sf\log^2 n}{n}\bigg)\bigg) \ge 1 - {\frac{2s+3}{n^2}-(s+1) \pi^f}
    \end{align*}
    for all $f \geq \max\left\{\frac{\alpha 2^{m+6} \log (2mn)}{\log n}, \frac{2^{2m+8}}{\alpha^m}\right\}$.
    By choosing $f = \sigma \log s$ for some sufficiently large constant $\sigma$ (depends on $m,\beta$) and for sufficiently large $n$, this implies that
    \begin{align*}
         \P\bigg(\Delta \le O_m\bigg(\frac{s \log s\log^2 n}{n}\bigg)\bigg) \ge 0.9999.
    \end{align*}
\end{thm}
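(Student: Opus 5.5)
We would follow the Dyer--Frieze strategy: solve a perturbed LP, round it down, then greedily repair; the only new work is handling the coupling through the first-stage block $\xv^{(0)}$.

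\textbf{Step 1 (perturbed LP and its duals).} Shrink every right-hand side to $\bv' := \bv - \gamma\ones$ with $\gamma = \Theta_m(\log n)$. By LP sensitivity, $\OPT_{\LP}(\bv) - \OPT_{\LP}(\bv') \le \gamma \sum_{i}\|\mu^{(i)}_*\|_1$. We then bound the optimal duals. Because $\beta^{(i)}_r > \tfrac14$, complementary slackness together with the averaging structure $s\,\cv^{(0)} - (A^{(0)})^\top \sum_i \mu^{(i)}$ gives $\|\mu^{(i)}\|_1 = O_m(1/\alpha)$ for every scenario, with probability $1-O(s/n^2)$. This uses a Chernoff bound per scenario and is where the assumption $2\sqrt{n\ln snm}+6\alpha m\log n < n\alpha$ enters. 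The loss from this step is therefore $O_m(s\log n)$, i.e. $O_m(s\log^2 n/n)$ after the normalisation used in Theorem~\ref{thm:smallGap}. Equivalently, work at scale $\gamma = O(\log n)$ in constraint units, which matches the width $\tfrac{\log n}{n}$ used in Lemma~\ref{lemma:uniformSlab}.

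\textbf{Step 2 (rounding down).} Take a basic optimal solution $\xv'$ of the LP with $\bv'$. It has at most $m s$ fractional coordinates overall, but we need this scenario by scenario. Each scenario block $(\xv^{(0)},\xv^{(i)})$ together with its $m$ rows has at most $O(m)$ fractional coordinates once $\xv^{(0)}$ is fixed, and $\xv^{(0)}$ itself has at most $m$ fractional entries by a projection/aggregation argument with the averaged dual $\overline\mu$. Rounding all fractional entries down costs at most $O(m)\cdot s$ in objective. It remains feasible and leaves slack of at least $\gamma$ in every row.

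\textbf{Step 3 (greedy refill using slack).} The rounding loss must be recovered. Columns whose reduced cost lies in $(0,\tfrac{\log n}{n}]\cdot$const are set to $0$ in $\xv'$; we add them back greedily. Since each column has $\ell_\infty$-norm at most $1$, the slack $\gamma$ allows adding about $\gamma$ such columns per scenario. Lemma~\ref{lemma:uniformSlab}, applied for each $i$ to the $(2m+1)$-dimensional columns of block $i$ and the hyperplane given by $\mu^{(i)}$, shows that the number of columns near the hyperplane is controlled. This union bound over $s+1$ blocks produces the $(2s+3)/n^2$ term. The resulting final gap is the sum of reduced-cost mass of the flipped columns plus the residual loss from Step 1, giving $O_m(s f \log^2 n / n)$.

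\textbf{Main obstacle: the $\pi^f$ term.} The dual bound in Step 1 is not uniform over $s$ scenarios at a $1/\mathrm{poly}(n)$ level. We would handle it as follows. Show that for each block, the event "more than $f$ columns needed, or the dual has large norm" has probability at most $\pi^f$. This holds because each candidate column independently falls in a favourable region (small $A$-entries, large $c$) with constant probability $\ge (\alpha/4)^m 2^{-O(m)}$; that probability defines $1-\pi$, and the thresholds $f \ge 2^{2m+8}/\alpha^m$ make the estimate valid. A union bound then gives $(s+1)\pi^f$. Finally, choosing $f=\sigma\log s$ makes $(s+1)\pi^f \le 10^{-5}$, while $(2s+3)/n^2$ is small for large $n$ (assuming $s\le n$; otherwise we absorb it via $f$). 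We expect the core difficulty to be the uniform dual-norm bound across scenarios, since the first-stage prices are shared.
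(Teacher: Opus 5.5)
\textbf{There is a genuine gap.} Your bookkeeping in Steps 1--2 does not close, and the greedy refill in Step 3 cannot close it.

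$\Delta$ is an absolute gap, and the theorem involves no normalisation. The sensitivity loss $\gamma\sum_i\|\mu^{(i)}\|_1=\Theta_m(s\log n)$ and the rounding loss $O(ms)$ therefore exceed the target $O_m(sf\log^2 n/n)$ by roughly a factor $n/\log n$. These losses can only be cancelled by the columns you add back. Lemma~\ref{lemma:startBoundGap} makes precise what survives the cancellation: per block, $\|\mu^{(i)}_\diamond\|_1$ times the $\ell_\infty$-norm of the unused slack $\zv^{(i)}-\sum_{j\in T^{(i)}}\av^{(i)}_j$, plus the reduced costs of the added columns. So the added columns must consume the slack vector to within $O(\log^2 n/n)$ in all $m$ rows simultaneously; the paper achieves $1/n$.

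A greedy refill cannot do this. The usable columns have entries of order $\alpha$, so greedy only guarantees that the leftover slack in \emph{some} row is below one column entry, and says nothing about the other rows. That leaves a loss of order $\|\mu^{(i)}_\diamond\|_1$ per block, i.e.\ $\Theta(s)$ in total.

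The missing idea is the Dyer--Frieze discrepancy lemma (Lemma~\ref{lem_discrepancy}), which is a subset-sum statement. Among $2m\log n$ independent columns uniform on $[\alpha,2\alpha]^m$, with probability at least $2^{-m-2}$ some $m\log n$ of them sum to within $1/n$ of $\zv^{(i)}$ in every row. This is why $d$ in \eqref{eq:defD} has its exact value (it centres the target), and it is where the hypothesis $\frac{2\sqrt{3}m}{\alpha}\le(m\log n)^{1/4}$ comes from. The $\pi^f$ term is simply the probability that this fails in all $f'=\frac{f\alpha^m}{2^{m+6}m}$ disjoint groups of candidate columns. It is not a ``favourable region'' or dual-norm event as you describe.

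Setting this up also needs ingredients your plan lacks. Lemma~\ref{lemma:uniformSlab} gives only an \emph{upper} bound on the number of columns near a hyperplane. You need a \emph{lower} bound on the number of zero columns with reduced cost in $[-\frac{f\log n}{n},0]$. You also need these columns to be independent with a known distribution, even though $\mu_\diamond$ is a function of them. The paper obtains this from three pieces:
\begin{itemize}
\item the conditioning Lemma~\ref{lem:cond_indp}: given $\cN_0$ and the other columns, the columns in $\cN_0$ are i.i.d.\ uniform on $\nrc^{(i)}_{\mu^{(i)}_\diamond}$;
\item the bound $|\cN_0^{(i)}|\gtrsim n\min\{\|\mu^{(i)}_\diamond\|_1,1\}$ of Lemma~\ref{lemma:numZeros}, proved with an $\e$-net over duals plus Chernoff;
\item the box-like region of Claim~\ref{claim:good_col}, whose relative volume scales as $1/\min\{\|\mu^{(i)}_\diamond\|_1,1\}$ and exactly offsets the previous bound.
\end{itemize}

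Accordingly, the term $\frac{2s+3}{n^2}$ is $\frac{1}{n^2}$ (dual bound) $+\frac{s+1}{n^2}$ (number of zeros) $+\frac{s+1}{n^2}$ (Chernoff for $|\cM^{(i)}|$). It is not a union bound over Lemma~\ref{lemma:uniformSlab}, which only holds with probability $1-\frac{1}{n}$. Finally, the uniform dual bound is not the obstacle you expect. The typicality argument of Lemma~\ref{lemma:boundDual} gives $\|\mu^{(i)}_\diamond\|_\infty\le\frac{1}{2\alpha}$ for all blocks at once with probability $1-\frac{1}{n^2}$.
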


The full proof of this theorem is rather involved and we sketch the main proof ideas in Section~\ref{sec:steps_of_prf_int_gap}.

\subsection{Main Steps of Theorem~\ref{thm:smallGap}} 
\label{sec:steps_of_prf_int_gap}

Since the proof of Theorem~\ref{thm:smallGap} consists of several technical components, we first present the key lemmas and explain how they together imply the theorem. Each lemma is then proved in a separate subsection.

To bound the additive integrality gap 
$\Delta = \OPT_{\mathrm{LP}}(\bv) - \OPT(\bv)$,
we first derive a bound on how the objective value changes when moving from an optimal LP solution to an integral one (e.g., via rounding). In later sections, we show how to construct such a desired rounding.
To get a hold of the LP relaxation, it will be useful to consider its saddle-point (min–max) formulation similar to the previous argument. Let 
\begin{align}
L^{std}_{\bv}(\xv,\mu) := s \cdot \iprod{\cv^{(0)}}{\xv^{(0)}} + \sum_{i \in [s]} \iprod{\cv^{(i)}}{\xv^{(i)}} + \sum_{i \in [s]} \iprod{\mu^{(i)}}{\bv^{(i)} - A^{(0)} \xv^{(0)} - A^{(i)} \xv^{(i)} } \label{eq:lagStd}
\end{align}
denote the objective of this Lagrangian relaxation, so the LP value is $\OPT_{\textup{LP}} = \max_{0 \le \xv \le 1} \min_{\mu \ge 0} L^{std}_{\bv}(\xv,\mu)$.

We use $\beta_{\min} := \min_{i \in [s], r \in [m]} \beta^{(i)}_r$, and set $\alpha := \beta_{\min} - \frac{1}{4}$. With a lot of hindsight, define 
\begin{align}
d := \frac{3\alpha m \log n}{2} \cdot \bigg(1 + \frac{1}{\sqrt{3} \cdot  n^2}\bigg); \label{eq:defD}
\end{align}
We will actually consider optimal primal/dual solution for the LP with modified RHS $\bv_{\diamond} := \bv - 2d\ones$. This extra slack will be necessary {to ensure feasibility} when we perform our rounding in Lemma \ref{lemma:condGap}, in Section \ref{sec:goodRounding}. 
The following lemma is used to control integrality gap $\Delta$.  
Here, $\av^{(i)}_j$ denotes the $j$-th column of the matrix $A^{(i)}$ and $\xv_j^{(i)}$ denotes the $j$-th entry of the vector $\xv^{(i)}$.
\begin{lemma} \label{lemma:startBoundGap}
    Let $(\xv_{\diamond}, \mu_{\diamond})$ be saddle-point for $L^{std}_{\bv_{\diamond}}(\xv,\mu)$ (notice the modified right-hand side $\bv_{\diamond}$); define $\mu^{(0)} := \frac{1}{s} \sum_{i=1}^s \mu^{(i)}$. Also consider any feasible solution $\bar{\xv}$ for the original (integer) problem \eqref{eq:IP} that keeps all $1$-entries of $\xv^{(i)}_{\diamond}$ (i.e., $\xv^{(i)}_{\diamond j} = 1 \implies \bar{\xv}^{(i)}_j = 1$).
    Then, the additive integrality gap of the original problem can be upper bounded as 
    $\Delta \le s \cdot \Delta^{(0)}(\bar{\xv}^{(0)}) + \sum_{i = 1}^s \Delta^{(i)}(\bar{\xv}^{(i)})$, where for all $i = 0, \ldots, s$ $$\Delta^{(i)}(\bar{\xv}^{(i)}) := \big\|\mu^{(i)}_{\diamond}\big\|_1 \cdot \underbrace{\big\|d \ones - A^{(i)} (\xv^{(i)}_{\diamond} - \bar{\xv}^{(i)})\big\|_{\infty}}_{\text{constraint slackness}} + \sum_{j : \bar{\xv}^{(i)}_j > \xv^{(i)}_{\diamond j}} \underbrace{\Big(\langle \mu^{(i)}_{\diamond}, \colij \rangle - c^{(i)}_j
 \Big).}_{\text{reduced cost}}$$
\end{lemma}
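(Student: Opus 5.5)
The plan is to bound $\OPT_{\mathrm{LP}}(\bv)$ from above by weak duality using the dual $\mu_{\diamond}$, compare it with the objective value of $\bar{\xv}$, and then regroup the resulting difference scenario by scenario.

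\textbf{Step 1: upper bound on $\OPT_{\mathrm{LP}}(\bv)$.} Since $\Delta = \OPT_{\mathrm{LP}}(\bv) - \OPT(\bv)$ and $\bar{\xv}$ is feasible for \eqref{eq:IP}, we have $\Delta \le \OPT_{\mathrm{LP}}(\bv) - \mathrm{val}(\bar{\xv})$, where $\mathrm{val}$ denotes the objective \eqref{eq:obj1}. Since $\mu_{\diamond} \ge 0$, weak duality gives $\OPT_{\mathrm{LP}}(\bv) \le \max_{0\le \xv\le 1} L^{std}_{\bv}(\xv,\mu_{\diamond})$. The two Lagrangians are related by
\[
L^{std}_{\bv}(\xv,\mu) = L^{std}_{\bv_{\diamond}}(\xv,\mu) + 2d\sum_{i}\|\mu^{(i)}\|_1 .
\]
Hence the maximum over $\xv$ is attained at $\xv_{\diamond}$, because of the saddle-point property. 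Complementary slackness in the $\bv_{\diamond}$-problem, i.e.\ $\langle \mu^{(i)}_{\diamond}, \bv^{(i)}_{\diamond} - A^{(0)}\xv^{(0)}_{\diamond} - A^{(i)}\xv^{(i)}_{\diamond}\rangle = 0$, then yields
\[
\OPT_{\mathrm{LP}}(\bv) \le \mathrm{val}(\xv_{\diamond}) + 2d\sum_i \|\mu^{(i)}_{\diamond}\|_1 .
\]

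\textbf{Step 2: split the objective difference.} Write each cost vector as (reduced cost) $+$ (dual part): $\cv^{(i)} = (\cv^{(i)} - (A^{(i)})^\top \mu^{(i)}_{\diamond}) + (A^{(i)})^\top \mu^{(i)}_{\diamond}$ for $i \ge 1$, and similarly for $i=0$ with $\mu^{(0)}$. For the first stage we use $s\langle \mu^{(0)}, A^{(0)}\vv\rangle = \sum_i \langle \mu^{(i)}_{\diamond}, A^{(0)}\vv\rangle$. Then $\mathrm{val}(\xv_{\diamond}) - \mathrm{val}(\bar{\xv})$ splits into two pieces:
\begin{itemize}
\item a reduced-cost part $\sum_j r^{(i)}_j(\xv^{(i)}_{\diamond j} - \bar{\xv}^{(i)}_j)$, weighted by $s$ for $i=0$, where $r^{(i)}_j := c^{(i)}_j - \langle \mu^{(i)}_{\diamond},\colij\rangle$ (with $\mu^{(0)}$ in place of $\mu^{(i)}_{\diamond}$ when $i=0$);
\item a dual part $\sum_i \langle \mu^{(i)}_{\diamond}, A^{(0)}(\xv^{(0)}_{\diamond}-\bar{\xv}^{(0)}) + A^{(i)}(\xv^{(i)}_{\diamond}-\bar{\xv}^{(i)})\rangle$.
\end{itemize}

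\textbf{Step 3: the reduced-cost part, coordinate by coordinate.} This is the only delicate step; it uses the hypothesis that $\bar{\xv}$ keeps all $1$-entries of $\xv_{\diamond}$. Since $\xv_{\diamond}$ maximizes the Lagrangian over the box, $r_j>0$ forces $\xv_{\diamond j}=1$ and $r_j<0$ forces $\xv_{\diamond j}=0$.
\begin{itemize}
\item If $\bar{\xv}_j < \xv_{\diamond j}$, then $\xv_{\diamond j}\neq 1$ by the hypothesis, and $\xv_{\diamond j}\neq 0$ trivially. So $\xv_{\diamond j}$ is fractional, hence $r_j = 0$ and the term vanishes.
\item If $\bar{\xv}_j > \xv_{\diamond j}$, then $\xv_{\diamond j}<1$, so $r_j\le 0$. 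Because $|\xv_{\diamond j}-\bar{\xv}_j|\le 1$, the term is at most $-r_j = \langle \mu,\colij\rangle - c_j$.
\end{itemize}
This produces exactly the reduced-cost sums appearing in $\Delta^{(i)}$.

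\textbf{Step 4: the dual part via H\"older.} Combine the dual part with the $2d\sum_i\|\mu^{(i)}_{\diamond}\|_1$ term from Step 1, splitting $2d\ones = d\ones + d\ones$:
\[
\sum_i \langle \mu^{(i)}_{\diamond}, d\ones - A^{(0)}(\xv^{(0)}_{\diamond}-\bar{\xv}^{(0)})\rangle + \sum_i \langle \mu^{(i)}_{\diamond}, d\ones - A^{(i)}(\xv^{(i)}_{\diamond}-\bar{\xv}^{(i)})\rangle .
\]
The first sum equals $s\langle \mu^{(0)}, d\ones - A^{(0)}(\cdots)\rangle$. Applying H\"older ($\ell_1$--$\ell_\infty$) to each inner product gives the constraint-slackness terms $s\|\mu^{(0)}\|_1\|\cdot\|_\infty$ and $\|\mu^{(i)}_{\diamond}\|_1\|\cdot\|_\infty$. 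Adding the reduced-cost bounds from Step 3 yields $\Delta \le s\,\Delta^{(0)}(\bar{\xv}^{(0)}) + \sum_i \Delta^{(i)}(\bar{\xv}^{(i)})$.

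The main obstacle is bookkeeping rather than anything deep. One point is the sign analysis in Step 3, which is exactly where the keep-the-ones hypothesis is needed. The other is correctly regrouping the first-stage dual term via $\mu^{(0)} = \frac1s\sum_i\mu^{(i)}_{\diamond}$ in Steps 2 and 4.
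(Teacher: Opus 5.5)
Your route is the paper's own: weak duality with $\mu_\diamond$ at right-hand side $\bv$, complementary slackness to remove the $\bv_\diamond$-slack term, the coordinatewise reduced-cost argument that uses the keep-the-ones hypothesis, and H\"older. Steps 1--3 are correct, but Step 4 flips a sign. The dual part you obtained in Step 2 enters $\mathrm{val}(\xv_\diamond)-\mathrm{val}(\bar\xv)$ with a plus sign. Adding $2d\sum_i\normm{\mu^{(i)}_\diamond}_1=\sum_i\iprod{\mu^{(i)}_\diamond}{2d\ones}$ therefore gives
\[
\sum_{i\in[s]}\iprod{\mu^{(i)}_\diamond}{d\ones + A^{(0)}(\xv^{(0)}_\diamond-\bar\xv^{(0)})}+\sum_{i\in[s]}\iprod{\mu^{(i)}_\diamond}{d\ones + A^{(i)}(\xv^{(i)}_\diamond-\bar\xv^{(i)})},
\]
not the expression with $d\ones - A(\xv_\diamond-\bar\xv)$ that you display. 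What your argument actually proves is the bound with slackness factor $\normm{d\ones + A^{(i)}(\xv^{(i)}_\diamond-\bar\xv^{(i)})}_\infty=\normm{d\ones - A^{(i)}(\bar\xv^{(i)}-\xv^{(i)}_\diamond)}_\infty$. This is exactly what the paper's proof derives, and it is the form used later in \eqref{eq:delta_i}. The display in the statement has the difference reversed, and your Step 4 reaches that reversed form only through the sign slip.

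The discrepancy is not cosmetic: with the sign as displayed, the inequality is false. Neither argument uses the particular value of $d$, so treat $d>0$ as a parameter. Take $s=m=n=1$, $c^{(0)}=0$, $A^{(0)}=1$, $c^{(1)}=1$, $A^{(1)}=L>4d$, and $\bv_\diamond=L/2$. Then $\xv^{(0)}_\diamond=0$, $\xv^{(1)}_\diamond=\tfrac12$, $\mu_\diamond=1/L$, $\OPT_{\LP}(\bv)=\tfrac12+\tfrac{2d}{L}$, and $\OPT(\bv)=0$. For $\bar\xv=0$, the displayed right-hand side is $\tfrac1L\big(d+|d-\tfrac{L}{2}|\big)=\tfrac12<\Delta$, whereas the corrected one is $\tfrac1L\big(2d+\tfrac{L}{2}\big)=\Delta$. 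So keep the plus sign in Step 4 and state the conclusion with $\normm{d\ones+A^{(i)}(\xv^{(i)}_\diamond-\bar\xv^{(i)})}_\infty$. Nothing downstream changes, because the rounding in Section~\ref{sec:goodRounding} bounds precisely this quantity.
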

The proof of this lemma follows the same line of argument as in
\cite{dyer1989probabilistic,borst2023integrality}, and is included in
Section~\ref{sec:firstBound} for completeness.
We emphasize that controlling the change in the objective value
$\langle \cv, \xv \rangle$ alone is not sufficient, since the LP relaxation is
solved with a modified right-hand side $\bv_{\diamond}$.
In order to relate the rounded solution back to the original problem with
right-hand side $\bv$, we require an upper bound on
$\OPT_{\mathrm{LP}}(\bv)$.
This bound is obtained via a standard sensitivity analysis argument based on
the optimal dual multiplier $\mu_{\diamond}$.
Taken together, Lemma~\ref{lemma:startBoundGap} decomposes the integrality gap
into two components: one arising from reduced costs and the other from
constraint slackness.

\paragraph{Proof strategy of Theorem~\ref{thm:smallGap}.} Our proof strategy follows a line similar to that of \cite{dyer1989probabilistic}.
In the single-block case, i.e., when $s=1$, and motivated by
Lemma~\ref{lemma:startBoundGap}, the construction of a feasible integral
solution $\bar{\xv}$ that yields the desired bound on $\Delta$ proceeds as
follows.
We first (conceptually) solve $\OPT_{\mathrm{LP}}(\bv_{\diamond})$ to obtain an
optimal solution $\xv_{\diamond}$, and then round down all fractional entries
of $\xv_{\diamond}$.
While this rounding step guarantees integrality, it inevitably increases the
constraint slackness.
To compensate for this effect, we subsequently flip a carefully selected
subset of originally zero variables with small reduced cost in order to absorb
the additional slackness introduced by rounding.
A key difficulty of this approach is that the gap formula in
Lemma~\ref{lemma:startBoundGap} depends jointly on
$(\xv_{\diamond}, \mu_{\diamond})$, whose distributions are complicated and
correlated with the random data $(A,\cv)$ in~\eqref{eq:IP}.
Therefore, the existence of a sufficiently large collection of such zero
variables is highly non-trivial and is guaranteed via a sophisticated
probabilistic argument.

\paragraph{Comparison to \cite{dyer1989probabilistic}.} In this work, we extend the result of \cite{dyer1989probabilistic} to
multi-block packing integer programs with a two-stage structure.
If one ignores the block structure of~\eqref{eq:IP} and applies the argument of
\cite{dyer1989probabilistic} directly, one obtains $
\Delta \le O_m\!\left(\exp(\mathrm{poly}(s)) \frac{\log^2 n}{n}\right)$
with high probability, which is significantly weaker than the bound stated in
Theorem~\ref{thm:smallGap}.
An important observation is that if we fix the optimal dual variable
$\mu_{\diamond}$, then all block variables $\xv^{(i)}$ become independent of
each other in the Lagrangian
$L^{std}_{\bv_{\diamond}}(\xv,\mu)$, since all coupling constraints have been
dualized.
This phenomenon is also reflected in the gap formula in
Lemma~\ref{lemma:startBoundGap}, namely, there are no cross terms between
different block variables $\xv^{(i)}$.
In this case, a natural attempt is to apply the argument of
\cite{dyer1989probabilistic} to each block separately, which hopefully would yield $
\Delta \le O_m\!\left( s \log s \frac{\log^2 n}{n}\right),$
as promised in Theorem \ref{thm:smallGap}.
However, this direct approach encounters a technical difficulty.
The analysis in \cite{dyer1989probabilistic,borst2023integrality} depends critically on the magnitude
of the optimal dual solution and on the number of zero entries in the optimal
primal solution.
Both quantities are sensitive to the block structure of~\eqref{eq:IP} and to the
number of scenarios $s$.
In fact, using the arguments in
\cite{dyer1989probabilistic,borst2023integrality}, one can show that the optimal
dual solution and the number of zero entries of the optimal primal solution
behave as in the single-block case \textbf{on average}, that is,
$\normm{\frac{1}{s} \sum_{i=1}^s \mu^{(i)}_{\diamond}}_{\infty}
\approx
\normm{\text{optimal dual variables when } s=1}_{\infty}.$
Despite this favorable average behavior, this information alone is insufficient
to characterize the full distribution of these quantities.
Consequently, it does not suffice to reduce the analysis of the multi-block
problem~\eqref{eq:IP} to that of the single-block case.
Indeed, a naive blockwise application of the analysis in
\cite{dyer1989probabilistic} would still result in an exponential dependence on
$s$, yielding a bound of the form
$O_m\!\left(\exp(\mathrm{poly}(s)) \frac{\log^2 n}{n}\right).$
Our main contribution is to show that if we impose an additional lower-bound
requirement on the right-hand side $\bv$, namely,
$\bv^{(i)}_r = \beta^{(i)}_r \cdot 2n,
\beta^{(i)}_r \in \big(\tfrac{1}{4}, \tfrac{1}{2}\big),
\ \forall i \in [s],\ r \in [m],$
then the average behavior of the magnitude of the optimal dual solution and the
number of zero entries in the optimal primal solution accurately reflects their
behavior in each individual block.
Under this condition, we can apply the single-block analysis to each block and
obtain the desired bound stated in
Theorem~\ref{thm:smallGap}.

Let $\mathcal{N}_0^{(i)} := \{j \in [n] : \xv^{(i)}_{\diamond} = 0\}$. Now let $Good$ be the event that the following happens: 
\begin{enumerate}
    \item For every block $i \in [s] \cup \{0\}$, $\normm{\mu_{\diamond}^{(i)}}_{\infty} \leq \frac{1}{2\alpha}$.
    \item For every block $i \in [s] \cup \{0\}$, if 
    $\normm{\mu_{\diamond}^{(i)}}_1 \ge \frac{f \log n}{2 \alpha n}$, then  $\big|\mathcal{N}_0^{(i)}\big| \geq \frac{n}{2^{m+{2}}} \cdot \min\big\{ \frac{1}{4} \normm{\mu^{(i)}_\diamond}_1, 1\big\}$.
\end{enumerate}

Note that in Item~2 of the event \emph{Good}, we require the cardinality of
$\mathcal{N}_0^{(i)}$ to scale proportionally with
$\|\mu^{(i)}_\diamond\|_1$, unless $\|\mu^{(i)}_\diamond\|_1$ is already
bounded by a small number with order $O(\frac{\log n}{n})$. This requirement, while subtle, is essential for the
analysis. In later steps, we will sample columns from $\mathcal{N}_0^{(i)}$
to enforce certain structural properties needed in the proof of
Lemma~\ref{lemma:mainDeltai}, and the corresponding sampling procedure
depends explicitly on the magnitude of the dual vector
$\mu^{(i)}_\diamond$. The condition above guarantees that
$\mathcal{N}_0^{(i)}$ contains a sufficiently large collection of indices,
ensuring that enough samples can be drawn to carry out the rounding
argument successfully.

The following lemma is the main technical result of this section. It shows
that, conditioned on the event \emph{Good}, the integrality gap
$\Delta$ is at most $O_m\!\left(\frac{s f \log^2 n}{n}\right)$ with high
probability.

\begin{lemma} 
{Let $\pi$ be the same constant defined in Theorem \ref{thm:smallGap}}. Under the condition of Theorem \ref{thm:smallGap}, It follows that 
\label{lemma:condGap}
    $$\Pr\bigg(\Delta > \Omega_m\bigg(\frac{{sf}\log^2n}{n}\bigg) ~\bigg|~ Good\bigg) \le  {\frac{(s+1)}{n^2}+(s+1) \pi^f}.$$
\end{lemma}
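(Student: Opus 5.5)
We plan to prove Lemma~\ref{lemma:condGap} by constructing, on the event $Good$, an explicit integral solution $\bar{\xv}$ and bounding the gap via Lemma~\ref{lemma:startBoundGap}. Since the bound $\Delta \le s\,\Delta^{(0)}(\bar{\xv}^{(0)}) + \sum_{i\in[s]}\Delta^{(i)}(\bar{\xv}^{(i)})$ has no cross terms, it suffices to show, for each block $i\in[s]\cup\{0\}$ separately, that with probability at least $1 - \frac{1}{n^2} - \pi^f$ (conditioned on $Good$) we have $\Delta^{(i)}(\bar{\xv}^{(i)}) \le O_m(\frac{f\log^2 n}{n})$. A union bound over the $s+1$ blocks then gives the stated failure probability $\frac{s+1}{n^2} + (s+1)\pi^f$, and summing (with weight $s$ on block $0$) gives $\Delta \le O_m(\frac{s f\log^2 n}{n})$.

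The rounding in each block follows \cite{dyer1989probabilistic}. We round down the fractional entries of $\xv^{(i)}_\diamond$. By basic-solution counting, the saddle point can be taken as a vertex, so there are at most $O(m)$ fractional coordinates per scenario block and at most $O(ms)$ in the first stage. Rounding these down increases the slack of each constraint row by at most the number of fractional entries. The modified right-hand side $\bv_\diamond = \bv - 2d\ones$ keeps feasibility with respect to $\bv$, since $d = \Theta(\alpha m\log n)$ dominates this loss. Rounding down costs nothing in reduced cost, because only coordinates with $\bar{x}_j > x_{\diamond j}$ contribute.

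Next we shrink the slackness $\|d\ones - A^{(i)}(\xv^{(i)}_\diamond - \bar{\xv}^{(i)})\|_\infty$ to $O_m(\log n)$, or to $O_m(1)$ whenever $\|\mu^{(i)}_\diamond\|_1$ is not tiny. We do this by flipping to $1$ a set of indices $j\in\mathcal{N}_0^{(i)}$ whose reduced cost $\langle\mu^{(i)}_\diamond,\colij\rangle - c^{(i)}_j$ is at most about $\frac{f\log n}{n\|\mu^{(i)}_\diamond\|_1}$. The resulting term $\|\mu^{(i)}_\diamond\|_1 \cdot \text{slack}$ is then $O_m(\frac{f\log^2 n}{n})$ via item~1 of $Good$ ($\|\mu\|_\infty\le \frac{1}{2\alpha}$). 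The summed reduced cost of about $\log n$ flipped columns is of the same order. If $\|\mu^{(i)}_\diamond\|_1 < \frac{f\log n}{2\alpha n}$, no flipping is needed: the product term is already $O_m(\frac{f\log^2 n}{n})$ because the slack is $O(d)$.

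For the flips, we view the columns in $\mathcal{N}_0^{(i)}$ as random points and argue that, conditioned on the LP data, enough of them fall in the region $\{(c,\av) : 0 \le \langle \mu,\av\rangle - c \le \tau\}$ with $\av$ having entries in prescribed ranges. Item~2 of $Good$ gives $|\mathcal{N}_0^{(i)}| \gtrsim \frac{n}{2^{m+2}}\min\{\|\mu\|_1/4, 1\}$. Each such column lands in this thin region with probability $\gtrsim \tau/\|\mu\|_1$, up to $m$-dependent factors and after conditioning. The expected number of good columns is therefore $\Omega_m(f\log n)$, and a Chernoff bound gives failure probability $\pi^f$ for some $\pi\in(0,1)$ depending on $m,\beta$. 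We then fill the slack greedily, one coordinate direction at a time, in the style of a discrepancy or greedy covering argument: we select columns whose entries are large in rows with large residual slack, so that slack decreases geometrically until it is $O_m(1)$. The $\frac{1}{n^2}$ term comes from a separate union bound making the fractional-coordinate and slack estimates hold.

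The main obstacle is the dependence between $(\xv_\diamond,\mu_\diamond)$ and the columns we sample from. I would handle it as \cite{dyer1989probabilistic} does. First, discretize $\mu$ on a fine net, which is possible since $\|\mu\|_\infty\le\frac{1}{2\alpha}$ under $Good$. Second, note that membership of column $j$ in $\mathcal{N}_0^{(i)}$ with positive reduced cost is determined by the position of that column relative to the hyperplane. Third, for each fixed net point, the columns lying beyond the hyperplane remain conditionally uniform on that region, so the counting and Chernoff step can be done for each net point and union-bounded, with the net size polynomial in $n$ and absorbed into the $\frac{1}{n^2}$ term. The condition $f \ge \frac{2^{2m+8}}{\alpha^m}$ is exactly what makes the expected count beat the net union bound.
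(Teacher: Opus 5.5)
\textbf{There is a genuine gap in the slack-filling step.} Your skeleton matches the paper: Lemma~\ref{lemma:startBoundGap}, the per-block solution $\bar{\xv}^{(i)}=\lfloor \xv^{(i)}_\diamond\rfloor+\chi_{T^{(i)}}$, the trivial case $\|\mu^{(i)}_\diamond\|_1<\frac{f\log n}{2\alpha n}$, and a union bound over the $s+1$ blocks. The problem is what happens after rounding down.

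Item~1 of $Good$ only gives $\|\mu^{(i)}_\diamond\|_1\le \frac{m}{2\alpha}$, and this norm can really be $\Theta(1)$. To get $\|\mu^{(i)}_\diamond\|_1\cdot\|A^{(i)}(\xv^{(i)}_\diamond-\bar{\xv}^{(i)})+d\ones\|_\infty=O_m(\frac{f\log^2 n}{n})$, the residual slack must therefore be polynomially small in every row at once. Your target of $O_m(1)$ slack gives only $\Delta^{(i)}=O_m(1)$, hence $\Delta=O_m(s)$.

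A greedy, row-by-row filling cannot fix this. It adds columns whose entries are $\Theta(\alpha)$, so it stalls once every remaining column would overshoot some row (and feasibility forbids overshooting).

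The missing ingredient is the Dyer--Frieze discrepancy lemma (Lemma~\ref{lem_discrepancy}). The paper applies it as follows:
\begin{itemize}
\item Keep only columns of $\mathcal{N}_0^{(i)}$ whose $\av$-part is uniform in $[\alpha,2\alpha]^m$ and whose reduced cost lies in $[-\frac{f\log n}{n},0]$.
\item Split them into $f'=\frac{f\alpha^m}{2^{m+6}m}$ groups of $2k$ columns, with $k=m\log n$.
\item In each group, with probability at least $2^{-(m+2)}$, some $k$-subset has column sum in $[\zv^{(i)}-\frac1n\ones,\zv^{(i)}]$, since $3k/4^{k/m}=3m\log n/n^2$.
\end{itemize}
The lemma needs the normalized target within $k^{1/4}$ of the mean of a $k$-sum. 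This is why $d=\frac{3\alpha k}{2}+\theta$. The $\pi^f$ term is this step's failure probability amplified over the $f'$ independent groups. The $\frac1{n^2}$ term is the Chernoff bound ensuring enough box columns, which is where $f\ge \frac{2^{2m+8}}{\alpha^m}$ enters.

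\textbf{Several supporting steps also need fixing.}

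(i) The reduced-cost threshold must be $\frac{f\log n}{n}$, independent of $\|\mu^{(i)}_\diamond\|_1$. The reduced-cost sum enters $\Delta^{(i)}$ unweighted, so your threshold $\frac{f\log n}{n\|\mu\|_1}$ gives a total of order $\log n$ when $\|\mu\|_1\approx\frac{f\log n}{n}$. The $\|\mu\|_1$-dependence instead cancels between two factors: the relative volume $\frac{f\log n\,\alpha^m}{n\min\{\|\mu\|_1,1\}}$ of the box region (the nonpositive-reduced-cost region has volume at most $\min\{\|\mu\|_1,1\}$), and the lower bound on $|\mathcal{N}_0^{(i)}|$ from $Good$.

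(ii) An $O(ms)$ bound on fractional first-stage coordinates is too weak. It makes the block-$0$ slack $\Theta(ms)$, which breaks Case~1 once $\Delta^{(0)}$ is multiplied by $s$, and violates the discrepancy lemma's hypothesis $\|\vv\|_\infty\le k^{1/4}$. You need at most $m$ fractional coordinates in every block, block $0$ included. This holds because the first-stage columns of a basis have all their rows drawn from $A^{(0)}$, so they have rank at most $m$. Also, rounding down never threatens feasibility; the $2d$ shift of the right-hand side exists to absorb the added columns.

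(iii) A net over $\mu$ cannot handle the dependence in this step. The discrepancy step succeeds only with probability $1-\pi^f$, which for $f=\sigma\log s$ cannot pay for a union bound over $(2mn)^m$ net points. Moreover, the target $\zv^{(i)}$ depends on $\xv_\diamond$ and the fractional columns, not only on $\mu$. The paper instead conditions exactly on $\mathcal{N}_0$ and the data outside it (Lemma~\ref{lem:cond_indp}). This fixes $\mu_\diamond$, $\xv_\diamond$ and $\zv^{(i)}$, while the columns in $\mathcal{N}_0$ stay independent and uniform on the nonpositive-reduced-cost region. The net is used only for bounding $\Pr(Good^c)$ in Lemma~\ref{lemma:numZeros}.
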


This is the most technical lemma and the proof is deferred to Section~\ref{sec:goodRounding}. 

Now we need to show that the event $Good$ holds with high probability, i.e., that the complement event $Good^c$ happens with tiny probability. Notice that this complement event consists of two union of the events:
\begin{gather*}
    E1:~~\textrm{$\exists i \in [s] {\cup \{0\}}$ such that $\|\mu^{(i)}_{\diamond}\|_{\infty} > \frac{1}{2\alpha}$} \\
    E2:~~
    \begin{array}{l}
      \textrm{$\forall i \in [s] {\cup \{0\}}, \|\mu^{(i)}_{\diamond}\|_{\infty} \le \frac{1}{2\alpha}$ but} 
      \textrm{$\exists i \in [s] {\cup \{0\}}$ such that $\|\mu^{(i)}_{\diamond}\|_1 \ge \frac{f \log n}{2\alpha n}$ and $\big|\mathcal{N}_0^{(i)}\big| < \frac{n}{2^{m+{2}}} \cdot \min\Big\{ \frac{1}{4} \normm{\mu^{(i)}_\diamond}_1, 1\Big\}$}.  
    \end{array}
\end{gather*}

The next lemma bounds the probability of the first event $E1$. 

\begin{lemma} \label{lemma:boundDual}
    With probability at least $1 - \frac{1}{n^2}$, under the condition of Theorem \ref{thm:smallGap}, we have $\|\mu^{(i)}_{\diamond}\|_{\infty} \le  {\dfrac{1}{2\alpha}}$ for all blocks $i \in [s] \cup \{0\}$.
\end{lemma}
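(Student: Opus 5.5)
The plan is to bound each block's dual variables through the first-order optimality conditions of the dual, using the lower bound $\beta_{\min} > \tfrac14$ to force every block's constraints to be filled largely by that block's own second-stage columns. Write the LP dual of $\OPT_{\mathrm{LP}}(\bv_\diamond)$ in minimized Lagrangian form:
\[
g(\mu) = \sum_{i\in[s]}\iprod{\mu^{(i)}}{\bv^{(i)}_\diamond} + s\sum_{j}\Big(c^{(0)}_j - \iprod{\mu^{(0)}}{\av^{(0)}_j}\Big)^+ + \sum_{i\in[s]}\sum_j\Big(c^{(i)}_j - \iprod{\mu^{(i)}}{\colij}\Big)^+ ,
\]
where $\mu^{(0)} = \frac1s\sum_i \mu^{(i)}$ and $\mu_\diamond$ minimizes $g$. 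The bound for block $0$ follows from the bounds for blocks $i\in[s]$, since $\mu^{(0)}_\diamond$ is their average and $\|\cdot\|_\infty$ is convex. So it suffices to show, for each $i\in[s]$ and $r\in[m]$, that $\mu^{(i)}_{\diamond,r} \le \frac{1}{2\alpha}$.

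\emph{Step 1 (complementary slackness).} Fix $i$ and $r$ with $\mu := \mu^{(i)}_{\diamond,r} > 0$. Complementary slackness with the saddle point $\xv_\diamond$ shows that constraint $r$ of block $i$ is tight:
\[
\sum_j a^{(0)}_{rj}x^{(0)}_{\diamond j} + \sum_j a^{(i)}_{rj}x^{(i)}_{\diamond j} = b^{(i)}_r - 2d .
\]
In addition, $x^{(i)}_{\diamond j} > 0$ only if $c^{(i)}_j \ge \iprod{\mu^{(i)}_\diamond}{\colij} \ge \mu\, a^{(i)}_{rj}$. Since $c^{(i)}_j\le 1$, this forces $a^{(i)}_{rj} \le 1/\mu$. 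Bounding $x^{(0)}_\diamond \le \ones$ therefore gives
\[
2n\beta_{\min} - 2d \;\le\; \sum_j a^{(0)}_{rj} \;+\; \sum_j a^{(i)}_{rj}\,\ones\big(a^{(i)}_{rj}\le 1/\mu\big).
\]
This is a deterministic inequality involving only the random data and the scalar $\mu$.

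\emph{Step 2 (concentration, uniformly in $\mu$).} For a uniform entry, $\E[a\,\ones(a\le t)] = t^2/2$ for $t\le 1$. Hoeffding's inequality gives, with probability $1 - 1/n^2$ after a union bound over the $m$ rows,
\[
\sum_j a^{(0)}_{rj} \le \frac n2 + \sqrt{n\ln(snm)} .
\]
For the truncated sums, I would apply Hoeffding on a grid of $n$ thresholds $t\in\{1/n,2/n,\dots,1\}$ and union bound over the grid, all $s$ blocks and $m$ rows. Monotonicity in $t$ handles intermediate values at an $O(1)$ additive cost. This yields, simultaneously for all $i$, $r$ and $t$,
\[
\sum_j a^{(i)}_{rj}\ones\big(a^{(i)}_{rj}\le t\big) \le \frac{nt^2}{2} + \sqrt{n\ln(snm)} .
\]
The assumption $2\sqrt{n\ln snm}+6\alpha m\log n < n\alpha$ is exactly what is needed to absorb these deviation terms together with $2d\approx 3\alpha m\log n$.

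\emph{Step 3 (conclude).} Suppose $\mu\ge 1$; otherwise $\mu \le 1 \le \frac1{2\alpha}$ already, because $\alpha<\tfrac14$. Substituting $t=1/\mu$ and $\beta_{\min} = \tfrac14+\alpha$ into the inequality of Step 1 gives
\[
2\alpha n - 2d - 2\sqrt{n\ln snm} \le \frac{n}{2\mu^2}.
\]
By the assumption, the left-hand side is at least $\alpha n - O(1)$. Hence $\mu \le (2\alpha)^{-1/2}(1+o(1))$, and this is at most $\frac1{2\alpha}$ because $\alpha < \tfrac14$ makes $2\alpha<1$, giving slack.

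I expect Step 2 to be the main obstacle. The threshold $1/\mu$ is data-dependent, since $\mu_\diamond$ is correlated with $A$, so the concentration must hold uniformly over all thresholds and all $s$ blocks at once. The grid-plus-monotonicity union bound is where the $\ln(snm)$ factor enters. One must also check that the first-stage contribution can be crudely bounded by $\sum_j a^{(0)}_{rj}\approx n/2$ without using any structure of $\xv^{(0)}_\diamond$. That crude bound is precisely why $\beta_{\min}>\tfrac14$ (that is, $b^{(i)}_r > n/2$) is required.
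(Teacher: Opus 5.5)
Your argument is correct in substance, but it takes a somewhat different route from the paper. The paper argues on the dual side, by contradiction. If $\mu^{(\bar i)}_{\diamond,1} > \xi = \frac{1}{2\alpha}$, then lowering it by $\e$ changes the dual objective by at most
\[
\e\Big(\sum_j A^{(0)}_{1j} - b^{(\bar i)}_{\diamond,1} + |\{j : c^{(\bar i)}_j \ge \xi A^{(\bar i)}_{1j}\}|\Big).
\]
A ``typical'' instance makes this negative. Typicality needs one Hoeffding event per row for $A^{(0)}$, and one per pair $(i,r)$ for the count, whose mean is $\frac{n}{2\xi} = n\alpha$.

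You instead use primal complementary slackness: row $r$ is tight, and $x_j>0$ forces $a_{rj}\le 1/\mu$. This is the same first-order condition viewed from the primal side, but you keep the weights $a^{(i)}_{rj}$, whereas the paper replaces them by $1$ and counts columns. Keeping the weights gives a sharper conclusion, $\mu \lesssim (2\alpha)^{-1/2}$ rather than $(2\alpha)^{-1}$, although the lemma does not need it. The price is a data-dependent truncation level $1/\mu$, which is why you need concentration uniformly over thresholds. Both proofs bound the first-stage usage crudely by $\sum_j a^{(0)}_{rj}\approx n/2$, and that is exactly where $\beta_{\min}>\frac14$ enters.

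One step does not deliver the stated probability as written. Hoeffding at deviation $\sqrt{n\ln(snm)}$ fails with probability $(snm)^{-2}$ per event. Your grid creates $n\cdot s\cdot m$ events, so the union bound only gives failure probability $\frac{1}{snm}$. That exceeds $\frac{1}{n^2}$ whenever $sm<n$, which is the typical regime.

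There are two easy repairs:
\begin{itemize}
\item Enlarge the deviation for the truncated sums, for example to $\sqrt{2n\ln(snm)}$. The factor-$\sqrt{2}$ slack between $(2\alpha)^{-1/2}$ and $(2\alpha)^{-1}$ absorbs the extra term.
\item More simply, drop the grid and argue by contradiction at the single threshold $t=2\alpha$. If $\mu > \frac{1}{2\alpha}$, then $\ones(a\le 1/\mu)\le \ones(a\le 2\alpha)$, so the truncated sum is at most $2n\alpha^2+\sqrt{n\ln(snm)}$. Your Step 1 inequality then becomes $2n\alpha - 2n\alpha^2 \le 2d + 2\sqrt{n\ln(snm)} < n\alpha$, which is impossible because $\alpha<\frac14$. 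This version needs only $m+sm$ Hoeffding events, exactly as in the paper's typical-instance claim, and recovers the $1-\frac{1}{n^2}$ bound.
\end{itemize}
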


The second event $E2$ is controlled by the following lemma. 

\begin{lemma} \label{lemma:numZeros} Under the condition of Theorem \ref{thm:smallGap},
    the probability that $\forall i \in [s] {\cup \{0\}}, \|\mu^{(i)}_{\diamond}\|_{\infty} \le \frac{1}{2\alpha}$ but $\exists i \in [s]{\cup \{0\}}$ such that $\|\mu^{(i)}_{\diamond}\|_1 \ge \frac{f \log n}{2\alpha n}$ and $\big|\mathcal{N}_0^{(i)}\big| < \frac{n}{2^{m+{2}}} \cdot \min\big\{ \frac{1}{4} \normm{\mu^{(i)}_\diamond}_1, 1\big\}$ is at most ${(s+1)/n^2}$.
\end{lemma}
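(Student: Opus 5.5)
We need to prove Lemma~\ref{lemma:numZeros}. We take a union bound over the $s+1$ blocks and show that, for each fixed block $i \in [s]\cup\{0\}$, the bad event has probability at most $1/n^2$. Fix such a block. The structural input is complementary slackness for the saddle point $(\xv_\diamond,\mu_\diamond)$ of $L^{std}_{\bv_\diamond}$.

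For a scenario block $i\in[s]$, the primal entries satisfy
\begin{itemize}
\item $x^{(i)}_{\diamond j}=0$ whenever $c^{(i)}_j<\iprod{\mu^{(i)}_\diamond}{\colij}$;
\item $x^{(i)}_{\diamond j}=1$ whenever $c^{(i)}_j>\iprod{\mu^{(i)}_\diamond}{\colij}$.
\end{itemize}
For block $0$ the same holds with $\mu^{(0)}=\frac1s\sum_i\mu^{(i)}_\diamond$, since the block-$0$ reduced cost is $s\,(c^{(0)}_j-\iprod{\mu^{(0)}}{\av^{(0)}_j})$. Hence $|\mathcal{N}_0^{(i)}|$ is at least the number of columns $j$ with $c^{(i)}_j<\iprod{\mu^{(i)}_\diamond}{\colij}$.

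The difficulty is that $\mu^{(i)}_\diamond$ depends on the data. We handle this the standard way (as in \cite{dyer1989probabilistic,borst2023integrality}): prove a bound uniform over all $\mu$ in the box $\{0\le\mu\le\frac{1}{2\alpha}\ones\}$, which is exactly where the conditioning on $\|\mu^{(i)}_\diamond\|_\infty\le\frac1{2\alpha}$ places us.

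For a fixed $\mu\ge0$ with $\|\mu\|_1=t$, we lower bound $p(\mu):=\P\big(c<\iprod{\mu}{a}\big)$ for $c$ and $a$ uniform. We use the event that every coordinate of $a$ is at least $1/2$ (probability $2^{-m}$), on which $\iprod{\mu}{a}\ge t/2$. It follows that $p(\mu)\ge 2^{-m}\min\{t/2,1\}$, and hence $n\,p(\mu)\ge \frac{n}{2^{m}}\min\{\frac t2,1\}$. This is a factor of $2$ above the threshold $\frac{n}{2^{m+2}}\min\{\frac t4,1\}$ in the statement, which leaves slack for discretization and concentration.

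Uniformity over $\mu$ is handled as follows.
\begin{enumerate}
\item Build an $\epsilon$-net $\mathcal M$ of the box with $\epsilon=\Theta(\frac{\log n}{mn})$, so $|\mathcal M|\le(2mn)^{O(m)}$. Take the net points to be coordinatewise lower roundings $\mu'\le\mu$ with $\|\mu-\mu'\|_1\le t/4$; this is possible because $t\ge\frac{f\log n}{2\alpha n}$ is much larger than $m\epsilon$.
\item Then $\{j: c_j<\iprod{\mu'}{a_j}\}\subseteq\{j:c_j<\iprod{\mu}{a_j}\}$, so counting for net points suffices.
\item For each net point, the count of such $j$ is a sum of $n$ i.i.d.\ Bernoulli variables with mean at least $\frac{n}{2^m}\min\{\frac{3t}{8},1\}$. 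By a multiplicative Chernoff bound, the probability that it falls below half its mean is $\exp(-\Omega(n\,2^{-m}\min\{t,1\}))$.
\item With $t\ge \frac{f\log n}{2\alpha n}$ and $f\ge \frac{\alpha2^{m+6}\log(2mn)}{\log n}$, this failure probability is at most $\exp(-4\log(2mn))$ times a factor. Together with the assumption $n\ge 2^{m+4}(m+2)\log(2mn)$, this absorbs the net size $(2mn)^{O(m)}$ and leaves $1/n^2$.
\end{enumerate}

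One subtlety is block $0$, where the count concerns $\mu^{(0)}$. It is still a data-dependent vector in the same box, since $\|\mu^{(0)}\|_\infty\le\max_i\|\mu^{(i)}_\diamond\|_\infty$. The columns $(c^{(0)}_j,\av^{(0)}_j)$ are i.i.d.\ uniform, so the uniform-over-box argument applies verbatim.

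The main obstacle is making the net argument valid at the small scale $t\approx\frac{f\log n}{n}$. Two things must be checked there. First, the discretization must lose only a constant fraction of $t$. Second, the Chernoff exponent $n2^{-m}t$ must dominate $m\log(2mn)$. Both reduce to the stated lower bound on $f$ and the lower bound on $n$, and I would calibrate the constants in $\epsilon$ and the halving threshold to match them.
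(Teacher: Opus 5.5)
Your proposal is correct and follows essentially the paper's route: an $\ell_1$-net over the dual box, a multiplicative Chernoff bound per net point via the event $\colij\in[\frac12,1]^m$, and a union bound over the $s+1$ blocks and the net. The one real difference is how you pass from $\mu^{(i)}_\diamond$ to a net point. You round down coordinatewise and use $\colij\ge 0$, which gives the inclusion of the negative-reduced-cost sets for free. The paper instead takes any $\ell_1$-close net point and pays for the inclusion $\overline{\nrc}^{(i)}_{\mu^{(i)}}\subseteq\mathcal{N}_0^{(i)}$ by requiring reduced cost below $-\frac{\log n}{\alpha n}$.

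One bookkeeping caveat in your step 4, which the paper's write-up shares (it drops the factor $m+2$ when invoking its $f$-assumption). When $\|\mu\|_1<2$, the Chernoff exponent is of order $n2^{-m}\|\mu\|_1\ge \frac{f\log n}{2^{m+1}\alpha}$. The bound $f\ge\frac{\alpha 2^{m+6}\log(2mn)}{\log n}$ that you cite only makes this an absolute constant times $\log(2mn)$. That cannot absorb a net of size $(2mn)^m$ once $m$ exceeds a constant, and recalibrating $\epsilon$ or the halving threshold cannot fix this. You should instead invoke the other hypothesis $f\ge\frac{2^{2m+8}}{\alpha^m}$ of Theorem~\ref{thm:smallGap}. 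Using $\alpha<\frac14$, it makes the exponent exceed $(m+2)\log(2mn)$ by a wide margin.
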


With these results at hand, the desired bound on the integrality gap directly follows:

\begin{proof}[~of Theorem \ref{thm:smallGap}]
    Combining Lemmas~\ref{lemma:boundDual} and \ref{lemma:numZeros}, we obtain that the complement event $Good^c$ holds with probability at most $\frac{s+2}{n^2}$. Therefore, further using Lemma~\ref{lemma:condGap}, the probability of a large integrality gap can be upper bounded as 
    \begin{align*}
        \Pr\big(\Delta > \Omega_m(\tfrac{{sf}\log^2n}{n})\big) &= \Pr\big(\Delta > \Omega_m(\tfrac{{sf}\log^2n}{n}) \mid Good\big) \Pr(Good) + \Pr\big(\Delta > \Omega_m(\tfrac{{sf}\log^2n}{n}) \mid Good^c\big) \Pr(Good^c)  \\
        &\le {\frac{s+1}{n^2} + (s+1)\pi^f}  + \frac{{s+2}}{n^2}  = {\frac{2s + 3}{n^2}+ (s+1)\pi^{f}}. 
    \end{align*}
    This concludes the proof of the theorem. 
\end{proof}

\subsection{Proof of Lemma \ref{lemma:startBoundGap}} \label{sec:firstBound}
Recall that the fact that $(\xv_{\diamond}, \mu_{\diamond})$ is a saddle point
of $L^{\mathrm{std}}_{\bv_{\diamond}}(\xv,\mu)$ means that
\[
\OPT_{\LP}(\bv_{\diamond})
= \max_{0 \le \xv \le 1} \min_{\mu \ge 0} L^{\mathrm{std}}_{\bv_{\diamond}}(\xv,\mu)
= \max_{0 \le \xv \le 1} L^{\mathrm{std}}_{\bv_{\diamond}}(\xv,\mu_{\diamond})
= \min_{\mu \ge 0} L^{\mathrm{std}}_{\bv_{\diamond}}(\xv_{\diamond},\mu),
\]
that is, $\xv_{\diamond}$ and $\mu_{\diamond}$ are optimal primal and dual
solutions, respectively.
Since $L^{\mathrm{std}}$ is bilinear and the feasible regions are defined by box
constraints, the following complementary slackness conditions hold.



\begin{enumerate}[label={(cs.\roman*)}]
\item For all $i \in [s]$, we have $\iprods{\mu_{\diamond}^{(i)}}{\bv^{(i)}_{\diamond} - A^{(0)} \xv^{(0)}_{\diamond} - A^{(i)} \xv^{(i)}_{\diamond}} = 0$ (if we had ``$<$'' then $\OPT_{\LP}(\bv_{\diamond}) = \min_{\mu \ge 0} L^{std}_{\bv_{\diamond}}(\xv_{\diamond},\mu)$ would be $-\infty$, which is impossible, and if we had ``$>$'' then $\mu_{\diamond}$ would not be optimal) \label{cs.i}. 
 \item For every $i,j$, if $c^{(i)}_j - \iprods{\mu_{\diamond}^{(i)}}{\colij} < 0$ then $\xv_{\diamond j}^{(i)} = 0$, and if $c^{(i)}_j - \iprods{\mu_{\diamond}^{(i)}}{\colij} > 0$ then $\xv_{\diamond j}^{(i)} = 1$ (both by optimality of $\xv_{\diamond j}^{(i)}$) \label{cs.ii}.
\end{enumerate}

Recall that the integrality gap is 
$\Delta := \OPT_{\mathrm{LP}}(\bv) - \OPT(\bv).$
The LP optimum can be upper bounded using the dual vector $\mu_{\diamond}$ as
$\OPT_{\mathrm{LP}}(\bv)
\le \max_{0 \le \xv \le 1} L^{\mathrm{std}}_{\bv}(\xv,\mu_{\diamond}).$
Moreover, $\xv_{\diamond}$ remains an optimal maximizer for this problem with
right-hand side $\bv$, since \ref{cs.ii} shows that the optimal choice of $\xv$
depends only on the reduced costs and is therefore independent of the
right-hand side.
Hence,
$
\OPT_{\mathrm{LP}}(\bv)
\le L^{\mathrm{std}}_{\bv}(\xv_{\diamond},\mu_{\diamond}).
$
On the other hand, the integer optimum $\OPT(\bv)$ is lower bounded by the value
$s \langle \cv^{(0)}, \bar{\xv}^{(0)} \rangle
+ \sum_{i \in [s]} \langle \cv^{(i)}, \bar{\xv}^{(i)} \rangle$
of the feasible integral solution $\bar{\xv}$.

Combining these bounds and adding and subtracting
$\sum_{i \in [s]}
\langle \mu^{(i)}_{\diamond},
A^{(0)} \bar{\xv}^{(0)} + A^{(i)} \bar{\xv}^{(i)} \rangle$,
we obtain
\begin{align}
\Delta
\le\;
&s \cdot
\big\langle \cv^{(0)} - (\mu^{(0)}_{\diamond})^{\top} A^{(0)},
\xv^{(0)}_{\diamond} - \bar{\xv}^{(0)} \big\rangle
+ \sum_{i \in [s]}
\big\langle \cv^{(i)} - (\mu^{(i)}_{\diamond})^{\top} A^{(i)},
\xv^{(i)}_{\diamond} - \bar{\xv}^{(i)} \big\rangle \notag \\
&\quad
+ \sum_{i \in [s]}
\big\langle \mu^{(i)}_{\diamond},
\bv^{(i)} - A^{(0)} \bar{\xv}^{(0)} - A^{(i)} \bar{\xv}^{(i)} \big\rangle .
\label{eq:startBoundGap}
\end{align}
We next bound the first term in \eqref{eq:startBoundGap}. Recall that $\av^{(0)}_j$ denotes the $j$th column of the matrix $A^{(0)}$, we have
\begin{align}
    \iprod{\cv^{(0)} - (\mu^{(0)}_{\diamond})^{\top} A^{(0)}\,}{\,\xv^{(0)}_{\diamond} -\bar{\xv}^{(0)}} &= \sum_j \Big(c^{(0)}_j - \iprod{\mu_{\diamond}^{(0)}}{\av^{(0)}_j}\Big)\cdot (\xv^{(0)}_{\diamond j} - \bar{\xv}^{(0)}_j) \notag\\
    &\le  \sum_{j : \bar{\xv}^{(0)}_j > \xv^{(0)}_{\diamond j}} \Big(\langle \mu^{(0)}_{\diamond}, \av^{(0)}_j\rangle - c^{(0)}_j
 \Big),  \label{eq:gapFirstTerm}
    \end{align}
    where we justify the contribution of each term $j$ to the last inequality as follows. First, when $\xv^{(0)}_{\diamond j} = 1$, by hypothesis we also have $\bar{\xv}^{(0)}_{j} = 1$, so $\xv^{(0)}_{\diamond j} - \bar{\xv}^{(0)}_j = 0$ and the term $j$ does not contribute to the sum. Now when $\xv_{\diamond j}^{(0)} < 1$, \ref{cs.ii} discussed above gives that $c^{(0)}_j - \iprods{\mu_{\diamond}^{(0)}}{\av^{(0)}_j} \le 0$, and so swapping the differences the term becomes $(\iprods{\mu_{\diamond}^{(0)}}{\av^{(0)}_j} - c^{(0)}_j)\cdot (\bar{\xv}^{(0)}_j - \xv^{(0)}_{\diamond j}) \le (\iprods{\mu_{\diamond}^{(0)}}{\av^{(0)}_j} - c^{(0)}_j)\cdot \bar{\xv}^{(0)}_j$. When $\bar{\xv}^{(0)}_j = 0$, the $j$th term is then non-positive and can be dropped, and when $\bar{\xv}^{(0)}_j = 1$ (which is precisely the case $\bar{\xv}^{(0)}_j > \xv^{(0)}_{\diamond j}$) the $j$th term can be upper bounded as $\iprods{\mu_{\diamond}^{(0)}}{\av^{(0)}_j} - c^{(0)}_j$, as desired. 

    The same argument allows us to bound the second term in \eqref{eq:startBoundGap} analogously as    
    \begin{align}
    \iprod{c^{(i)} - (\mu^{(i)}_{\diamond})^{\top} A^{(i)}\,}{\,\xv^{(i)}_{\diamond} - \bar{\xv}^{(i)}} \le  \sum_{j : \bar{\xv}^{(i)}_j > \xv^{(i)}_{\diamond j}} \Big(\langle \mu^{(i)}_{\diamond}, \av^{(i)}_j\rangle - c^{(i)}_j
 \Big).  \label{eq:gapSecondTerm}
    \end{align}
    To bound the last term of \eqref{eq:startBoundGap}, by \ref{cs.i} discussed above we have that that related term, but with respect to $\xv_{\diamond}$, is actually 0, namely $\iprods{\mu_{\diamond}^{(i)}}{\bv^{(i)}_{\diamond} - A^{(0)} \xv^{(0)}_{\diamond} - A^{(i)} \xv^{(i)}_{\diamond}} = 0$.
    Thus, to upper bound the last term of \eqref{eq:startBoundGap} we subtract from it   $\iprods{\mu_{\diamond}^{(i)}}{\bv^{(i)}_{\diamond} - A^{(0)} \xv^{(0)}_{\diamond} - A^{(i)} \xv^{(i)}_{\diamond}}$ for all $i$ to obtain that it is at most
    \begin{align*}
    \sum_{i \in [s]} \iprod{\mu^{(i)}_{\diamond}}{\bv^{(i)} - A^{(0)} \bar{\xv}^{(0)} - A^{(i)} \bar{\xv}^{(i)}} & = \sum_{i \in [s]} \iprod{\mu^{(i)}_{\diamond}}{\bv^{(i)} - \bv^{(i)}_{\diamond} - A^{(0)} (\bar{\xv}^{(0)} - \xv^{(0)}_{\diamond}) - A^{(i)} (\bar{\xv}^{(i)} - \xv^{(i)}_{\diamond})} \\
&\le \sum_{i \in [s]} \Big\|\mu^{(i)}_{\diamond} \Big\|_1 \cdot \Big\|2d\ones - A^{(0)} (\bar{\xv}^{(0)}-\xv^{(0)}_{\diamond}) - A^{(i)} (\bar{\xv}^{(i)}-\xv^{(i)}_{\diamond})\Big\|_{\infty} \\
&\le \sum_{i \in [s]} \Big\|\mu^{(i)}_{\diamond} \Big\|_1 \cdot \Big\|d\ones - A^{(0)} (\bar{\xv}^{(0)}-\xv^{(0)}_{\diamond})\Big\|_{\infty} \\
&~~~+ \sum_{i \in [s]} \Big\|\mu^{(i)}_{\diamond} \Big\|_1 \cdot \Big\|d\ones - A^{(i)} (\bar{\xv}^{(i)}-\xv^{(i)}_{\diamond})\Big\|_{\infty}.
    \end{align*}
Employing the the previous bound and those of \eqref{eq:gapFirstTerm} and \eqref{eq:gapSecondTerm} to \eqref{eq:startBoundGap} concludes the proof of Lemma~\ref{lemma:startBoundGap}.

\subsection{Bounding the Size of the Optimal Dual (Proof of Lemma \ref{lemma:boundDual})}

\label{sec:bound_dual}

To simplify the notation, let $\xi := \frac{2}{4 \beta_{\min}-1}= \frac{1}{2\alpha} > 1$. We want to prove that with probability at least $1 - \frac{1}{n^2}$, we have $\|\mu^{(i)}_{\diamond}\|_{\infty} \le \xi$ for all blocks $i \in [s] \cup \{0\}$. We first note that it suffices to consider $i \in [s]$, that is, the case $i=0$ comes for free: this is because $\mu^{(0)}_{\diamond}$ is defined as the average $\mu^{(0)}_{\diamond}=\frac{1}{s} \sum_{i = 1}^s \mu^{(i)}_{\diamond}$ and, since norms are convex, by Jensen's inequality we have $\|\mu^{(0)}_{\diamond}\|_{\infty} \le \frac{1}{s} \sum_{i=1}^s \|\mu^{(i)}_{\diamond}\|_{\infty}$.

To prove the desired result, we say that an instance of our problem is \textbf{typical} if every row $r \in [m]$ satisfies the following:
\vspace{-2pt}
    \begin{enumerate}
    \item $ \sum_{j \in [n]} A^{(0)}_{r,j} \leq \frac{n}{2} + \sqrt{n \ln s n m}$;
    \item $\left|\left\{j \in [n]: \cv^{(i)}_j \ge \xi
    \cdot A^{(i)}_{r,j}\right\}\right| \,\le\, \frac{n}{2 \xi} + \sqrt{n \ln s n m}$,  for all blocks $i \in [s]$.
    \end{enumerate}

Notice that, for a fixed row $r \in [m]$, since the entry $A^{(0)}_{r,j}$ is uniformly distributed in $[0,1]$, in expectation we have $\E \sum_{j \in [n]} A^{(0)}_{r,j} = \frac{n}{2}$; moreover, since $\cv^{(i)}_j$ is also uniformly distributed in $[0,1]$, the probability that $\cv^{(i)}_j \ge \xi \cdot A^{(i)}_{r,j}$ can be seen to be $\frac{1}{2\xi}$ (this uses the fact $\xi \ge 1$), and so we expect $\frac{n}{2 \xi}$ of the $j$'s to satisfy this inequality. A quick application of Chernoff's bound show that the extra slack in the definition of a \textbf{typical} block is enough to guarantee that this property holds with high probability for all rows and blocks. 

\begin{claim} \label{claim:typicalBlocks}
    If $sm \geq 2 $, with probability at least $1 - \frac{1}{n^2}$ the instance is \textbf{typical}.   
\end{claim}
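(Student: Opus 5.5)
The plan is to prove each of the two defining properties of a \textbf{typical} instance separately for a fixed row (and fixed block), using Hoeffding's inequality for sums of independent bounded random variables. Then a union bound over all $m$ rows and all $s$ blocks shows that the total failure probability is at most $\frac{1}{n^2}$.

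\emph{Property 1.} Fix a row $r \in [m]$. The entries $A^{(0)}_{r,1},\ldots,A^{(0)}_{r,n}$ are i.i.d.\ uniform on $[0,1]$, so their sum has mean $\frac{n}{2}$, and each summand lies in $[0,1]$. Hoeffding's inequality gives
\begin{align*}
\P\Big(\sum_{j} A^{(0)}_{r,j} > \tfrac{n}{2} + t\Big) \le \exp\big(-2t^2/n\big).
\end{align*}
With $t = \sqrt{n \ln snm}$ this becomes $\exp(-2\ln snm) = (snm)^{-2}$.

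\emph{Property 2.} Fix a block $i \in [s]$ and a row $r$. Define the indicators $Z_j := \ones\big(c^{(i)}_j \ge \xi A^{(i)}_{r,j}\big)$; these are independent across $j$ because the columns are independent. To compute their mean, condition on $c := c^{(i)}_j$. The event $A^{(i)}_{r,j} \le c/\xi$ then has probability $c/\xi$, which is at most $1$ because $\xi \ge 1$. Averaging over $c$ uniform on $[0,1]$ gives $\E Z_j = \frac{1}{2\xi}$. Applying Hoeffding to $\sum_j Z_j$ with the same $t$ gives the failure bound $(snm)^{-2}$ for this pair $(i,r)$.

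\emph{Union bound.} There are $m$ events of the first kind and $sm$ of the second, so the total failure probability is at most
\begin{align*}
\frac{m(s+1)}{(snm)^2} = \frac{s+1}{s}\cdot\frac{1}{smn^2} \le \frac{2}{sm}\cdot\frac{1}{n^2}.
\end{align*}
Using $sm \ge 2$, the right-hand side is at most $\frac{1}{n^2}$.

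The only places needing care are the exact value of the mean, which relies on $\xi \ge 1$ so that $c/\xi \le 1$, and the final bookkeeping, where the hypothesis $sm \ge 2$ is exactly what absorbs the factor $\frac{s+1}{s}\le 2$. No step is a real obstacle.
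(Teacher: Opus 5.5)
Your proof is correct and matches the paper's argument step for step. You apply the additive Chernoff/Hoeffding bound with deviation $\sqrt{n \ln snm}$ to each row and to each (block, row) pair, using $\Pr(c \ge \xi a) = \frac{1}{2\xi}$ since $\xi \ge 1$, and then take a union bound over the $m(s+1)$ events. You also spell out the final arithmetic $\frac{s+1}{s}\cdot\frac{1}{smn^2} \le \frac{1}{n^2}$ via $sm \ge 2$, which the paper leaves implicit.
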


\begin{proof}
    Fix a row $r \in [m]$. Since $A^{(0)}_{r,j}$ is uniformly distributed in $[0,1]$, we have $\E \sum_{j \in [n]} A^{(0)}_{r,j} = \frac{n}{2}$, and so Chernoff bound (\Cref{lemma:chernoff} in \Cref{app:prob}) gives $$\Pr\bigg(\sum_{j \in [n]} A^{(0)}_{r,j} > \frac{n}{2} + \sqrt{n \ln s n m}\bigg) \,\le\, e^{-2\ln s n m} = \frac{1}{(snm)^2}.$$ Taking a union bound over all rows, Item 1 in the definition of a typical instance holds with probability at least $1- \frac{1}{(sn)^2 m}$. Also, fix a row $r \in [m]$ and a block $i \in [s]$, and let $X_j$ be the indicator that $\cv^{(i)}_j \ge \xi \cdot A^{(i)}_{r,j}$, so $\sum_{j \in [n]} X_j = |j : \cv^{(i)}_j \ge \xi \cdot A^{(i)}_{r,j}|$. Since $(\cv_j, A^{(i)}_{r,j})$ is uniformly distributed in the square $[0,1]^2$ and $\xi \ge 1$, $\Pr(X_i = 1) = \frac{1}{2 \xi}$ (i.e., the area of the triangle $[0,1]^2 \cap \{(y,x) : y \ge \xi x\}$), and again by Chernoff bound $\Pr(\sum_{j \in [n]} X_j \ge \frac{n}{2 \xi} + \sqrt{n \ln s n m}) \le \frac{1}{(snm)^2}$. Taking a union bound over all $r \in [m]$ and $i \in [s]$, Item 2 in the definition of a typical instance holds with probability at least $1 - \frac{1}{s n^2 m}$. Taking a final union bound over the two items, we conclude the proof of the claim.
\end{proof}

    Then \Cref{lemma:boundDual} follows from the following bound on the optimal dual for typical instances.  

\begin{claim} \label{claim:dualFriendly}
    If the instance is \textbf{typical}, under the assumption that $2 \sqrt{n \ln s n m} + 6\alpha m \log n < n \cdot \big(\beta_{\min} - \frac{1}{4}\big)$ ,then for all blocks $i \in [s]$ we have $\|\mu_\diamond^{(i)}\|_{\infty} \leq \xi$.
\end{claim}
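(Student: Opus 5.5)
The plan is a contradiction argument. Fix a block $i \in [s]$ and a row $r \in [m]$, and suppose $\mu^{(i)}_{\diamond,r} > \xi$. We will show that row $r$ of block $i$ must then be tight at $\xv_\diamond$, yet a typical instance cannot fill it.

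\emph{Step 1 (the row is tight).} Since $\xv_\diamond$ is feasible for the LP with right-hand side $\bv_\diamond$, every coordinate of $\bv^{(i)}_\diamond - A^{(0)}\xv^{(0)}_\diamond - A^{(i)}\xv^{(i)}_\diamond$ is nonnegative. Also $\mu^{(i)}_\diamond \ge 0$. So the inner product in \ref{cs.i} is a sum of nonnegative terms that equals zero, and each term vanishes. As $\mu^{(i)}_{\diamond,r} > 0$, this gives
\[
\big(A^{(0)}\xv^{(0)}_\diamond\big)_r + \big(A^{(i)}\xv^{(i)}_\diamond\big)_r = b^{(i)}_{\diamond,r} = 2n\beta^{(i)}_r - 2d \;\ge\; 2n\beta_{\min} - 2d .
\]

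\emph{Step 2 (few positive second-stage entries).} By \ref{cs.ii}, if $\xv^{(i)}_{\diamond j} > 0$ then $c^{(i)}_j \ge \iprods{\mu^{(i)}_\diamond}{\colij}$. All entries are nonnegative, so this is at least $\mu^{(i)}_{\diamond,r} A^{(i)}_{r,j} \ge \xi A^{(i)}_{r,j}$. Hence every $j$ with $\xv^{(i)}_{\diamond j} > 0$ lies in the set counted in Item~2 of typicality, and there are at most $\frac{n}{2\xi} + \sqrt{n \ln snm}$ of them. Since $A^{(i)}_{r,j} \le 1$ and $\xv^{(i)}_{\diamond j} \le 1$,
\[
\big(A^{(i)}\xv^{(i)}_\diamond\big)_r \le \tfrac{n}{2\xi} + \sqrt{n\ln snm} = n\alpha + \sqrt{n\ln snm}.
\]
The first-stage term is handled by Item~1 of typicality together with $\xv^{(0)}_\diamond \le \ones$:
\[
\big(A^{(0)}\xv^{(0)}_\diamond\big)_r \le \sum_j A^{(0)}_{r,j} \le \tfrac n2 + \sqrt{n\ln snm}.
\]

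\emph{Step 3 (contradiction).} Combining Steps 1 and 2 and writing $2n\beta_{\min} = \frac n2 + 2n\alpha$ gives
\[
\tfrac n2 + 2n\alpha - 2d \le \tfrac n2 + n\alpha + 2\sqrt{n\ln snm},
\qquad\text{that is,}\qquad n\alpha \le 2\sqrt{n\ln snm} + 2d .
\]
From \eqref{eq:defD}, $2d = 3\alpha m\log n\,(1 + \frac{1}{\sqrt3 n^2}) \le 6\alpha m \log n$. Therefore $n\alpha \le 2\sqrt{n\ln snm} + 6\alpha m\log n$, which contradicts the hypothesis $2\sqrt{n\ln snm} + 6\alpha m\log n < n(\beta_{\min} - \frac14) = n\alpha$. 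Hence $\mu^{(i)}_{\diamond,r} \le \xi$ for every $r$ and every $i \in [s]$. As noted before the claim, the case $i = 0$ then follows by averaging.

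The only delicate point is Step 1. Condition \ref{cs.i} is stated as an aggregate equality per block, and we need it coordinatewise; this follows from the feasibility of $\xv_\diamond$ and the nonnegativity of $\mu_\diamond$. The rest is bookkeeping of the constants.
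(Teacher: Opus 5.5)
Your proof is correct. It reaches the paper's final inequality $n\alpha \le 2\sqrt{n\ln snm} + 2d$ through complementary slackness rather than the paper's dual perturbation.

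The paper works only with the dual function. Using the closed form \eqref{eq:dualClosed} of $\LAG_{\bv_\diamond}(\mu)$, it lowers the offending coordinate of $\mu^{(\bar i)}_\diamond$ by a small $\e$. It then shows, via \eqref{eq:diffDual} and typicality, that the dual value strictly decreases, contradicting that $\mu_\diamond$ is a minimizer.

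You instead use the primal--dual pair. The coordinatewise form of \ref{cs.i} forces row $r$ of block $i$ to be tight. You correctly derive that form from the feasibility of $\xv_\diamond$ and $\mu_\diamond \ge 0$, since the paper states it only in aggregate. Then \ref{cs.ii} places the support of $\xv^{(i)}_\diamond$ inside the set controlled by Item~2 of typicality, so the tight row cannot be filled.

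The three terms in \eqref{eq:diffDual} match your three ingredients term by term:
\begin{itemize}
\item the first is your first-stage load bound;
\item the second is the right-hand side $\bv^{(\bar i)}_{\diamond 1}$;
\item the third is your support bound, since your set $\{j : \xv^{(i)}_{\diamond j} > 0\}$ lies inside the paper's set of nonnegative reduced costs.
\end{itemize}
In fact, the paper's right-hand side divided by $\e$ upper-bounds the row excess $\big(A^{(0)}\xv^{(0)} + A^{(\bar i)}\xv^{(\bar i)}\big)_1 - \bv^{(\bar i)}_{\diamond 1}$ over all Lagrangian maximizers $\xv$. You evaluate this excess at the particular maximizer $\xv_\diamond$, where it equals zero. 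So the two arguments are dual formulations of the same estimate.

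Your version avoids the closed-form dual and the small-$\e$ case analysis of the positive parts. It also makes clear which typicality item controls which part of the row. The paper's version needs only that $\mu_\diamond$ minimizes the dual function and never refers to a primal optimum. Since complementary slackness holds for every optimal primal--dual pair, this gives no stronger conclusion here.
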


\begin{proof}
   Let $\LAG_{\bv_\diamond}(\mu) := \max\limits_{0 \leq \xv \leq 1} L^{\text{std}}_{\bv_{\diamond}}(\xv,\mu).$ The fact that $\mu_{\diamond}$ is a saddle point for $L^{\text{std}}_{\bv_{\diamond}}$ means that it is an argmin of $\LAG_{\bv_\diamond}(\cdot)$. 

   By means of contradiction, assume that for some block $\bar{i} \in [s]$ we have $\|\mu^{(\bar{i})}\|_{\infty} > \xi$; without loss of generality, assume this is attained at the first coordinate, namely $\mu^{(\bar{i})}_1 > \xi$. First, notice that $\LAG_{\bv_\diamond}(\mu)$ has the the following closed form (we use $(v)^+ : = \max\{0, v\}$ to denote the positive part of a value $v$):
\begin{align}
         \LAG_{\bv_\diamond}(\mu) & = \sum_{j = 1}^n  \Big(s\cdot\cv^{(0)} - \sum_{i=1}^{s} (\mu^{(i)})^{\top} A^{(0)} \Big)^+_j + \sum_{i=1}^{s} \iprod{\mu^{(i)}}{\bv^{(i)}_{\diamond}} + \sum_{i = 1}^s \sum_{j = 1}^n \Big(\cv^{(i)} - (\mu^{(i)})^{\top} A^{(i)} \Big)^+_j, \label{eq:dualClosed}
\end{align}
    which is obtained by rewriting $L^{\text{std}}_{\bv_{\diamond}}$ (defined in equation \eqref{eq:lagStd}) as
    \begin{align*}
    L^{std}_{\bv_{\diamond}}(\xv,\mu) = \iprod{s \cdot \cv^{(0)} - \sum_{i = 1}^s (A^{(0)})^{\top} \mu^{(i)}}{\,\xv^{(0)}} + \sum_{i=1}^{s} \iprod{\mu^{(i)}}{\bv^{(i)}_{\diamond}} + \sum_{i = 1}^s \iprod{\cv^{(i)} - (A^{(i)})^{\top} \mu^{(i)}}{\xv^{(i)}}
    \end{align*}
    and taking $\max$ over $0 \le \xv \le 1$.

    To reach a contradiction, we now construct a solution $\mu_{*}$ with better value $\LAG_{\bv_\diamond}(\mu_*)$ than $\mu_{\diamond}$, contradicting its optimality: define $\mu_*$ to be the same as $\mu_{\diamond}$ except that $\mu^{(\bar{i})}_{*1} = \mu^{(\bar{i})}_{\diamond 1} - \e$, for some positive $\e$. 

    We claim that $\LAG_{\bv_{\diamond}}(\mu_*) < \LAG_{\bv_{\diamond}}(\mu_\diamond)$ for small enough $\e > 0$. The key observation is that the difference these quantities can be bounded as
    \begin{align}
    \LAG_{\bv_{\diamond}}(\mu_*) - \LAG_{\bv_{\diamond}}(\mu_\diamond) \le \sum_{j = 1}^n \e A^{(0)}_{1,j} - \e \bv^{(\bar{i})}_{\diamond 1} + \e \cdot \Big| \Big\{ j \in [n] \,:\, \Big(\cv^{(\bar{i})} - (\mu^{(\bar{i})}_{\diamond})^{\top} A^{(\bar{i})}\Big)_j \ge 0 \Big\} \Big|\,.  \label{eq:diffDual}
    \end{align}    
    which we now justify: Each of the three terms on the RHS come from the three terms on the RHS of \eqref{eq:dualClosed}, with 1) the first coming from the subadditivity of the positive part function $(\cdot)^+$ and the fact $((\mu^{(\bar{i})}_{*})^{\top} A^{(0)})_j = ((\mu^{(\bar{i})}_{\diamond})^{\top} A^{(0)})_j - \e A^{(0)}_{1,j}$; 2) the second term following directly from the definition of $\mu_{*}$; 3) the last coming from the fact that for $i \neq \bar{i}$ we do not pick up any difference between $\mu^{(i)}_{\diamond}$ and $\mu^{(i)}_{*}$ (they are the same by definition) and for $i = \bar{i}$, for small enough $\e$ because of the positive value function {$(\cdot)^+$} we only pick up a difference for $j$'s such that $(\cv^{(\bar{i})} - (\mu^{(\bar{i})}_{\diamond})^{\top} A^{(\bar{i})})_j \ge 0$, in which case we pick up a difference of $\e A^{(\bar{i})}_{1j} \le \e$.

    To further upper bound the RHS of \eqref{eq:diffDual}, we use the assumptions at hand. Since the instance is typical, we directly have $\sum_{j = 1}^n \e A^{(0)}_{1,j} \le \frac{n}{2} + \sqrt{n \ln s n m}$. Also, $-\e \bv^{(\bar{i})}_{\diamond 1} = -\e \bv^{(\bar{i})}_{1} + \e 2 d = - \e 2n \beta^{(\bar{i})}_1 + \e 2 d \le -\e 2n \beta_{\min} + \e 2d$. Finally, the non-negativity of $\mu_{\diamond}$ and $A$ and the assumption $\mu^{(\bar{i})}_{\diamond 1} > \xi$ imply that $(\cv^{(\bar{i})} - (\mu^{(\bar{i})}_{\diamond})^{\top} A^{(\bar{i})})_j \le (\cv^{(\bar{i})} - \mu^{(\bar{i})}_{\diamond 1} \cdot A^{(\bar{i})}_1)_j < (\cv^{(\bar{i})} - \xi \cdot A^{(\bar{i})}_1)_j$, and so Item 2 in the definition of a typical instance implies that $\e \cdot | \{ j \in [n] \,:\, (\cv^{(\bar{i})} - (\mu^{(\bar{i})}_{\diamond})^{\top} A^{(\bar{i})})_j \ge 0 \} | \le \epsilon (\frac{n}{2\xi}  + \sqrt{n \ln s n m}) $.

    Plugging these bounds on \eqref{eq:diffDual}, we get
    \begin{align}
    \LAG_{\bv_{\diamond}}(\mu_*) - \LAG_{\bv_{\diamond}}(\mu_\diamond) &\le \e \cdot \Big(\frac{n}{2} + \frac{n}{2\xi} - 2n\beta_{\min} + 2 \sqrt{n \ln s n m} + 2d\Big) \notag\\
    &= \e \cdot \Big(n \cdot \Big(\frac{1}{4} - \beta_{\min}\Big) + 2 \sqrt{n \ln s n m} + 2d\Big), \label{eq:lastDualBound}
    \end{align}
    where the last inequality uses the definition $\xi = \frac{2}{4 \beta_{\min} - 1}$. By the definition of $d$ in \eqref{eq:defD} and the assumption that
    $2 \sqrt{n \ln s n m} + 6\alpha m \log n < n \cdot \big(\beta_{\min} - \frac{1}{4}\big)$, \eqref{eq:lastDualBound} is strictly negative, which contradicts the optimality of $\mu_{\diamond}$. This concludes the proof of the claim. 
\end{proof}

\subsection{Number of Zeros in the Optimal Primal Solution (Proof of Lemma \ref{lemma:numZeros})} \label{sec:numZeros}

Recall that $\mathcal{N}_0^{(i)}$ denotes the set of coordinates $j$ for which the optimal primal solution $\xv^{(i)}_{\diamond j}$ equals $0$.
In this section, we prove that the probability of the following event $E$:
\vspace{-6pt}
\begin{enumerate}
    \item $\|\mu^{(i)}_{\diamond}\|_{\infty} \le \frac{1}{2\alpha}$ for all blocks $i \in [s] {\cup \{0\}}$, \vspace{-2pt}
    \item but there is a block $i \in [s] {\cup \{0\}}$ such that $\|\mu^{(i)}_{\diamond}\|_1 \ge \frac{f \log n}{2\alpha n}$ and $\big|\mathcal{N}_0^{(i)}\big| < \frac{n}{2^{m+{2}}} \cdot \min\big\{ \frac{1}{4} \normm{\mu^{(i)}_\diamond}_1, 1\big\}$ \vspace{-4pt}
\end{enumerate}
is at most ${(s+1)/n^2}$.

In order to handle the stochastic dependencies between the set $\mathcal{N}_0^{(i)}$ and the optimal dual vector $\mu_{\diamond}$, we will consider each possible value of $\mu_{\diamond}$ separately and then take a union bound over a discretized set of these possibilities to obtain the desired result.

The first observation is that if the column $(\cv^{(i)}_j, \colij)$ has negative reduced cost with respect to the optimal dual $\mu_{\diamond}$ (i.e.,
$\cv^{(i)}_j - \ip{\colij}{\mu^{(i)}_{\diamond}} < 0$), then since $\xv_{\diamond}^{(i)}$ is a saddle point of $L^{std}_{\bv_{\diamond}}$, we must have $\xv^{(i)}_{\diamond j} = 0$, and hence $j \in \mathcal{N}_0^{(i)}$.
Thus, to control the size of $\mathcal{N}_0^{(i)}$, it suffices to control the number of columns with negative reduced cost.
To this end, for a fixed dual vector $\mu^{(i)}$, define the set of columns whose reduced cost with respect to $\mu^{(i)}$ is less than $-\frac{\log n}{\alpha n}$:
\begin{align*}
    \overline{\nrc}^{(i)}_{\mu^{(i)}}
    := \left\{
        j \in [n] :
        \cv^{(i)}_j - \ip{\mu^{(i)}}{\colij}
        < - \frac{\log n}{\alpha n}
    \right\}.
\end{align*}
The additional slack introduced by considering such columns, rather than those with merely negative reduced cost, will be crucial in the sequel.
Next, we construct a discretization of the possible values of $\mu_{\diamond}$.
Recall that an $\varepsilon$-net $\Lambda$ (with respect to the norm $\|\cdot\|_1$) of a set $U \subseteq \mathbb{R}^m$ is a subset $\Lambda \subseteq U$ such that for every $u \in U$, there exists $v \in \Lambda$ satisfying $\|u - v\|_1 \le \varepsilon$.
We construct a $\frac{\log n}{\alpha n}$-net of the set of dual vectors relevant to the event $E$, namely
$\left\{
\mu \in \mathbb{R}^m_+ :
\|\mu\|_{\infty} \le \frac{1}{2\alpha},
\ \|\mu\|_1 \ge \frac{f \log n}{2\alpha n}
\right\}.$
Since the argument is standard, we defer the proof to Appendix~\ref{app:epsNet}.

\begin{claim} \label{claim:epsNet}
    There is a $\frac{\log n}{\alpha n}$-net $\Lambda$ for the set $\{ \mu \in \R^m_+ : \|\mu\|_{\infty} \le \frac{1}{2\alpha}, \|\mu\|_1 \ge \frac{f \log n}{2 \alpha n}\}$, with respect to the norm $\|\cdot\|_1$, of size at most $(\frac{mn}{\log n} + 1)^m \le (2mn)^m$.
\end{claim}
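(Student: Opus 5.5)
We will build the net as a grid, coordinate by coordinate. Set $\e := \frac{\log n}{\alpha n}$ and $\delta := \e/m = \frac{\log n}{\alpha m n}$. Let $G$ be the grid of points $\delta\cdot \mathbb{Z}_{\ge 0}$ lying in $[0,\frac{1}{2\alpha}]$. The number of grid values per coordinate is at most
\[
\Big\lfloor \frac{1/(2\alpha)}{\delta}\Big\rfloor + 1 \le \frac{mn}{2\log n} + 1 \le \frac{mn}{\log n}+1 .
\]
Let $\Lambda_0 := G^m$, so that $|\Lambda_0| \le (\frac{mn}{\log n}+1)^m$. Since $\frac{mn}{\log n}+1 \le 2mn$ whenever $\log n \ge 1$, this also gives $(2mn)^m$.

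The approximation step is coordinatewise rounding down. For any $\mu$ in $U := \{\mu \in \R^m_+ : \|\mu\|_\infty \le \frac{1}{2\alpha}, \|\mu\|_1 \ge \frac{f\log n}{2\alpha n}\}$, define $v_r := \delta \lfloor \mu_r/\delta \rfloor$. Then $v_r \in [0,\frac{1}{2\alpha}]$ and $v_r$ lies on the grid, so $v \in \Lambda_0$. Each coordinate satisfies $0 \le \mu_r - v_r < \delta$, hence $\|\mu - v\|_1 < m\delta = \e$.

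The only subtlety is the definition's requirement $\Lambda \subseteq U$, because the rounded $v$ may have $\|v\|_1$ slightly below $\frac{f\log n}{2\alpha n}$. To handle this, define $\Lambda := \Lambda_0 \cap U$ if that suffices; otherwise use the standard fix. Namely, for each $v \in \Lambda_0$ whose $\e$-ball (in $\|\cdot\|_1$) meets $U$, choose one representative $u_v \in U$ in that ball, and let $\Lambda := \{u_v\}$. This gives a $2\e$-net of size at most $|\Lambda_0|$.

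To recover exactly an $\e$-net, run the same construction with $\delta$ halved. That costs a factor $2^m$ in size, giving $(\frac{mn}{\log n}+1)^m 2^m$. This overshoots the first stated bound, so instead we round up when needed. If $\|v\|_1 < \frac{f\log n}{2\alpha n}$, replace $v_r$ by $\min\{v_r + \delta, \frac{1}{2\alpha}\}$ on the coordinates where $\mu_r > v_r$. The result still lies on the grid (rounding up to the next grid point, capped at $\frac{1}{2\alpha}$). It is within $\delta$ per coordinate of $\mu$, and it dominates $\mu$ coordinatewise except on coordinates already equal. The $\ell_1$-norm of the result is then at least $\|\mu\|_1$ minus nothing, so it lies in $U$.

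Concretely, use rounding up, $v_r := \min\{\delta\lceil \mu_r/\delta\rceil, \frac{1}{2\alpha}\}$, where we include $\frac{1}{2\alpha}$ itself as a grid value. This choice gives $v \ge \mu$ coordinatewise, which yields $\|v\|_1 \ge \|\mu\|_1 \ge \frac{f\log n}{2\alpha n}$ and $\|v\|_\infty \le \frac{1}{2\alpha}$. Thus $v \in U$ and $\|v-\mu\|_1 \le m\delta = \e$.

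The per-coordinate count is then $\lceil \frac{mn}{2\log n}\rceil + 1 \le \frac{mn}{\log n}+1$, which holds for $mn \ge 2\log n$. This is guaranteed by the paper's hypothesis $n \ge 2^{m+4}(m+2)\log(2mn)$. The main point of care is this choice of rounding direction, so that the net stays inside $U$; the rest is routine counting.
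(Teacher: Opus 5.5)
Your final construction is correct and is exactly the paper's proof. The grid has spacing $\frac{\log n}{\alpha n m}$ on $[0,\frac{1}{2\alpha}]$ with the endpoint $\frac{1}{2\alpha}$ adjoined, and you round each coordinate up so that $v \ge \mu$, which gives $\|v\|_1 \ge \|\mu\|_1$ and keeps $v$ in the set. The earlier detours (rounding down, the $2\varepsilon$-net via representatives, halving $\delta$) are unnecessary and can simply be deleted.
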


We now formally connect the newly defined sets $\overline{\nrc}^{(i)}_{\mu^{(i)}}$ and $\Lambda$ with the event of interest $E$.

\begin{claim}
    If the event $E$ holds, under the assumption that $f\geq 4$, then there is a block $i \in [s] \cup \{0\}$ and a $\mu^{(i)} \in \overline{\Lambda}$ such that $|\overline{\nrc}^{(i)}_{\mu^{(i)}}| < \frac{n}{2^{m+{2}}} \cdot \min\big\{ \frac{1}{2} \normm{\mu^{(i)}}_1, 1\big\} $.
\end{claim}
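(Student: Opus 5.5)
Under event $E$, take the block $i$ given by item 2 and write $\mu := \mu^{(i)}_\diamond$. Then $\|\mu\|_\infty \le \frac{1}{2\alpha}$ and $\|\mu\|_1 \ge \frac{f\log n}{2\alpha n}$, so $\mu$ lies in the set covered by Claim~\ref{claim:epsNet}. (Here $\overline{\Lambda}$ is read as the net $\Lambda$ from that claim.) Choose $\mu' \in \Lambda$ with $\|\mu - \mu'\|_1 \le \frac{\log n}{\alpha n}$. The goal is to show that $\mu'$ satisfies $|\overline{\nrc}^{(i)}_{\mu'}| < \frac{n}{2^{m+2}}\min\{\frac12\|\mu'\|_1,1\}$.

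The first step is the inclusion $\overline{\nrc}^{(i)}_{\mu'} \subseteq \mathcal{N}_0^{(i)}$. Take $j \in \overline{\nrc}^{(i)}_{\mu'}$. Every entry of $\colij$ lies in $[0,1]$, so Hölder's inequality gives $|\ip{\mu}{\colij} - \ip{\mu'}{\colij}| \le \|\mu-\mu'\|_1\,\|\colij\|_\infty \le \frac{\log n}{\alpha n}$. Hence
\[
c^{(i)}_j - \ip{\mu}{\colij} \le c^{(i)}_j - \ip{\mu'}{\colij} + \tfrac{\log n}{\alpha n} < 0 .
\]
By \ref{cs.ii}, $\xv^{(i)}_{\diamond j}=0$, that is, $j\in\mathcal{N}_0^{(i)}$. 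This is exactly where the slack $-\frac{\log n}{\alpha n}$ in the definition of $\overline{\nrc}$ is used. For $i=0$ the same argument goes through with $\mu^{(0)}_\diamond$, which is the average of the block duals and so obeys the same $\|\cdot\|_\infty$ bound.

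The second step compares the thresholds. From the triangle inequality, $\|\mu'\|_1 \ge \|\mu\|_1 - \frac{\log n}{\alpha n}$. Since $\|\mu\|_1 \ge \frac{f\log n}{2\alpha n}$, we have $\frac{\log n}{\alpha n} \le \frac{2}{f}\|\mu\|_1 \le \frac12\|\mu\|_1$ when $f\ge 4$. Therefore $\|\mu'\|_1 \ge \frac12\|\mu\|_1$, so $\frac12\|\mu'\|_1 \ge \frac14\|\mu\|_1$ and $\min\{\frac12\|\mu'\|_1,1\} \ge \min\{\frac14\|\mu\|_1,1\}$. Chaining the two steps,
\[
|\overline{\nrc}^{(i)}_{\mu'}| \le |\mathcal{N}_0^{(i)}| < \frac{n}{2^{m+2}}\min\Big\{\tfrac14\|\mu\|_1,1\Big\} \le \frac{n}{2^{m+2}}\min\Big\{\tfrac12\|\mu'\|_1,1\Big\}.
\]

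The main point needing care is the second step. The constant $f\ge4$ is what converts the net error into a factor-$2$ loss in $\|\cdot\|_1$, and that loss is precisely absorbed by the change from $\frac14$ to $\frac12$ in the statement. The first step is routine.
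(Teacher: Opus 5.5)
Your proof is correct and follows essentially the same route as the paper. You pick a net point $\mu'\in\Lambda$ within $\frac{\log n}{\alpha n}$ of $\mu^{(i)}_\diamond$ in $\|\cdot\|_1$, use the $-\frac{\log n}{\alpha n}$ slack together with $\colij\in[0,1]^m$ to get $\overline{\nrc}^{(i)}_{\mu'}\subseteq\mathcal{N}_0^{(i)}$, and then use $f\ge 4$ and the triangle inequality to get $\|\mu'\|_1\ge\frac12\|\mu^{(i)}_\diamond\|_1$, which converts the $\frac14$ threshold into the $\frac12$ one (your chain also keeps the strict inequality, which the paper's displayed chain writes loosely as $\le$).
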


\begin{proof}
    Suppose the event $E$ holds, and let $\bar{i}$ be a block such that $\|\mu^{(\bar{i})}_{\diamond}\|_1 \ge \frac{f \log n}{2\alpha n}$ and $\big|\mathcal{N}_0^{(\bar{i})}\big| < \frac{n}{2^{m+{2}}} \cdot \min\big\{ \frac{1}{4} \normm{\mu^{(i)}_\diamond}_1, 1\big\}$ (note in addition we have $\|\mu^{(\bar{i})}_{\diamond}\|_{\infty} \le \frac{1}{2\alpha}$). Let $\mu^{(\bar{i})}$ be a vector in the set $\Lambda$ such that $\|\mu^{(\bar{i})} - \mu^{(\bar{i})}_{\diamond}\|_1 \le \frac{\log n}{\alpha n}$. We claim that $\mu^{(\bar{i})}$ satisfies the desired properties, namely $|\overline{\nrc}^{(\bar{i})}_{\mu^{(\bar{i})}}| < \frac{n}{2^{m+{2}}} \cdot \min\big\{ \frac{1}{2} \normm{\mu^{(\bar{i})}}_1, 1\big\}$.

    To see that, we show that $\overline{\nrc}^{(\bar{i})}_{\mu^{(\bar{i})}} \subseteq \mathcal{N}_0^{(\bar{i})}$: Take $j \in \overline{\nrc}^{(\bar{i})}_{\mu^{(\bar{i})}}$; this implies
    \begin{align*}
    \cv^{(\bar{i})}_j + \frac{\log n}{\alpha n} < \ip{\av_{j}^{(\bar{i})}}{\mu^{(\bar{i})}} = \ip{\av_{j}^{(\bar{i})}}{\mu^{(\bar{i})}_{\diamond}} + \ip{\av_{j}^{(\bar{i})}}{\mu^{(\bar{i})} - \mu^{(\bar{i})}_{\diamond}} \le  \ip{\av_{j}^{(\bar{i})}}{\mu^{(\bar{i})}_{\diamond}}  +\frac{\log n}{\alpha n}, 
    \end{align*}
    the last inequality using the fact $\av_{j}^{(\bar{i})} \in [0,1]^m$; reorganizing the terms this implies that $\cv^{(\bar{i})}_j - \ip{\av_{j}^{(\bar{i})}}{\mu^{(\bar{i})}_\diamond} < 0$, and so $j \in \mathcal{N}_0^{(\bar{i})}$, giving the desired result. 

    Next, we show that $\frac{1}{2} \normm{\mu^{(\bar{i})}_\diamond}_1 \le \normm{\mu^{(\bar{i})}}_1$: using triangle inequality
    \begin{align*}
        \normm{\mu^{(\bar{i})}_\diamond}_1 = \normm{\mu^{(\bar{i})} + (\mu^{(\bar{i})}_\diamond - \mu^{(\bar{i})})}_1 \le \normm{\mu^{(\bar{i})}}_1 + \normm{\mu^{(\bar{i})}_\diamond - \mu^{(\bar{i})}}_1 \le \normm{\mu^{(\bar{i})}}_1 + \frac{\log n}{\alpha n} \le \normm{\mu^{(\bar{i})}}_1 + \frac{1}{2}\|\mu^{(\bar{i})}_{\diamond}\|_1,
    \end{align*}
    where the last inequality uses the fact $\|\mu^{(\bar{i})}_{\diamond}\|_1 \ge \frac{f \log n}{2\alpha n} \ge \frac{2 \log n}{\alpha n}$; rearranging gives the desired result. 

    Combining the previous two paragraphs we then get
    \begin{align*}
      \big|\overline{\nrc}^{(\bar{i})}_{\mu^{(\bar{i})}}\big| \,\le\, \big|\mathcal{N}_0^{(\bar{i})}\big| \,\le\,  \frac{n}{2^{m+{2}}} \cdot \min\Big\{ \frac{1}{4} \normm{\mu^{(i)}_\diamond}_1, 1\Big\}\,\le\,  \frac{n}{2^{m+{2}}} \cdot \min\Big\{ \frac{1}{2} \normm{\mu^{(\bar{i})}}_1, 1\Big\},
    \end{align*}
    thus proving the claim. 
\end{proof}

We now bound the probability that one of these sets $\overline{\nrc}^{(i)}_{\mu^{(i)}}$ is small, which together with the previous result upper bounds the probability of the event $E$ and gives the proof of Lemma  \ref{lemma:numZeros}.

\begin{claim}
Under the assumption that $f \geq \max\left\{8,\frac{\alpha 2^{m+6} \log (2mn)}{\log n}\right\}$
it holds
\begin{align*}
    \Pr\bigg(\textrm{there is $i \in [s] {\cup \{0\}}$ and $\mu^{(i)} \in \Lambda$ such that $|\overline{\nrc}^{(i)}_{\mu^{(i)}}| < \frac{n}{2^{m+{2}}} \cdot \min\bigg\{ \frac{1}{{2}} \normm{\mu^{(i)}}_1, 1\bigg\}$}  \bigg) \le \frac{s+1}{n^{2}}.
\end{align*}
\end{claim}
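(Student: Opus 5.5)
The plan is to fix a block $i \in [s]\cup\{0\}$ and a net point $\mu^{(i)} \in \Lambda$, bound the probability that $|\overline{\nrc}^{(i)}_{\mu^{(i)}}|$ is small, and then take a union bound. The net $\Lambda$ is constructed deterministically (Claim~\ref{claim:epsNet}), so it does not depend on the random data. Hence for fixed $\mu := \mu^{(i)}$ the indicators $X_j := \ones\big(\cv^{(i)}_j - \ip{\mu}{\colij} < -\frac{\log n}{\alpha n}\big)$, $j \in [n]$, are i.i.d. Bernoulli, because the columns $(\cv^{(i)}_j,\colij)$ are i.i.d. uniform in $[0,1]^{m+1}$. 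For $i = 0$ the relevant cost vector is $\cv^{(0)}$ compared against $\mu^{(0)}$, and the same argument applies verbatim. This is why we discretized rather than working with the random $\mu_\diamond$.

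The first step is a lower bound on $p := \Pr(X_j = 1) = \Pr\big(c < \ip{\mu}{\av} - \frac{\log n}{\alpha n}\big)$, where $c$ and $\av$ are uniform. Write $\|\mu\|_1 =: t$, with $\frac{f\log n}{2\alpha n} \le t$ and $\|\mu\|_\infty \le \frac{1}{2\alpha}$. Condition on the event that every $a_r \ge \frac12$, which has probability $2^{-m}$. On this event $\ip{\mu}{\av} \ge t/2$, so $p \ge 2^{-m}\min\{1,\ (t/2 - \frac{\log n}{\alpha n})^+\}$. Since $f \ge 8$, we have $\frac{\log n}{\alpha n} \le \frac{t}{4}$, which gives $p \ge 2^{-m}\min\{1, t/4\}$. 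More carefully, $p \ge 2^{-m}\min\{t/4, 1/2\}$, because $\ip{\mu}{\av}$ might exceed $1$ only partially. Either way, $\E|\overline{\nrc}^{(i)}_{\mu}| = np \ge \frac{n}{2^{m+1}}\min\{\frac{t}{2},1\}\cdot\frac12$ up to the constants, which is twice the threshold $\frac{n}{2^{m+2}}\min\{\frac12 t, 1\}$. I will tune the conditioning cube (for instance $a_r \ge \frac12$, $c$ uniform) so that $np \ge 2\tau$, where $\tau$ is the threshold.

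The second step applies a multiplicative Chernoff bound: $\Pr(\sum_j X_j < \tau) \le \Pr(\sum_j X_j < \frac12 np) \le \exp(-np/8)$. Using $np \ge 2\tau \ge \frac{n}{2^{m+2}}\min\{t,2\}\cdot\frac{1}{2}$ and $t \ge \frac{f\log n}{2\alpha n}$, the exponent is at least of order $\frac{f\log n}{\alpha 2^{m+5}}$ in the small-$t$ case, and of order $n/2^{m+5}$ otherwise. The hypothesis $f \ge \frac{\alpha 2^{m+6}\log(2mn)}{\log n}$ then yields $\exp(-np/8) \le (2mn)^{-m}\, n^{-2}$ after matching constants. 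Here the term $\log(2mn)$ is designed to beat the net size $(2mn)^m$, with the extra $n^{-2}$ coming from the slack in the constant $2^{m+6}$ versus the exponent, using $n \ge 2^{m+4}(m+2)\log(2mn)$ in the large-$t$ case.

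The last step is a union bound over the at most $(2mn)^m$ points of $\Lambda$ and the $s+1$ blocks, which gives total probability at most $(s+1)/n^2$. The main obstacle is the constant bookkeeping in the first and second steps. The lower bound on $p$ must be at least twice the threshold with the exact factor $2^{m+2}$, and the Chernoff exponent must absorb both $m\log(2mn)$ and $2\log n$. I expect the factor-of-$\alpha$ handling to be delicate: $\alpha \le \frac14$, so the lower bound $f\log n/(\alpha n)$ is larger than it looks, and this helps. I would first try the crude conditioning on $\{a_r \ge \frac12\ \forall r\}$. If the constants do not close, I would instead integrate $\Pr(c < \ip{\mu}{\av} - \eta)$ exactly, using that $\ip{\mu}{\av}$ has mean $t/2$ and is at most $t$.
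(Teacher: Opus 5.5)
Your plan is the paper's proof. You fix a block $i$ and a deterministic net point $\mu\in\Lambda$, and lower-bound $p$ through the sufficient event $\av\in[\frac12,1]^m$, $c<\frac14\|\mu\|_1$, using $f\ge 8$ to get $\eta:=\frac{\log n}{\alpha n}\le\frac14\|\mu\|_1$. You then apply multiplicative Chernoff with $\delta=\frac12$ and union-bound over $(s+1)|\Lambda|\le(s+1)(2mn)^m$ events. Your first step closes without hedging. Write $t:=\|\mu\|_1$ and $\tau:=\frac{n}{2^{m+2}}\min\{\frac t2,1\}$. Then $p\ge 2^{-m}\min\{\frac t4,1\}$, and even your weaker $2^{-m}\min\{\frac t4,\frac12\}$, gives $np\ge 2\tau$ exactly.

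The step you defer to ``matching constants'' does not close under the stated hypothesis.

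\begin{itemize}
\item \textbf{The exponent is too small.} For $t<2$ you get $\exp(-np/8)\le\exp(-\tau/4)=\exp\big(-\frac{nt}{2^{m+5}}\big)\le\exp\big(-\frac{f\log n}{2^{m+6}\alpha}\big)$. The hypothesis $f\ge\frac{\alpha 2^{m+6}\log(2mn)}{\log n}$ only makes this exponent $\ge\log(2mn)$.
\item \textbf{The union bound fails.} Your per-point bound is therefore only about $(2mn)^{-1}$. That cannot pay for $|\Lambda|\approx(2mn)^m$ and still leave $n^{-2}$; you need an exponent of about $(m+2)\log(2mn)$. The slack you appeal to between $2^{m+6}$ and your exponent is at most a factor $2$, not $m+2$.
\item \textbf{The paper has the same slip.} Its proof asserts $\frac{f\log n}{2\alpha n}\ge\frac{2^{m+5}(m+2)\log(2mn)}{n}$, which actually requires $f\ge\frac{\alpha 2^{m+6}(m+2)\log(2mn)}{\log n}$.
\end{itemize}

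Either of two repairs works.

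\begin{itemize}
\item \textbf{Strengthen the hypothesis on $f$ by the factor $m+2$.} This costs nothing downstream. In Theorem~\ref{thm:smallGap}, the condition $f\ge 2^{2m+8}/\alpha^m$ already implies the stronger bound once $n\ge 2m$, since $\log(2mn)\le 2\log n$ and $2^{m+1}\ge\alpha^{m+1}(m+2)$.
\item \textbf{Use your own fallback.} For $t\le 1$ you have $\ip{\mu}{\av}\le 1$, so $p=\E\big[(\ip{\mu}{\av}-\eta)^+\big]\ge\frac t2-\eta\ge\frac t4$. This removes the $2^{-m}$ loss and makes $\tau$ at most a $2^{-(m+1)}$ fraction of the mean. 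Chernoff with $\delta=1-2^{-(m+1)}\ge\frac34$ then gives exponent $\ge\frac{9nt}{128}\ge\frac94\,2^m\log(2mn)\ge(m+2)\log(2mn)$ under the stated hypothesis. For $t>1$ the exponent is $\Omega_m(n)$, and for $t\ge 2$ your use of $n\ge 2^{m+4}(m+2)\log(2mn)$ is correct.
\end{itemize}
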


\begin{proof}    
    For each $j$, notice that whenever $\colij \in \big[\frac{1}{2}, 1\big]^m$ and $\cv_j^{(i)} \in \big[0,\frac{1}{4} \normm{\mu^{(i)}}_1\big)$, we have
    \begin{align*}
        \cv^{(i)}_j - \ip{\colij}{\mu^{(i)}} \,\le\, \cv^{(i)}_j - \frac{1}{2} \normm{\mu^{(i)}}_1 \,<\, -\frac{1}{4} \normm{\mu^{(i)}}_1 \,\le\, -\frac{\log n}{\alpha n},
    \end{align*}
where the last inequality uses the fact $\normm{\mu^{(i)}}_1 \ge \frac{f \log n}{2 \alpha n} \ge \frac{4 \log n}{\alpha n}$ since $\mu^{(i)} \in \Lambda$. Thus, when this happens, the index $j$ belongs to the set $\overline{\nrc}^{(i)}_{\mu^{(i)}}$. Since $\cv_j^{(i)}$ and $\colij$ have entries uniformly distributed in $[0,1]$, this observation implies that $j$ belongs to $\overline{\nrc}^{(i)}_{\mu^{(i)}}$ with probability at least $(\frac{1}{2})^m \cdot \min\{ \frac{1}{{4}} \normm{\mu^{(i)}}_1, 1\}$. 

Therefore, the expected size of this set, $\E\, |\overline{\nrc}^{(i)}_{\mu^{(i)}}|$, is at least $n \cdot (\frac{1}{2})^m \cdot \min\{ \frac{1}{{4}} \normm{\mu^{(i)}}_1, 1\}$, and further applying the multiplicative Chernoff bound (Lemma \ref{lemma:chernoff}) over the indicator that each $j \in \overline{\nrc}^{(i)}_{\mu^{(i)}}$ we get
%
\begin{align*}
    \Pr\bigg( |\overline{\nrc}^{(i)}_{\mu^{(i)}}| < \frac{n}{2^{m+1}} \cdot \min\bigg\{ \frac{1}{{4}} \normm{\mu^{(i)}}_1, 1\bigg\} \bigg) \le \exp\bigg(- \frac{n}{2^{m+4}} \cdot \min\bigg\{  \frac{1}{{2}} \normm{\mu^{(i)}}_1, 1\bigg\}\bigg) \le \frac{1}{(2mn)^{m+2}},
\end{align*}
where the last inequality uses again that $\|\mu^{(i)}\|_1 \ge \frac{f\log n}{2 \alpha n}\ge \frac{2^{m+{5}} (m+{2}) \log (2mn)}{n}$ since $\mu^{(i)} \in \Lambda$ and the assumption of $f$.

This implies that 
\begin{align*}
    \Pr\bigg( |\overline{\nrc}^{(i)}_{\mu^{(i)}}| < \frac{n}{2^{m+{2}}} \cdot \min\bigg\{ \frac{1}{{2}} \normm{\mu^{(i)}}_1, 1\bigg\} \bigg) & \leq \Pr\bigg( |\overline{\nrc}^{(i)}_{\mu^{(i)}}| < \frac{n}{2^{m+1}} \cdot \min\bigg\{ \frac{1}{{4}} \normm{\mu^{(i)}}_1, 1\bigg\} \bigg) \leq \frac{1}{(2mn)^{m+{2}}}
\end{align*}

Finally, taking a union bound over all blocks $i \in [s]$ and all vectors $\mu^{(i)} \in \Lambda$, we obtain 
\begin{align*}
    &\Pr\bigg(\textrm{there is $i \in [s] {\cup \{0\}} $ and $\mu^{(i)} \in \Lambda$ such that $|\overline{\nrc}^{(i)}_{\mu^{(i)}}| < \frac{n}{2^{m+{2}}} \cdot \min\bigg\{ \frac{1}{{2}} \normm{\mu^{(i)}}_1, 1\bigg\}$}  \bigg) \\
    &\le     \sum_{i \in [s] {\cup \{0\}}} \sum_{\mu^{(i)} \in \Lambda} \Pr\bigg(|\overline{\nrc}^{(i)}_{\mu^{(i)}}| < \frac{n}{2^{m+{2}}} \cdot \min\bigg\{ \frac{1}{{2}} \normm{\mu^{(i)}}_1, 1\bigg\} \bigg)  \le ({s+1}) \cdot |\Lambda| \cdot \frac{1}{(2mn)^{m+{2}}} \le \frac{s+1}{n^2}.
\end{align*}
This concludes the proof of the claim. 
\end{proof}


\subsection{Discrepancy Lemma} \label{sec:prelim}


We next introduce a discrepancy lemma due to Dyer and Frieze, which is central
to the proof of the integrality gap bound. We present a slightly
simplified version of the result that is cleaner and sufficient for our
purposes.
We also note that this discrepancy lemma may be further improved under
alternative assumptions on the stochastic model using more sophisticated
techniques; for related developments, we refer the reader
to~\cite{borst2023integrality,borst2023integrality_discrepancy}.
\begin{lem}
    \cite{dyer1989probabilistic}
    \label{lem_discrepancy}
    Fix constants $a, M, \e > 0$. Then for $k$ being at least a sufficiently large integer (depending on $a, \alpha, M$) the following holds.
    
Let $Y^1, \ldots, Y^{2k}$ be $2k$ random vectors in $[-a,a]^m$ where all the coordinates, across all vectors, are independent and identically distributed. Further assume that each coordinate $Y^j_i$ satisfies $\E Y^j_i = 0$ and $\mathrm{Var}(Y^j_i) = 1$ and has a continuous density $g$ satisfying $g(x) \leq M$. For every deterministic vector $\vv \in \R^m$ satisfying $\|\vv\|_{\infty} \le k^{1/2 - \e}$, we have 
\begin{align*}
\P\left( \exists K \subseteq \{1, 2, \ldots, 2k\} : |K| = k,\ \bigg\|\sum_{j \in K} Y^j - \vv\bigg\|_{\infty} \le \frac{3k}{4^{k/m}} \right) 
\,\geq\, \frac{1}{2^{m+2}}.
\end{align*}
\end{lem}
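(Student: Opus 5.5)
The plan is a second-moment (Paley--Zygmund) argument on the number of good balanced subsets. Write $\delta := \frac{3k}{4^{k/m}}$, $B := \{y \in \R^m : \|y - \vv\|_\infty \le \delta\}$, and for $K \subseteq [2k]$ put $S_K := \sum_{j \in K} Y^j$. Let
\[
Z := \big|\{K \subseteq [2k] : |K| = k,\ S_K \in B\}\big|.
\]
The event in the statement is exactly $\{Z > 0\}$. By Paley--Zygmund (Cauchy--Schwarz) we have $\P(Z>0) \ge (\E Z)^2/\E Z^2$, so it suffices to show $\E Z^2 \le 2^{m+2} (\E Z)^2$ once $k$ is large enough.

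\textbf{First moment.} By exchangeability, $\E Z = \binom{2k}{k}\, \P(S_{[k]} \in B)$. The coordinates are i.i.d.\ with mean $0$, variance $1$, bounded support, and a bounded continuous density. Hence the uniform local limit theorem for densities applies to each coordinate of $S_{[k]}$: its density is $(2\pi k)^{-1/2} e^{-x^2/2k} + o(k^{-1/2})$ uniformly in $x$.

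Since $\|\vv\|_\infty \le k^{1/2-\e}$, every coordinate of $\vv$ is $o(\sqrt{k})$ in the Gaussian scale, up to a factor $e^{-O(k^{-2\e}\cdot k)}$. I need to be careful here: $\|\vv\|^2/2k$ can be as large as $m k^{-2\e}/2$, which tends to zero, so the exponential factor is $1-o(1)$. Because $\delta \ll \sqrt{k}$, the box $B$ is tiny, and I get
\[
\P(S_{[k]} \in B) = (1+o(1))\,(2\delta)^m (2\pi k)^{-m/2}.
\]
Combining this with $\binom{2k}{k} = (1+o(1))\,4^k/\sqrt{\pi k}$ and $\delta^m = (3k)^m 4^{-k}$ gives $\E Z = \Theta_m(k^{m/2 - 1/2})$, which tends to infinity. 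This growth is what makes the second-moment method viable.

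\textbf{Second moment.} Expand $\E Z^2 = \sum_{K,K'} \P(S_K \in B,\ S_{K'} \in B)$ and group the pairs by overlap $t = |K \cap K'|$. There are $\binom{2k}{k}\binom{k}{t}\binom{k}{k-t}$ such pairs. For a fixed pair, write $S_K = U + V$ and $S_{K'} = U + V'$, where $U$ is the sum over $K \cap K'$ and $V, V'$ are independent sums of $u := k - t$ vectors each. I then split into ranges of $u$.

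\emph{Range (a): $u \ge \eta k$ for a small fixed $\eta$.} Apply the local limit theorem to the pair $(S_K, S_{K'})$. In each coordinate the correlation is $\rho = t/k$, so the joint probability is $(1+o(1))\,\P(S_K\in B)^2\,(1-\rho^2)^{-m/2}$. Summing against the hypergeometric weights $\binom{k}{t}\binom{k}{k-t}/\binom{2k}{k}$, which concentrate at $t = k/2 \pm O(\sqrt{k})$ where $(1-\rho^2)^{-m/2} \approx (4/3)^{m/2}$, this range contributes at most $(4/3)^{m/2}(1+o(1))(\E Z)^2 \le 2^{m+1} (\E Z)^2$.

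\emph{Range (b): $1 \le u < \eta k$.} Condition on $U$ and use only the crude bound that $V'$ has density at most $M^m$ coordinatewise (the bounded density is preserved under convolution). This gives
\[
\P(S_K \in B,\ S_{K'} \in B) \le \P(S_K \in B)\cdot (4\delta M)^m .
\]
The contribution relative to $(\E Z)^2$ is then at most
\[
\frac{\binom{k}{u}^2 (4\delta M)^m}{\binom{2k}{k}\,\P(S_{[k]}\in B)} \;\approx\; \binom{k}{u}^2\, 4^{-k}\,\mathrm{poly}(k).
\]
Summed over $u < \eta k$, this is at most $4^{(2H(\eta)-1)k}\,\mathrm{poly}(k)$, where $H$ is the binary entropy in base $4$ suitably normalized. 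This tends to $0$ once $\eta$ is small enough.

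\emph{Range (c): $u = 0$.} This is the diagonal, contributing $\E Z = o((\E Z)^2)$.

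Adding the three ranges gives $\E Z^2 \le 2^{m+2}(\E Z)^2$ for $k$ large, and Paley--Zygmund finishes the proof.

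The main obstacle is range (a). There I need a \emph{bivariate} local limit theorem for $(S_K, S_{K'})$ that is uniform over overlaps $t \le (1-\eta)k$ and over targets $\vv$ with $\|\vv\|_\infty \le k^{1/2-\e}$. It must also resolve boxes of side $\delta$, which is exponentially small compared to $\sqrt{k}$. Such local resolution is exactly why bounded continuous densities are assumed.

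I would first prove this by conditioning on $U$, which reduces everything to one-dimensional density local limit theorems for $V$ and $V'$. Each of these is applied uniformly over its argument, and the resulting expression is then integrated against the density of $U$. When $t$ is small, so that $U$ has few terms, I would instead use only the bound $M^m$ on the density of $U$. This last case needs separate bookkeeping.
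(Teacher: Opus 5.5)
The paper gives no proof of this lemma; it is imported from Dyer and Frieze, so there is no in-paper argument to compare yours with. Your Paley--Zygmund argument on the number $Z$ of $k$-subsets with $S_K\in B$, with both moments evaluated through the local limit theorem for densities, is a sound self-contained proof. It also shows where each hypothesis enters. The bounded density yields a uniform local limit theorem at the exponentially small scale $\delta=3k\,4^{-k/m}$. The condition $\|\vv\|_\infty\le k^{1/2-\e}$ makes the Gaussian factor $e^{-\|\vv\|_2^2/2k}\ge e^{-mk^{-2\e}/2}=1-o(1)$; your intermediate factor $e^{-O(k^{-2\e}\cdot k)}$ should read $e^{-O(k^{-2\e})}$. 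Finally, a second-moment argument only delivers a constant success probability, which is why the paper must amplify over the $f'$ independent groups $S_t$.

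Three points need repair, none of them fatal.

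\emph{The case $m=1$.} $\E Z=\Theta_m(k^{(m-1)/2})$ does not diverge when $m=1$. There $\E Z\to 3\sqrt{2}/\pi\approx 1.35$, so the diagonal range (c) contributes $1/\E Z\approx 0.74$ relative to $(\E Z)^2$, not $o(1)$. The budget $2^{m+2}=8$ absorbs this, but your accounting must keep the term $1/\E Z$.

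\emph{Range (a).} The bivariate local limit theorem you single out as the main obstacle is not needed for the stated constant. Treat range (a) exactly like range (b), via $\P(S_K\in B,\,S_{K'}\in B)\le \P(S_K\in B)\cdot\sup_w \P(V'\in B-w)$. The only change is to replace the crude bound $M^m$ on the density of $V'$ by the uniform local-limit bound $\sup_x p_{V'}(x)\le (1+o(1))(2\pi u)^{-m/2}$, which holds uniformly over $u\ge \eta k$. Each overlap then contributes a ratio of at most $(1+o(1))(k/u)^{m/2}\le (1+o(1))\eta^{-m/2}$. Since the hypergeometric weights $\binom{k}{t}\binom{k}{k-t}/\binom{2k}{k}$ concentrate at $u=k/2\pm O(\sqrt{k})$, range (a) contributes at most $2^{m/2}+o(1)$. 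Altogether $\E Z^2/(\E Z)^2\le 2^{m/2}+1/\E Z+o(1)<2^{m+2}$ for every $m\ge 1$, with no need to control the law of $U$ or the correlation $\rho$.

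\emph{Dependence of the threshold.} As written, your threshold on $k$ depends on $g$ through the rate in the local limit theorem. The statement asserts dependence only on $a$, $M$, $\e$ (and $m$). Closing this requires the local limit theorem to hold uniformly over densities on $[-a,a]$ with unit variance bounded by $M$. This is true, since such densities satisfy $\sup_{|\theta|\ge c}|\hat g(\theta)|\le 1-\gamma(a,M,c)<1$, but it should be said. For the paper's application, with a single uniform distribution, it does not matter.
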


\subsection{Existence of Good Rounding (Proof of Lemma \ref{lemma:condGap})} \label{sec:goodRounding}

Recall the definition of the event $Good$: it occurs whenever both of the
following conditions hold:
\begin{enumerate}
    \item For every block $i \in [s] \cup \{0\}$, we have
    $\big\|\mu_{\diamond}^{(i)}\big\|_{\infty} \leq \frac{1}{2\alpha}$.
    \item For every block $i \in [s] \cup \{0\}$, if
    $\big\|\mu_{\diamond}^{(i)}\big\|_1 \ge \frac{f \cdot \log n}{2 \alpha n}$,
    then
    $\big|\mathcal{N}_0^{(i)}\big|
        \geq \frac{n}{2^{m+2}} \cdot
        \min\Big\{ \tfrac{1}{4}\normm{\mu^{(i)}_\diamond}_1, 1\Big\}.$
\end{enumerate}

Our goal is to prove that, conditioned on the event $Good$, with probability at
least $1 - \frac{s+1}{n^2} - (s+1)\pi^f$ we obtain a small integrality gap,
namely,$
\Delta \le O_m\!\left(\frac{s f \log^2 n}{n}\right).$
Recall that Lemma~\ref{lemma:startBoundGap} states that for any feasible solution
$\bar{\xv}$ to the original (integer) problem~\eqref{eq:IP} that preserves all
$1$-entries of $\xv^{(i)}_{\diamond}$ (i.e., if
$\xv^{(i)}_{\diamond j} = 1$ then $\bar{\xv}^{(i)}_j = 1$), the integrality gap
$\Delta$ can be bounded by
\begin{align}
    \Delta \leq s \cdot \Delta^{(0)}(\bar{\xv}^{(0)}) + \sum_{i=1}^{s} \Delta^{(i)}(\bar{\xv}^{(i)}),
    \label{eq:deltaRecap}
\end{align}
where
\begin{align}
\Delta^{(i)}(\bar{\xv}^{(i)}) =
\big\|\mu^{(i)}_{\diamond}\big\|_1 \cdot
\big\| A^{(i)} (\xv^{(i)}_{\diamond} - \bar{\xv}^{(i)}) + d \ones\big\|_{\infty}
\;+\;
\sum_{j : \bar{\xv}^{(i)}_j > \xv^{(i)}_{\diamond j}}
\Big(\langle \mu^{(i)}_{\diamond}, \av^{(i)}_j\rangle - c^{(i)}_j\Big).
\label{eq:delta_i}
\end{align}

Thus, it suffices to show that under $Good$ there exists a rounding $\bar{\xv}$
such that all the quantities $\Delta^{(i)}(\bar{\xv}^{(i)})$ are small.
To this end, we further refine the conditioning on $Good$.
Specifically, we will further condition on (i) the set of coordinates where the optimal
primal Lagrangian solution $\xv_{\diamond}$ equals $0$, and (ii) the exact data
(i.e., $\cv^{(i)}_j$ and $A^{(i),j}$) on the remaining coordinates.
As we will see later (Lemma~\ref{lem:cond_indp}), this conditioning fixes the
optimal dual vector $\mu_{\diamond}$, but leaves the columns on the nonzero
coordinates independent and uniformly distributed, subject only to having
nonpositive reduced cost.
This leaves enough randomness for us to find the desired rounding with high
probability by using some of these columns.
To formalize this, let
$
\cN_0 := \{(i,j) : \xv^{(i)}_{\diamond j} = 0\}$
be the set of coordinates where the optimal primal Lagrangian solution equals
$0$.

\begin{lemma} \label{lemma:mainDeltai}
Consider a conditioning of the set $\mathcal{N}_0$ and of the column data
$((\cv^{(i)}_j, \colij))_{(i,j) \notin \cN_0}$ in the complement of $\cN_0$ that belongs to the event $Good$. Under the condition of Theorem \ref{thm:smallGap},
then with (conditional) probability at least $1 - {\frac{(s+1)}{n^2}-(s+1) \pi^f}$ there exists an integer solution $\bar{\xv} \in \{0,1\}^{n \cdot (s+1)}$ that:\footnote{Actually because of Lemma~\ref{lem:cond_indp}, this result does not hold for all the conditionings of the set $\mathcal{N}_0$ and $((\cv^{(i)}_j, A^{(i)}_j))_{(i,j) \notin \cN_0}$, there is a set of them of measure 0 where the result may not hold. Since such set of measure 0 will not affect the use of this lemma, we chose to not include this subtlety in the statement.} 
\vspace{-8pt}
\begin{enumerate}
    \item Keeps all $1$-entries of $\xv^{(i)}_{\diamond}$ ;\vspace{-2pt}
    \item Is feasible for \eqref{eq:IP} \vspace{-4pt};
    \item Has $\Delta^{(i)}(\bar{\xv}^{(i)}) \leq O_m(\frac{{f}\log^2 n}{n})$ for all $i \in [s] \cup \{0\}$;
\end{enumerate} 
{where $\pi :=  \big(1 - \frac{1}{2^{m+2}}\big)^{\frac{\alpha^m}{2^{m+6}m}} \in (0,1)$.}
\end{lemma}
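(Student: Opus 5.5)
The plan is to build $\bar{\xv}$ block by block, following the single-block construction of \cite{dyer1989probabilistic}. The key input is the conditional structure from Lemma~\ref{lem:cond_indp}: once $\cN_0$ and the columns outside $\cN_0$ are fixed, $\mu_\diamond$ (hence every $\mu^{(i)}_\diamond$, including $\mu^{(0)}_\diamond$) is fixed. Moreover, the columns $(\cv^{(i)}_j,\colij)$ with $(i,j)\in\cN_0$ are independent and uniform on $\{(c,\av)\in[0,1]^{m+1} : c-\iprod{\mu^{(i)}_\diamond}{\av}\le 0\}$. Since these columns live in disjoint blocks, the per-block random experiments below are independent, and a union bound over the $s+1$ blocks gives the factor $(s+1)$ in the failure probability.

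For each block $i\in[s]\cup\{0\}$, I first round down every fractional entry of $\xv^{(i)}_\diamond$. By (cs.ii), fractional entries can only occur at columns with zero reduced cost, i.e.\ columns lying on the hyperplane $\{c=\iprod{\mu^{(i)}_\diamond}{\av}\}$ in $\R^{m+1}$. For continuous data (off a measure-zero set, cf.\ the footnote), at most $m$ columns lie on this hyperplane. So rounding down changes $A^{(i)}\xv^{(i)}$ by at most $m$ in each row, and it keeps all $1$-entries, which gives Item 1.

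I then add a set $K_i\subseteq\mathcal N_0^{(i)}$ of $k=\Theta_m(\log n)$ columns. The goal is for the total change $A^{(i)}(\bar{\xv}^{(i)}-\xv^{(i)}_\diamond)$ to lie within $O(1/n^2)$ of a target vector slightly below $d\ones$; the factor $(1+\frac{1}{\sqrt3 n^2})$ in \eqref{eq:defD} absorbs this error. With this choice the slack term in \eqref{eq:delta_i} is $O(1/n^2)$. Feasibility (Item 2) then follows by adding the two block contributions: block $0$ and block $i$ each use at most $d$ of the $2d$ slack built into $\bv_\diamond=\bv-2d\ones$. For the reduced-cost term, I choose the candidates for $K_i$ only among columns whose reduced cost lies in $[-\delta_i,0]$, with $\delta_i\approx \frac{f\log n}{n}\cdot\frac{1}{\min\{\|\mu^{(i)}_\diamond\|_1,1\}}$ (suitably scaled). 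Then the reduced-cost sum is at most $k\delta_i$, and the slack term carries the weight $\|\mu^{(i)}_\diamond\|_1\le m/(2\alpha)$ (event $Good$, item~1). Together these give $\Delta^{(i)}=O_m(f\log^2 n/n)$, which is Item 3.

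I split into two cases. If $\|\mu^{(i)}_\diamond\|_1<\frac{f\log n}{2\alpha n}$, no careful matching is needed. Any choice of columns with $\|A^{(i)}(\xv_\diamond-\bar{\xv})+d\ones\|_\infty=O_m(\log n)$ already gives a slack contribution of $O_m(f\log^2 n/n)$. Each added column has reduced cost at most $\|\mu^{(i)}_\diamond\|_1$, so $k=O(\log n)$ added columns also cost $O(f\log^2n/n)$. Otherwise, item~2 of $Good$ gives $|\mathcal N_0^{(i)}|\gtrsim n\min\{\|\mu^{(i)}_\diamond\|_1,1\}/2^{m+2}$. The reduced cost of a uniform column conditioned on being nonpositive falls in $[-\delta_i,0]$ with probability $\gtrsim \delta_i/\|\mu^{(i)}_\diamond\|_1$. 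A Chernoff bound (failure probability $1/n^2$ per block) then yields at least $\Omega(f)\cdot 2k$ candidate columns. I partition these into $\Omega(f)$ disjoint groups of $2k$ and apply Lemma~\ref{lem_discrepancy} to each group. Each group independently succeeds with probability $\ge 2^{-(m+2)}$, so all groups fail with probability at most $\pi^f$, matching $\pi=(1-2^{-(m+2)})^{\alpha^m/(2^{m+6}m)}$. With $k\ge m\log n$, the discrepancy error $3k/4^{k/m}$ is below $1/n^2$.

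The main obstacle is meeting the hypotheses of Lemma~\ref{lem_discrepancy}. The conditioned columns do not have independent, identically distributed, centered coordinates, because the constraint $c-\iprod{\mu}{\av}\in[-\delta_i,0]$ couples them. I would first restrict to a sub-box, say $\av\in[\tfrac12,1]^m$ together with a thin band in $c$ defined through $\iprod{\mu}{\av}$; then I would argue, as Dyer–Frieze do, that the $\av$-coordinates of columns in this region are i.i.d.\ with bounded density, and center and scale them. The deterministic shift is $\vv=(\text{target}-k\,\E\av)/\sigma$ plus the rounding-down correction. The requirement $\|\vv\|_\infty\le k^{1/2-\e}$ must then be met by tuning $k$ so that $k\,\E\av_r\approx d$ in every row. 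I expect the precise choice of $d=\tfrac{3\alpha m\log n}{2}$, together with the assumptions $\frac{2\sqrt3 m}{\alpha}\le(m\log n)^{1/4}$ and $f\ge 2^{2m+8}/\alpha^m$, to be exactly what makes this work. This is the step I would verify most carefully.
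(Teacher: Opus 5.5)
Your architecture matches the paper's. You round down, take $T^{(i)}=\emptyset$ in the small-$\|\mu^{(i)}_\diamond\|_1$ case, and otherwise use Lemma~\ref{lem:cond_indp} plus Chernoff to find many columns of $\cN^{(i)}_0$ in a thin band of near-zero reduced cost. You then split them into $\Theta(f)$ groups of size $2k$, apply Lemma~\ref{lem_discrepancy} per group, boost to $\pi^f$, and union bound over blocks. Your general-position argument for at most $m$ fractional entries per block is a clean alternative to Appendix~\ref{app:fractional}. It works because, almost surely, no $m+1$ columns of a block lie on a hyperplane through the origin, and by (cs.ii) the fractional columns lie on $\{c=\iprod{\mu^{(i)}_\diamond}{\av}\}$.

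However, the step you defer is where the proof lives, and the box you suggest cannot work. The parameter $d$ is fixed before $\mu_\diamond$ is known. The $k$ added columns must sum to within $O(1/n)$ of $\zv^{(i)}\in[d,d+m]^m$. So a box with i.i.d.\ coordinates of mean $\bar a$ and standard deviation $\sigma$ must satisfy two requirements:
\begin{itemize}
\item mean matching: roughly $\|\zv^{(i)}-k\bar a\ones\|_\infty/\sigma\le k^{1/2-\e}$;
\item accuracy: the guaranteed error $\sigma\cdot 3k/4^{k/m}$ must be small, which needs $k\gtrsim\frac12 m\log n$.
\end{itemize}
For $[\frac12,1]^m$ these conflict. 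Mean matching forces $k\approx\frac{4d}{3}\approx 2\alpha m\log n$. The lemma then only guarantees accuracy $\Theta(\log n\cdot n^{-4\alpha})$ with $4\alpha<1$. So the slack term $\|\mu^{(i)}_\diamond\|_1\cdot\Theta(\log n\cdot n^{-4\alpha})$ is polynomially larger than $f\log^2 n/n$ when $\|\mu^{(i)}_\diamond\|_1=\Theta(1)$ and $f$ is polylogarithmic. With your stated $k\ge m\log n$ instead, every $k$-subset sums to at least $\frac{m\log n}{2}$ in each row. This exceeds $d+m<\frac{3}{8}m\log n\,(1+o(1))+m$, so the target is never hit.

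The missing idea is that $d$ in \eqref{eq:defD} was reverse-engineered from the box. Take the box $[\alpha,2\alpha]^m$ (mean $\frac{3\alpha}{2}$, standard deviation $\frac{\alpha}{2\sqrt3}$) and $k=m\log n$. Then $k\cdot\frac{3\alpha}{2}+\theta=d$ exactly, with $\theta=\frac{\alpha}{2\sqrt3}\cdot\frac{3k}{4^{k/m}}$, and $4^{k/m}=n^2$. Now $\|\vv\|_\infty\le\frac{2\sqrt3 m}{\alpha}$ comes only from the rounding correction, which is exactly what the hypothesis $\frac{2\sqrt3 m}{\alpha}\le(m\log n)^{1/4}$ controls.

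Your band width is also wrong. Take $\delta_i\approx\frac{f\log n}{n\min\{\|\mu^{(i)}_\diamond\|_1,1\}}$ at the low end of Case 2, where $\|\mu^{(i)}_\diamond\|_1\approx\frac{f\log n}{2\alpha n}$. Then $\delta_i\approx 2\alpha$, so the reduced-cost sum $k\delta_i$ is $\Theta(\log n)$, not $O_m(f\log^2 n/n)$. It also destroys exact uniformity of the $\av$-marginal on the box. That uniformity needs $[\iprod{\mu^{(i)}_\diamond}{\av}-\delta,\iprod{\mu^{(i)}_\diamond}{\av}]\subseteq[0,1]$ for every $\av$ in the box. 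This is where the lower end $\alpha$ of the box and the lower bound on $\|\mu^{(i)}_\diamond\|_1$ enter.

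Use instead a width $\delta=\Theta(f\log n/n)$ independent of $\mu$, small enough that $\delta\le\alpha\|\mu^{(i)}_\diamond\|_1$. You still get $\Omega_m(f\log n)$ candidates, because the factor $\min\{\|\mu^{(i)}_\diamond\|_1,1\}$ in the $Good$ lower bound on $|\cN^{(i)}_0|$ cancels against $\vol(\nrc^{(i)}_{\mu^{(i)}_\diamond})\le\min\{\|\mu^{(i)}_\diamond\|_1,1\}$.

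Finally, in Case 1 take $T^{(i)}=\emptyset$ explicitly. A bound on $\|A^{(i)}(\xv^{(i)}_\diamond-\bar{\xv}^{(i)})+d\ones\|_\infty$ alone does not give the one-sided inequality needed for feasibility.
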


This result directly implies the desired bound on the integrality gap, i.e., Lemma \ref{lemma:condGap}.

\begin{proof}[~of Lemma \ref{lemma:condGap}]
    Consider a conditioning of $\mathcal{N}_0$ and $((\cv^{(i)}_j, \colij)_{(i,j) \notin \cN_0}$ that belongs to the event $Good$; by employing the bound from Lemma \ref{lemma:mainDeltai} on \eqref{eq:deltaRecap}, under this conditioning we have $$\Delta \le  O_m\bigg(\frac{{sf}\log^2 n}{n}\bigg)$$ with probability at least $1 {- \frac{s+1}{n^2}-(s+1)\pi^f}
    $. Taking expectation over all the conditionings of $\mathcal{N}_0$ and $((\cv^{(i)}_j, \colij)_{(i,j) \notin \cN_0}$ that belongs to the event $Good$ gives
    \begin{align*}
        \Pr\bigg(\Delta \le O_m\bigg(\frac{{sf}\log^2 n}{n}\bigg) ~\bigg|~ Good \bigg) \ge 1 {- \frac{s+1}{n^2}-(s+1)\pi^f}
    ,
    \end{align*}
    as desired.
\end{proof}

    So for the remainder of the section we prove Lemma \ref{lemma:mainDeltai}. 


\subsubsection{Proof of Lemma~\ref{lemma:mainDeltai}}

We start by formalizing the previous claim that conditioning on the index set
$\cN_0$ and on the columns $\big((\cv^{(i)}_j, A^{(i)}_j)\big)_{(i,j)\notin \cN_0}$
at the remaining indices fixes the optimal dual vector $\mu_{\diamond}$ (almost
surely), while leaving the columns on the zero coordinates independent and
uniformly distributed, subject only to having nonpositive reduced cost.
To this end, for a dual vector $\mu^{(i)} \in \R_+^m$, define
\begin{align*}
    \nrc^{(i)}_{\mu^{(i)}} :=
    \{ (z,\yv) \in [0,1]^{1+m} : z - \ip{\yv}{\mu^{(i)}} \le 0 \}.
\end{align*}
Thinking of $(z,\yv)$ as the data of a column in our problem (where $\zv$ represents
the gain), $\nrc^{(i)}_{\mu^{(i)}}$ is the set of column values that yield
nonpositive reduced cost with respect to the dual vector $\mu^{(i)}$.
We will use the following result from
\cite{dyer1989probabilistic,borst2023integrality}.
Although the result is stated for a single-block instance, it applies directly
here by viewing \eqref{eq:IP} as a single-block instance.

\begin{lemma}\cite{dyer1989probabilistic,borst2023integrality}
    \label{lem:cond_indp}
    Conditional on the value of the set $\mathcal{N}_0$, the data $((\cv^{(i)}_j, \colij))_{(i,j) \notin \cN_0}$ in the remaining indices uniquely determines the optimal primal/dual Lagrangian solutions $\xv_{\diamond},\mu_{\diamond}$ almost surely. If we further condition on an exact value of $((\cv^{(i)}_j, \colij))_{(i,j) \notin \cN_0}$ that determines $\xv_{\diamond},\mu_{\diamond}$ uniquely, then the columns $(\cv^{(i)}_j,\colij)$ for $(i,j) \in \cN_0$ are independent and each is uniformly distributed in the set $\nrc^{(i)}_{\mu^{(i)}_{\diamond}}$.
\end{lemma}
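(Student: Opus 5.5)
The plan is to reduce the full problem to a restricted LP supported outside $\cN_0$, and then check that the event ``the zero set equals $\cN_0$'' factorizes over the columns in $\cN_0$. Throughout, I view \eqref{eq:IP} as a single-block LP with right-hand side $\bv_{\diamond}$. For block $0$ I use the reduced cost $s\,c^{(0)}_j - \sum_i \iprod{\mu^{(i)}}{\av^{(0)}_j} = s\big(c^{(0)}_j - \iprod{\mu^{(0)}}{\av^{(0)}_j}\big)$, so its sign condition is exactly membership in $\nrc^{(0)}_{\mu^{(0)}}$.

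\textbf{Step 1: genericity.} Since the data have a continuous joint density, the LP with right-hand side $\bv_{\diamond}$ is almost surely primal and dual nondegenerate. Hence its saddle point $(\xv_{\diamond},\mu_{\diamond})$ is unique, and no column has reduced cost exactly $0$ with respect to $\mu_{\diamond}$ unless its variable is fractional. The same holds simultaneously for every restriction of the LP to a subset $S$ of the variables (the others fixed to $0$), since there are only finitely many subsets $S$, each failing on a null set.

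\textbf{Step 2: the restricted LP determines the solution.} Fix a candidate set $\cN_0$. Let $\LP_{\cN_0}$ be the LP in which the variables in $\cN_0$ are fixed to $0$; its data are exactly $((\cv^{(i)}_j,\colij))_{(i,j)\notin\cN_0}$, and let $(\tilde\xv,\tilde\mu)$ be its almost surely unique saddle point. The argument rests on the KKT conditions \ref{cs.i}--\ref{cs.ii}, which characterize saddle points.
\begin{itemize}
\item If $\cN_0$ is indeed the zero set of $\xv_{\diamond}$, then $(\xv_{\diamond},\mu_{\diamond})$ restricted to the complement of $\cN_0$ satisfies the KKT conditions of $\LP_{\cN_0}$. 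By uniqueness it equals $(\tilde\xv,\tilde\mu)$, so the outside data determine $(\xv_{\diamond},\mu_{\diamond})$.
\item Conversely, the event $\{\text{zero set} = \cN_0\}$ coincides, up to a null set, with the event $F\cap G$, where $F$ depends only on the outside data and $G$ is
\[
G=\big\{(\cv^{(i)}_j,\colij)\in \nrc^{(i)}_{\tilde\mu^{(i)}}\ \text{for all } (i,j)\in\cN_0\big\}.
\]
Here $F$ is the event that $\tilde\xv$ has no zero coordinates outside $\cN_0$. To see the inclusion, extend $\tilde\xv$ by zeros on $\cN_0$: the KKT conditions for the full LP hold exactly when every added column has nonpositive reduced cost with respect to $\tilde\mu$, so by uniqueness the extension is the saddle point. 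Conversely, a column in $\cN_0$ with positive reduced cost would force $x_{\diamond j}=1$ by \ref{cs.ii}, contradicting $j\in\cN_0$.
\end{itemize}

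\textbf{Step 3: factorization.} The joint density of all columns is uniform, i.e.\ a product over $(i,j)$. Multiplying it by the indicator of $F\cap G$ gives, as a function of the inside columns for fixed outside data, the product $\prod_{(i,j)\in\cN_0}\ones\big[(\cv^{(i)}_j,\colij)\in\nrc^{(i)}_{\tilde\mu^{(i)}}\big]$. This is because $\tilde\mu$ depends only on the outside data, and so does $F$. Normalizing therefore shows that, conditionally, the columns in $\cN_0$ are independent and each is uniform on $\nrc^{(i)}_{\mu^{(i)}_{\diamond}}$. The case $\Pr(F\cap G)=0$ for the given outside data only affects a measure-zero set of conditionings, matching the footnote in Lemma~\ref{lemma:mainDeltai}.

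The main obstacle I expect is Step 2: making rigorous that the event $\{\text{zero set}=\cN_0\}$ equals $F\cap G$ up to null sets. This requires the genericity of Step 1 to hold uniformly over all restricted LPs. It also requires handling the coupling through $\mu^{(0)}=\frac1s\sum_i\mu^{(i)}$, so that the block-$0$ columns condition on $\tilde\mu^{(0)}$ correctly. Rather than reprove genericity, I would first try to import it, following the single-block treatment in \cite{dyer1989probabilistic,borst2023integrality}.
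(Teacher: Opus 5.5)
Your Steps 2--3 are the standard Dyer--Frieze conditioning argument, which is all the paper relies on: it gives no proof, only the citation and the remark that the single-block result ``applies directly'' to \eqref{eq:IP} viewed as one block. Given almost-sure uniqueness of the saddle point of the full LP and of every restricted LP, two parts of your argument are correct: the identification of $\{\text{zero set}=\cN_0\}$ with $F\cap G$, and the product-density factorization, including the block-$0$ columns via $\mu^{(0)}=\frac1s\sum_i\mu^{(i)}$.

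\textbf{The gap is Step~1.} It cannot be imported from the single-block theory, which assumes a constraint matrix with independent continuous entries. Here $A^{(0)}$ is repeated in all $s$ coupling row-blocks, the scenario columns have structural zeros, and $\bv_\diamond$ is deterministic and may repeat across scenarios. So a density for the data does not give nondegeneracy, and nondegeneracy genuinely fails. Take $m=1$, $s=2$, $\beta^{(1)}=\beta^{(2)}$, and write $b_\diamond:=b^{(1)}_\diamond=b^{(2)}_\diamond$. Consider the positive-probability event that every second-stage column has $c^{(i)}_j\le\delta$ and $a^{(i)}_j\ge\frac12$, and every first-stage column has $c^{(0)}_j\ge\frac12$ and $a^{(0)}_j\ge1-\delta$, with $\delta$ small. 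Let $h(\lambda)=\sum_j(2c^{(0)}_j-\lambda a^{(0)}_j)^++\lambda b_\diamond$; it is minimized at some $\lambda^*\ge1$. The dual objective is always at least $h(\mu^{(1)}+\mu^{(2)})$, with equality whenever both $\mu^{(i)}\ge2\delta$. Consequently:
\begin{itemize}
\item every split $\mu^{(1)}+\mu^{(2)}=\lambda^*$ with $\mu^{(i)}\ge2\delta$ is optimal, so $\mu_\diamond$ is not unique;
\item $\xv^{(1)}_\diamond=\xv^{(2)}_\diamond=0$, so $\cN_0$ contains all of blocks $1$ and $2$;
\item the zero-set event then constrains the inside columns through $\rho_1+\rho_2\le\lambda^*$, where $\rho_i:=\max_j c^{(i)}_j/a^{(i)}_j$, and this is not a product event.
\end{itemize}
So Step~3, and the lemma's conclusion itself, fail on this event. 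The paper's ``applies directly'' hides exactly this issue.

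\textbf{A cheap repair.} Since $\bv_\diamond$ is only an artifact of the analysis, set $\bv_\diamond:=\bv-2d\ones-\bm{\eta}$, where $\bm{\eta}$ is independent of the data and has a density on $[0,\frac1n]^{ms}$. This only adds feasibility slack and a negligible $O(\|\mu^{(i)}_\diamond\|_1/n)$ term in Lemma~\ref{lemma:startBoundGap}. Now fix any of the finitely many bases of the full LP or of any restricted LP.
\begin{itemize}
\item An extra tight row $(i',r')$ forces $\eta^{(i')}_{r'}$ to a single value, and $\eta^{(i')}_{r'}$ does not enter the basic solution.
\item A zero reduced cost for a nonbasic column $(i,j)$ forces $c^{(i)}_j$ to a single value, and $c^{(i)}_j$ does not enter the basic dual.
\end{itemize}
Both are null events. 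Hence all these LPs are a.s.\ primal and dual nondegenerate with unique saddle points, and your Steps~2--3 go through verbatim once $\bm{\eta}$ is added to the conditioning. If you keep the paper's deterministic $\bv_\diamond$, you would at least need the $\bv^{(i)}_{\diamond r}$ to be pairwise distinct in $i$, plus a basis-by-basis check that each degeneracy condition is a nontrivial polynomial identity in the data.
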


For the remainder of the proof, we condition on the value of the index set
$\cN_0$ and on the data $\big((\cv^{(i)}_j, \colij)\big)_{(i,j)\notin \cN_0}$ for
the remaining columns, such that the pair $(\xv_{\diamond}, \mu_{\diamond})$ is
uniquely defined and the event $Good$ holds.
That is, for every block $i \in [s] \cup \{0\}$ we have
$\big\|\mu_{\diamond}^{(i)}\big\|_{\infty} \leq \frac{1}{2\alpha}$, and whenever
$\big\|\mu_{\diamond}^{(i)}\big\|_1 \ge \frac{f \cdot \log n}{2 \alpha n}$ we also
have
$\big|\mathcal{N}_0^{(i)}\big|
\geq
\frac{n}{2^{m+2}} \cdot
\min\Big\{ \tfrac{1}{4} \normm{\mu^{(i)}_\diamond}_1, 1 \Big\}.$
We denote this conditioning (i.e., the set of outcomes satisfying these
conditions) by $Cond$.

We now construct the desired rounded solution
$\bar{\xv} = (\bar{\xv}^{(0)}, \ldots, \bar{\xv}^{(s)}) \in \{0,1\}^{(s+1)\cdot n}$
by the following two steps:
(i) rounding down the fractional coordinates of $\xv_{\diamond}$, and
(ii) setting some of the originally zero coordinates to $1$.
More precisely, let $\lfloor \xv^{(i)}_{\diamond} \rfloor$ denote the vector
obtained by rounding down $\xv^{(i)}_{\diamond}$ componentwise, and let
$\chi_S \in \{0,1\}^n$ denote the indicator vector of a set $S \subseteq [n]$.
We define $
\bar{\xv}^{(i)} := \lfloor \xv^{(i)}_{\diamond} \rfloor + \chi_{T^{(i)}},$
for some set $T^{(i)} \subseteq \cN_0^{(i)}$ of coordinates that are set to $1$.

By construction, the vector $\bar{\xv}^{(i)}$ preserves all $1$-entries of
$\xv^{(i)}_{\diamond}$, which establishes Item~1 of
Lemma~\ref{lemma:mainDeltai}.
We now construct the sets $T^{(i)}$, for each block
$i \in [s] \cup \{0\}$, so as to satisfy
\begin{gather}
    A^{(i)} \bar{\xv}^{(i)}
    \le A^{(i)} \xv^{(i)}_{\diamond} + d \ones,
    \label{eq:mainDeltaItem2} \\[2pt]
    \Delta^{(i)}(\bar{\xv}^{(i)})
    \le O_m\!\left(\frac{f \log^2 n}{n}\right).
    \label{eq:mainDeltaItem3}
\end{gather}

The second inequality is precisely Item~3 of
Lemma~\ref{lemma:mainDeltai}.
The first inequality implies Item~2, namely the feasibility of $\bar{\xv}$.
Indeed, recall that the original right-hand side for scenario $i$ in the packing
problem is
$\bv^{(i)} = \bv^{(i)}_{\diamond} + 2d\ones$.
Since $\xv_{\diamond}$ is feasible for the modified right-hand side, we have
\[
A^{(0)} \xv_{\diamond}^{(0)} + A^{(i)} \xv_{\diamond}^{(i)}
\le \bv^{(i)}_{\diamond}.
\]
Therefore, inequality~\eqref{eq:mainDeltaItem2} implies
\[
A^{(0)} \bar{\xv}^{(0)} + A^{(i)} \bar{\xv}^{(i)}
\le \bv^{(i)}_{\diamond} + 2d\ones
= \bv^{(i)},
\]
which establishes the feasibility of $\bar{\xv}$.

Now for each block $i \in [s] \cup \{0\}$, we define the set $T^{(i)}$ to satisfy \eqref{eq:mainDeltaItem2} and \eqref{eq:mainDeltaItem3} (with high probability), breaking into two cases based on the size of $\normm{\mu^{(i)}_{\diamond}}_1$ as follows.

\paragraph{Case 1 if $\normm{\mu_{\diamond}^{(i)}}_1 < \frac{f \log n}{2\alpha n}$:} 
Here we choose the set $T^{(i)}$ to be empty, that is, no additional coordinate is
set to $1$ in $\bar{\xv}^{(i)}$, so that
$\bar{\xv}^{(i)} = \lfloor \xv^{(i)}_{\diamond} \rfloor$.
Clearly, $\bar{\xv}^{(i)}$ satisfies~\eqref{eq:mainDeltaItem2}.
Moreover, in this case $\Delta^{(i)}(\bar{\xv}^{(i)})$ (recall its definition
in~\eqref{eq:delta_i}) reduces to
\begin{align*}
\Delta^{(i)}(\bar{\xv}^{(i)})
&=
\big\|\mu^{(i)}_{\diamond}\big\|_1 \cdot
\big\| A^{(i)} \big(\xv^{(i)}_{\diamond}
- \lfloor \xv^{(i)}_{\diamond} \rfloor \big) + d \ones \big\|_{\infty} \\
&<
\frac{f \log n}{2\alpha n} \cdot
\big\| A^{(i)} \big(\xv^{(i)}_{\diamond}
- \lfloor \xv^{(i)}_{\diamond} \rfloor \big) + d \ones \big\|_{\infty},
\end{align*}
where the inequality follows from the assumption
$\|\mu^{(i)}_{\diamond}\|_1 < \frac{f \log n}{2\alpha n}$.

To upper bound the remaining term, we first observe that the optimal solution
$\xv_{\diamond}$ is a \emph{basic} optimal solution (by uniqueness).
By standard arguments, $\xv^{(i)}_{\diamond}$ therefore has at most $m$
fractional coordinates; consequently,
$\xv^{(i)}_{\diamond} - \lfloor \xv^{(i)}_{\diamond} \rfloor$
has at most $m$ nonzero entries (see Appendix~\ref{app:fractional} for details).
Since all entries of $A^{(i)}$ are at most $1$, this implies
\begin{align}
0 \le
A^{(i)} \big(\xv^{(i)}_{\diamond}
- \lfloor \xv^{(i)}_{\diamond} \rfloor \big)
\le m \ones.
\label{eq:fractional}
\end{align}

Substituting~\eqref{eq:fractional} into the previous bound yields
\begin{align*}
\Delta^{(i)}(\bar{\xv}^{(i)})
&<
\frac{f \log n}{2\alpha n} \cdot (m + d)
\;\le\;
\frac{f \log n}{2\alpha n} \cdot 4 m \log n
\;\le\;
O_m\!\left(\frac{f \log^2 n}{n}\right),
\end{align*}
where the second inequality uses the definition of $d$ in~\eqref{eq:defD},
which implies $d \le 3\alpha m \log n \le 3 m \log n$.
Thus, $\bar{\xv}^{(i)}$ satisfies~\eqref{eq:mainDeltaItem3}, completing the
analysis of this case.

\paragraph{Case 2 if $\normm{\mu_{\diamond}^{(i)}}_1 \ge \frac{f  \log n}{2\alpha n}$:} In this case the construction of the set $T^{(i)}$ is significantly more involved. 

From Lemma~\ref{lem:cond_indp}, we know that, under the conditioning $Cond$, the
columns indexed by $j \in \cN_0^{(i)}$ of block $i$ remain independent and
uniformly distributed over the set
$\nrc^{(i)}_{\mu^{(i)}_\diamond}$ of column values with nonpositive reduced cost
with respect to $\mu^{(i)}_\diamond$.
However, the nonpositive reduced cost constraint still induces undesirable
correlations among the entries of columns in $\cN_0^{(i)}$.
To obtain additional structure, we therefore restrict attention to a box-like
subset of $\nrc^{(i)}_{\mu^{(i)}_\diamond}$ that consists only of column values
whose reduced costs are not too negative, so that their contribution to
$\Delta^{(i)}$ remains controlled.

\begin{claim}
\label{claim:good_col}
For every $i \in [s] \cup \{0\}$ and vector $\mu^{(i)} \in \R^m_+$ such that $\frac{f \log n}{2\alpha n} \le \norm{\mu^{(i)}}_1$ and $\norm{\mu^{(i)}}_\infty \le \frac{1}{2\alpha}$, there exists a subset $M_{\mu^{(i)}}^{(i)} \subseteq \nrc_{\mu^{(i)}}^{(i)}$ with following properties:
\begin{enumerate}
    \item \vspace{-4pt} (Small reduced cost wrt $\mu^{(i)}$) Every vector $(z,\yv) \in M^{(i)}_{\mu^{(i)}}$ satisfies $z  - \ip{\yv}{\mu^{(i)}} \ge -\frac{f \log n}{n}$.


    \item \vspace{-4pt}  (Box-like) If $(Z,Y) \in \R \times \R^m$ is a random variable uniformly distributed in $M_{\mu^{(i)}}^{(i)}$, then $Y$ is uniformly distributed in $[\alpha,2\alpha]^m$.

    \item \vspace{-4pt} 
    (Relative volume) $\dfrac{\vol\big(M^{(i)}_{\mu^{(i)}}\big)}{\vol\big(\nrc^{(i)}_{\mu^{(i)}}\big)} \ge \dfrac{f \log n \cdot \alpha^{m}}{n \cdot \min\big\{\normm{\mu^{(i)}_\diamond}_1, 1 \big\}}$. 
\end{enumerate}
\end{claim}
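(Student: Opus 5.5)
We would construct $M^{(i)}_{\mu^{(i)}}$ explicitly as a thin layer just below the reduced-cost hyperplane, sitting over the box $[\alpha,2\alpha]^m$. Writing $\mu := \mu^{(i)}$ and $\eta := \frac{f\log n}{n}$, we define
\[
M^{(i)}_{\mu} := \Big\{ (z,\yv) : \yv \in [\alpha,2\alpha]^m,\ \ip{\yv}{\mu} - \eta \le z \le \ip{\yv}{\mu} \Big\}.
\]
We first check that $M^{(i)}_\mu \subseteq \nrc^{(i)}_\mu$, which requires $z \in [0,1]$. For the upper end, $\ip{\yv}{\mu} \le 2\alpha \|\mu\|_1$, and we need this to be at most $1$. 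This does not follow from $\|\mu\|_\infty \le \frac{1}{2\alpha}$ alone, since $\|\mu\|_1$ could be as large as $\frac{m}{2\alpha}$. We would therefore intersect with $z \le 1$; this is harmless for Item~2 only if the cap is never active. To guarantee that, we would instead shrink the $\yv$-box in the directions of large $\mu$, or rescale. The cleaner fix is to replace the fixed layer of thickness $\eta$ by a layer of thickness $\eta' := \min\{\eta, \ip{\yv}{\mu}\}$ clipped at $0$, and to handle $z\le 1$ by noting that $2\alpha\|\mu\|_1 \le 2\alpha m\cdot\frac{1}{2\alpha}$ can exceed $1$. I expect this compatibility with $[0,1]$ to be the main nuisance.

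For the lower end $z \ge 0$, we use $\ip{\yv}{\mu} \ge \alpha\|\mu\|_1 \ge \frac{f\log n}{2n}$. So if we take the layer thickness to be $\eta/2$ rather than $\eta$, we get $z \ge 0$ automatically. Item~1 then holds trivially, with reduced cost $z - \ip{\yv}{\mu} \in [-\eta/2, 0]$.

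Item~2 holds because, for every $\yv$ in the box, the fiber in $z$ is an interval of the same length $\eta/2$. Hence the marginal of $Y$ is uniform on $[\alpha,2\alpha]^m$. The upper constraint $z \le 1$ must not cut fibers unequally. If $2\alpha\|\mu\|_1 > 1$, we would instead take the $\yv$-box to be $[\alpha,2\alpha]^m$ scaled so that the fibers all lie below $1$. Here we would use $\alpha < \tfrac14$, and we would try to argue that the relevant regime allows $\ip{\yv}{\mu} \le 2\alpha\cdot\|\mu\|_1$ together with the width bound. If this fails, we would restrict to the subcase in which the cap is inactive and absorb the rest into constants of Item~3.

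For Item~3, $\vol(M) = \alpha^m \cdot \frac{\eta}{2}$. We bound $\vol(\nrc^{(i)}_\mu) = \int_{[0,1]^m} \min\{\ip{\yv}{\mu},1\}\,d\yv \le \min\{\|\mu\|_1, 1\}$. Dividing gives ratio $\ge \frac{f\log n\,\alpha^m}{2n\min\{\|\mu\|_1,1\}}$, which matches Item~3 up to the factor $2$. To remove the factor $2$ we would use the finer bound $\int \ip{\yv}{\mu}\,d\yv = \frac12\|\mu\|_1$ in the case $\|\mu\|_1 \le 1$. In the case $\|\mu\|_1>1$ we would keep thickness $\eta$ and verify $z\ge0$ via $\alpha\|\mu\|_1 \ge \alpha \ge \eta$ for large $n$. (The statement writes $\mu_\diamond$ in the denominator; we treat it as $\mu^{(i)}$.)
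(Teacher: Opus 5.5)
You have correctly spotted a real defect, but your proposal does not close it, and in fact it cannot be closed. The problematic case is the upper end $z\le 1$ when $2\alpha\|\mu\|_1>1$. This is possible as soon as $m\ge 2$: the hypothesis $\|\mu\|_\infty\le\frac{1}{2\alpha}$ only gives $\|\mu\|_1\le\frac{m}{2\alpha}$, so $\langle\yv,\mu\rangle$ can be as large as $m$ on the box.

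None of the remedies you list resolves this. Using thickness $\min\{\eta,\langle\yv,\mu\rangle\}$ or clipping at $z=1$ makes the fibre length depend on $\yv$. Shrinking or rescaling the $\yv$-box changes the support of $Y$. Restricting to a subcase simply leaves the bad $\mu$'s uncovered. The first two violate Item~2, which requires $Y$ to be uniform on exactly $[\alpha,2\alpha]^m$, and absorbing losses into the constant of Item~3 does not help, because it is Item~2 that fails.

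No choice of $M$ can work. Suppose $2\alpha\|\mu\|_1>1+\eta$, and let $B:=\{\yv\in[\alpha,2\alpha]^m:\langle\yv,\mu\rangle>1+\eta\}$. This set has positive measure, since it contains a relative neighbourhood of $2\alpha\ones$. For $\yv\in B$, Item~1 forces $z>1$, while $M\subseteq\nrc_\mu\subseteq[0,1]^{1+m}$ forces $z\le 1$. Hence $M$ has no points over $B$, so $\Pr(Y\in B)=0$, contradicting Item~2; Item~3 forces $\vol(M)>0$, so $Y$ is well defined. For example, $m=2$ and $\mu=\frac{1}{2\alpha}\ones$ satisfy the hypotheses and give $\langle 2\alpha\ones,\mu\rangle=2$. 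So the claim is false for such $\mu$. The paper's appendix gets past this point only through the incorrect inequality $\langle\yv,\mu\rangle\le 2\alpha\|\mu\|_\infty\le 1$; H\"older actually gives $\|\yv\|_\infty\|\mu\|_1\le 2\alpha\|\mu\|_1$.

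The part you do carry out is correct, and on one point it improves on the paper. The paper's lower-end check $\langle\yv,\mu\rangle-\eta\ge\alpha\|\mu\|_1-\eta\ge 0$ is off by a factor of $2$: the hypothesis $\|\mu\|_1\ge\frac{\eta}{2\alpha}$ only yields $\alpha\|\mu\|_1-\eta\ge-\frac{\eta}{2}$. Your fix has two cases. When $\|\mu\|_1\le 1$, you use thickness $\frac{\eta}{2}$ together with the exact value $\vol(\nrc_\mu)=\frac12\|\mu\|_1$. When $1<\|\mu\|_1\le\frac{1}{2\alpha}$, you use thickness $\eta$, which needs $\eta\le\alpha$ — an extra assumption not among the stated hypotheses, but harmless for $n$ large. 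Together these prove all three items whenever $2\alpha\|\mu\|_1\le 1$, and in particular always for $m=1$.

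Your instinct to rescale the box is the right repair, but it changes the statement rather than proving it. With the box $[\frac{\alpha}{m},\frac{2\alpha}{m}]^m$ one has $\langle\yv,\mu\rangle\le 1$ throughout, and a layer of thickness $\frac{\eta}{2m}$ stays above $z=0$. The cost is only $m$-dependent constants in Item~3 and in the later normalization and choice of $d$ in the discrepancy step.
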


\begin{proof}[~sketch]
The region $M^{(i)}_{\mu^{(i)}}$ is defined as 
%
    \begin{align*}
    M^{(i)}_{\mu^{(i)}} := \bigcup_{y \in [\alpha,2\alpha]^m} \bigg(\Big[\ip{y}{\mu^{(i)}} - \tfrac{f\log n}{n}\,,\, \ip{y}{\mu^{(i)}}\Big] \times \{y\}\bigg).
    \end{align*} 
That is, we place the interval $[\ip{y}{\mu^{(i)}} - \tfrac{f\log n}{n}, \ip{y}{\mu^{(i)}}]$ on the first coordinate for each completion $y \in [\alpha, 2\alpha]^m$ of the other coordinates. It is easy to verify that this region satisfies the desired properties, for details see Appendix \ref{app:claimGoodCol}.
\end{proof}

Since, conditioned on $Cond$, each column $(\cv^{(i)}_j, \colij)$ with
$j \in \cN^{(i)}_0$ is uniformly distributed over
$\nrc^{(i)}_{\mu^{(i)}_{\diamond}}$, Item~3 of the previous claim implies that
such columns satisfy
\[
\Pr\Big( (\cv^{(i)}_j, \colij) \in M^{(i)}_{\mu^{(i)}_{\diamond}}
\;\Big|\; Cond \Big)
\;\ge\;
\frac{f \log n \cdot \alpha^{m}}{n \cdot \min\{\normm{\mu^{(i)}_\diamond}_1, 1\}}.
\]
By the definition of $Cond$, the set $\cN^{(i)}_0$ contains at least
$\frac{n}{2^{m+2}} \cdot
\min\Big\{ \tfrac{1}{4}\normm{\mu^{(i)}_\diamond}_1, 1 \Big\}$
elements. Therefore, the expected number of columns in $\cN^{(i)}_0$ that fall
into $M^{(i)}_{\mu^{(i)}_{\diamond}}$ is at least
\begin{align*}
\frac{f \log n \cdot \alpha^{m}}{n \cdot \min\{\normm{\mu^{(i)}_\diamond}_1, 1\}}
\cdot
\frac{n}{2^{m+2}}
\cdot
\min\Big\{ \tfrac{1}{4}\normm{\mu^{(i)}_\diamond}_1, 1 \Big\}
\;\ge\;
\frac{f \log n \cdot \alpha^{m}}{2^{m+4}}.
\end{align*}

Moreover, using the conditional independence of these columns, we may apply the
Chernoff bound (Lemma~\ref{lemma:chernoff}) conditioned on $Cond$ to obtain a
high-probability version of this statement.

\begin{claim} \label{claim:chernoffM}
    Let $\cM^{(i)} := \Big\{j \in \cN^{(i)}_0 : (\cv^{(i)}_j, \colij) \in M^{(i)}_{\mu^{(i)}_{\diamond}}\Big\}$ denote the set of columns of $\cN^{(i)}_0$ landing in $M^{(i)}_{\mu^{(i)}_{\diamond}}$. Then
    \vspace{-8pt}
    \begin{align*}
        \Pr\bigg(|\cM^{(i)}| \ge \frac{f \log n \cdot \alpha^m}{2^{m+{5}}} ~\bigg|~ Cond \bigg) \,\ge\, 1 - \exp\bigg(- \frac{f \log n \cdot \alpha^m}{2^{m+{7}}} \bigg) \ge 1 - \frac{1}{n^2},
    \end{align*}   
    where the last inequality follows from the assumption of $f \geq \frac{2^{2m+8}}{\alpha^m}$.
\end{claim}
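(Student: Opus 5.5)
The plan is to apply the multiplicative Chernoff bound (Lemma~\ref{lemma:chernoff}) to the sum of indicators $X_j := \ones\big((\cv^{(i)}_j, \colij) \in M^{(i)}_{\mu^{(i)}_{\diamond}}\big)$ over $j \in \cN^{(i)}_0$. This works because, under $Cond$, the index set $\cN^{(i)}_0$ and the vector $\mu^{(i)}_\diamond$ are fixed. By Lemma~\ref{lem:cond_indp}, the columns indexed by $\cN^{(i)}_0$ are conditionally independent, each uniform on $\nrc^{(i)}_{\mu^{(i)}_\diamond}$, so the $X_j$ are independent Bernoulli variables. We are in Case~2, i.e., $\|\mu^{(i)}_\diamond\|_1 \ge \frac{f\log n}{2\alpha n}$ and $\|\mu^{(i)}_\diamond\|_\infty \le \frac{1}{2\alpha}$, so Claim~\ref{claim:good_col} applies with $\mu^{(i)} = \mu^{(i)}_\diamond$.

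First I lower-bound the mean. By Item~3 of Claim~\ref{claim:good_col}, each $X_j$ has success probability at least $\frac{f\log n\,\alpha^m}{n\min\{\|\mu^{(i)}_\diamond\|_1,1\}}$. Item~2 of $Good$ gives $|\cN^{(i)}_0| \ge \frac{n}{2^{m+2}}\min\{\frac14\|\mu^{(i)}_\diamond\|_1,1\}$. Using $\min\{\frac14 t,1\} \ge \frac14\min\{t,1\}$, exactly as in the computation preceding the claim, this yields $\mu_\ast := \E[|\cM^{(i)}| \mid Cond] \ge \frac{f\log n\,\alpha^m}{2^{m+4}}$.

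Next comes the Chernoff step, where one point needs care: the success probabilities are only lower bounds. To handle this, I will either pass to the lower bound $L := \frac{f\log n\,\alpha^m}{2^{m+4}}$ via monotonicity of the lower tail, or dominate each $X_j$ from below by an independent Bernoulli with probability exactly $\frac{L}{|\cN^{(i)}_0|}$, obtained through a coupling with an auxiliary uniform variable. The lower-tail bound $\Pr(S < (1-\delta)\mu) \le \exp(-\delta^2\mu/2)$ with $\delta = 1/2$ then gives
\[
\Pr\Big(|\cM^{(i)}| < \tfrac{L}{2}\Big) \le \exp(-L/8) = \exp\Big(-\tfrac{f\log n\,\alpha^m}{2^{m+7}}\Big).
\]
Since $L/2 = \frac{f\log n\,\alpha^m}{2^{m+5}}$, this is exactly the stated bound.

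Finally, the assumption $f \ge \frac{2^{2m+8}}{\alpha^m}$ gives $\frac{f\alpha^m}{2^{m+7}} \ge 2^{m+1} \ge 2$. Hence the exponent is at least $2\log n$, and the probability is at most $n^{-2}$, adjusting the log base if needed since $2^{m+1} \ge 4$ for $m \ge 1$. I expect the only delicate point to be the Chernoff step with non-identical, lower-bounded probabilities, and the stochastic-domination argument resolves it.
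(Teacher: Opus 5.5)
Your proposal is correct and follows essentially the same route as the paper: apply the multiplicative Chernoff bound with $\delta = 1/2$ to the conditionally independent indicators given by Lemma~\ref{lem:cond_indp}, use the mean lower bound $\frac{f\log n\,\alpha^m}{2^{m+4}}$ computed just before the claim, and use $f \ge 2^{2m+8}/\alpha^m$ for the final $n^{-2}$ bound. Your explicit treatment of the fact that only a lower bound on the mean is known (the monotonicity argument is enough) fills in a step the paper leaves implicit.
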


We now further condition on the set of ``nice'' columns
$\cM^{(i)}$ being equal to a fixed set $S \subseteq [n]$ that $|S| \geq \frac{f \log n \cdot \alpha^m}{2^{m+{5}}} $.
That is, we define $Cond_S \subseteq Cond$ as the subset of scenarios in $Cond$
for which $\cM^{(i)} = S$.
Note that, conditioned on $Cond$, the columns indexed by $\cN^{(i)}_0$ are
independent and uniformly distributed over
$\nrc^{(i)}_{\mu^{(i)}_\diamond}$, and since
$M^{(i)}_{\mu^{(i)}_{\diamond}} \subseteq \nrc^{(i)}_{\mu^{(i)}_\diamond}$,
conditioning further on $Cond_S$ implies that the columns with indices in $S$
are independent and uniformly distributed over
$M^{(i)}_{\mu^{(i)}_{\diamond}}$.
By Item~2 of Claim~\ref{claim:good_col}, this implies that for each $j \in S$,
the vector $\colij$ is uniformly distributed over $[\alpha, 2\alpha]^m$.

Fix such a set $S$ with cardinality at least
$\frac{f \log n \cdot \alpha^m}{2^{m+5}}$, and partition it into
$f' := \frac{f \alpha^m}{2^{m+6} m}$
disjoint subsets $S_1,\ldots,S_{f'}$, each of size exactly $2m \log n$.
(The definition of $f$ guarantees that $S$ is sufficiently large for such a
partition.) 
We will apply the Discrepancy Lemma (Lemma~\ref{lem_discrepancy}) to each subset
$S_t$ in order to identify a suitable set
$T^{(i)} \subseteq S$, which will serve as the additional coordinates of
$\bar{\xv}^{(i)}$ that are set to $1$.
Although this construction succeeds for each $S_t$ only with moderate
probability, aggregating over all subsets $S_t$ amplifies the probability of
finding an appropriate set of coordinates.
Finally, define
$
\zv^{(i)} :=
A^{(i)} \big(\xv^{(i)}_{\diamond}
- \lfloor \xv^{(i)}_{\diamond} \rfloor \big) + d \ones.$

\begin{claim} \label{claim:useDiscrepancy}
    For each set $S_t$, we have the following:
    %
    \begin{align*}
        \Pr\bigg( \textrm{there is } T^{(i)}_t \subseteq S_t \textrm{ such that } \sum_{j \in T^{(i)}_t} A^{(i),j} \in \bigg[\zv^{(i)} - \frac{\ones}{n},~\zv^{(i)} \bigg] ~\bigg|~ Cond_S\bigg) \ge  \frac{1}{2^{m+2}}.
    \end{align*}
\end{claim}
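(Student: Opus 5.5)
We will apply the Discrepancy Lemma (Lemma~\ref{lem_discrepancy}) directly to the columns indexed by $S_t$, after an affine normalization. Set $k := m\log n$, so that $|S_t| = 2k$. Under $Cond_S$, the columns $\colij$ for $j \in S_t$ are independent and uniformly distributed in $[\alpha,2\alpha]^m$ (Item~2 of Claim~\ref{claim:good_col}). The vector $\zv^{(i)}$ is a function of $\xv_{\diamond}$ and of the columns outside $\cN_0$, so it is fixed (deterministic) under this conditioning.

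\textbf{Normalization.} Each coordinate of $\colij$ has mean $\tfrac{3\alpha}{2}$ and variance $\tfrac{\alpha^2}{12}$. We therefore define
\[
Y^j := \tfrac{\sqrt{12}}{\alpha}\big(\colij - \tfrac{3\alpha}{2}\ones\big).
\]
The coordinates of the $Y^j$ are i.i.d.\ and uniform on $[-\sqrt3,\sqrt3]$, with mean $0$ and variance $1$, and they have a continuous density bounded by $M = \tfrac{1}{2\sqrt3}$. So the hypotheses of Lemma~\ref{lem_discrepancy} hold with $a=\sqrt3$ and $\e = \tfrac14$. For any $K \subseteq S_t$ with $|K|=k$,
\[
\sum_{j\in K}\colij = \tfrac{3\alpha k}{2}\ones + \tfrac{\alpha}{\sqrt{12}}\sum_{j\in K}Y^j .
\]
We aim for $\sum_{j\in K}\colij$ to land at the midpoint $\zv^{(i)} - \tfrac{1}{2n}\ones$ of the target box. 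Accordingly we choose the deterministic vector
\[
\vv := \tfrac{\sqrt{12}}{\alpha}\Big(\zv^{(i)} - \tfrac{1}{2n}\ones - \tfrac{3\alpha k}{2}\ones\Big).
\]

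\textbf{Bounding $\|\vv\|_\infty$.} This is where the specific choice of $d$ in \eqref{eq:defD} enters. Since $d = \tfrac{3\alpha k}{2}\big(1+\tfrac{1}{\sqrt3 n^2}\big)$, we get
\[
\zv^{(i)} - \tfrac{3\alpha k}{2}\ones = A^{(i)}\big(\xv^{(i)}_\diamond - \lfloor\xv^{(i)}_\diamond\rfloor\big) + \tfrac{3\alpha k}{2\sqrt3 n^2}\ones .
\]
By \eqref{eq:fractional} this vector lies in $[0,\, m + o(1)]^m$. Hence $\|\vv\|_\infty \le \tfrac{\sqrt{12}}{\alpha}(m+o(1)) \lesssim \tfrac{2\sqrt3 m}{\alpha}$, up to lower-order terms. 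By the assumption $\tfrac{2\sqrt3 m}{\alpha} \le (m\log n)^{1/4} = k^{1/2-\e}$, we obtain $\|\vv\|_\infty \le k^{1/4}$. I expect this to be the main obstacle: the constants must be tracked carefully, and the lower-order terms $\tfrac{1}{2n}$ and $\tfrac{3\alpha k}{2\sqrt3 n^2}$ must not break the inequality. If necessary, I would absorb them by slightly enlarging the constant in the hypothesis, or by using a smaller $\e$ with $n$ large. Also, $k = m\log n$ exceeds the threshold needed in Lemma~\ref{lem_discrepancy} once $n$ is large enough.

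\textbf{Conclusion.} Lemma~\ref{lem_discrepancy} then gives, with probability at least $\tfrac{1}{2^{m+2}}$, a set $K\subseteq S_t$ with $|K|=k$ and
\[
\Big\|\sum_{j\in K}Y^j - \vv\Big\|_\infty \le \tfrac{3k}{4^{k/m}} = \tfrac{3m\log n}{n^2}.
\]
Undoing the scaling, this yields
\[
\Big\|\sum_{j\in K}\colij - \big(\zv^{(i)} - \tfrac{1}{2n}\ones\big)\Big\|_\infty \le \tfrac{\alpha}{\sqrt{12}}\cdot\tfrac{3m\log n}{n^2} \le \tfrac{1}{2n}
\]
for $n$ large. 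Consequently $T^{(i)}_t := K$ satisfies $\sum_{j\in T^{(i)}_t} A^{(i),j} \in \big[\zv^{(i)} - \tfrac{\ones}{n},\, \zv^{(i)}\big]$, which proves the claim.
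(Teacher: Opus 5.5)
Your proposal is correct and follows essentially the same route as the paper: the same normalization $Y^j = \frac{2\sqrt3}{\alpha}\big(\colij - \frac{3\alpha}{2}\ones\big)$, $k = m\log n$, the Discrepancy Lemma with $\e = \frac14$, and the bound on $\|\vv\|_\infty$ via \eqref{eq:fractional} together with the hypothesis $\frac{2\sqrt3 m}{\alpha} \le (m\log n)^{1/4}$. The only difference is where you center: the paper targets $\zv^{(i)} - \theta\ones$ with $\theta = \frac{\alpha}{2\sqrt3}\cdot\frac{3k}{4^{k/m}}$, so that $d = \theta + \frac{3\alpha k}{2}$ makes $\vv$ exactly $\frac{2\sqrt3}{\alpha}A^{(i)}\big(\xv^{(i)}_\diamond - \lfloor \xv^{(i)}_\diamond \rfloor\big)$; with your midpoint centering, the leftover offset $\theta - \frac{1}{2n}$ is nonpositive and smaller than $\frac{1}{2n}$ in magnitude, so the lower-order terms you worried about cost nothing and $\|\vv\|_\infty \le \frac{2\sqrt3 m}{\alpha}$ holds exactly for large $n$.
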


\begin{proof}
    Recall that for each $j \in S_t$, conditioned on $Cond_S$ the vector $A^{(i),j}$ is uniformly distributed in $[\alpha, 2\alpha]^m$, and independent across $j$'s; thus, if we renormalize it as
\begin{align*}
    Y^j := \dfrac{2\sqrt{3}}{\alpha} \bigg(A^{(i),j} - \dfrac{3\alpha }{2} \ones\bigg),
\end{align*}
we see that each coordinate of this vector has mean 0 and variance 1, and the $Y^j_r$'s are independent across $j$ and $r$. Then define $k := \frac{|S_t|}{2} = m \log n$, $\theta := \frac{\alpha}{2 \sqrt{3}} \cdot \frac{3k}{4^{k/m}}$, and the vector $$\vv := \frac{2 \sqrt{3}}{\alpha} \cdot \bigg(\zv^{(i)} - \bigg(\theta + \frac{3\alpha k}{2}\bigg) \ones\bigg).$$ 
Now we can apply the Discrepancy Lemma (Lemma \ref{lem_discrepancy}) to the random vectors $Y^j$ and the target vector $\vv$ to obtain that with probability at least $\frac{1}{2^{m+2}}$ there is a set $T^{(i)}_t \subseteq S$ of size $k$ such that $\Big\|\sum_{j \in T^{(i)}_t} Y^j - \vv\Big\|_{\infty} \le \frac{3k}{4^{k/m}}$. Unraveling the definitions, this is equivalent to $\sum_{j \in T^{(i)}_t} A^{(i),j} \in [\zv^{(i)} - 2\theta \ones, \zv^{(i)}]$. To conclude the proof, it suffices to show that $2\theta \le \frac{1}{n}$:
\begin{align*}
    2\theta = \frac{\alpha}{\sqrt{3}} \cdot \frac{3 m \log n}{4^{\log n}} \le \frac{m \log n}{n^2} \le \frac{1}{n},
\end{align*}
where the last inequality uses the assumption on the size of $n$.

Actually, in order to apply this discrepancy lemma, we crucially need to satisfy the requirement $\|\vv\|_\infty \le k^{1/2-\e}$ for some $\e > 0$. Recalling that $\zv^{(i)} := A^{(i)} \big(\xv^{(i)}_{\diamond} - \lfloor \xv^{(i)}_\diamond \rfloor) + d \ones$, the definition of $\vv$, and noticing that we defined $d = \theta + \frac{3\alpha k}{2}$ in \eqref{eq:defD}, we have
\begin{align*}
    \|\vv\|_{\infty} \,\le\, \frac{2\sqrt{3}}{\alpha} \cdot \Big\|A^{(i)} \big(\xv^{(i)}_{\diamond} - \lfloor \xv^{(i)}_\diamond \rfloor) + \Big(d-\theta - \frac{3\alpha k}{2}\Big)\ones  \Big\|_{\infty} \,=\, \frac{2\sqrt{3}}{\alpha} \cdot \Big\|A^{(i)} \big(\xv^{(i)}_{\diamond} - \lfloor \xv^{(i)}_\diamond \rfloor) \Big\|_{\infty} \le \frac{2 \sqrt{3} m}{\alpha},
\end{align*}
where the last inequality uses again the at most $m$ entries of $\xv^{(i)}_{\diamond}$ are fractional (more precisely, uses \eqref{eq:fractional}). This is at most $k^{1/4} = (m \log n)^{1/4}$, by the {assumption on $n$}, which gives $\|\vv\|_{\infty} \le k^{1/2-\e}$ (with $\e = \frac{1}{4}$) as desired. This concludes the proof of the claim.
\end{proof}


Since each $S_t$ is independent of each other,  
aggregating over the subsets $S_t$ boosts the probability of
finding an appropriate set of coordinates, yielding the following claim, 
where the definition of $f'$ gives $ \big(1 - \frac{1}{2^{m+2}}\big)^{f'} = \big(1 - \frac{1}{2^{m+2}}\big)^{\frac{f\alpha^m}{2^{m+6}m}} = \big(\big(1 - \frac{1}{2^{m+2}}\big)^{\frac{\alpha^m}{2^{m+6}m}}\big)^f = \pi^{f}$. 

\begin{claim} \label{claim:useDiscrepancy2}
    Conditioned on $Cond$, with probability at least $1 - {\frac{1}{n^2}-\pi^f}$ there is a set $T^{(i)}$ of size $m \log n$ such that $\sum_{j \in T^{(i)}} A^{(i),j} \in \big[\zv^{(i)} - \frac{\ones}{n},~\zv^{(i)}\big]$ and $\cv^{(i)}_j - \ip{\colij}{\mu^{(i)}_\diamond} \ge - \frac{f \log n}{n}$ for all $j \in T^{(i)}$.
\end{claim}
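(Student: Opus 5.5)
Under $Cond$ we split on whether the set $\cM^{(i)}$ of nice columns is large, and otherwise amplify Claim~\ref{claim:useDiscrepancy} over the disjoint groups $S_1,\dots,S_{f'}$.

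\emph{Step 1: the bad event for $\cM^{(i)}$.} Let $B_0$ be the event $|\cM^{(i)}| < \frac{f\log n\cdot\alpha^m}{2^{m+5}}$. By Claim~\ref{claim:chernoffM}, $\Pr(B_0\mid Cond)\le \frac1{n^2}$.

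\emph{Step 2: amplification for a fixed $S$.} On $B_0^c$ we have $\cM^{(i)}=S$ for some $S$ with $|S|\ge \frac{f\log n\cdot\alpha^m}{2^{m+5}}$, so $Cond\cap B_0^c$ is the disjoint union of the events $Cond_S$ over such $S$. Fix such an $S$ and its partition $S_1,\dots,S_{f'}$ into blocks of size $2m\log n$ (possible by the choice of $f'$ and the lower bound on $|S|$).

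Conditioned on $Cond_S$, the columns $(\cv^{(i)}_j,\colij)$, $j\in S$, are independent and each is uniform on $M^{(i)}_{\mu^{(i)}_\diamond}$. This follows from Lemma~\ref{lem:cond_indp}: conditioning independent uniforms on $\nrc^{(i)}_{\mu^{(i)}_\diamond}$ to lie in $M^{(i)}_{\mu^{(i)}_\diamond}$ for $j\in S$ and outside it for $j\notin S$ preserves independence. The vector $\zv^{(i)}$ is determined by $\xv_\diamond$ and the data outside $\cN_0$, so it is fixed under $Cond_S$.

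Hence the events
$$G_t:=\Big\{\exists\, T^{(i)}_t\subseteq S_t:\ \textstyle\sum_{j\in T^{(i)}_t}A^{(i),j}\in[\zv^{(i)}-\tfrac{\ones}{n},\zv^{(i)}]\Big\}$$
depend on disjoint independent families of columns, and so are conditionally independent. By Claim~\ref{claim:useDiscrepancy} each has probability at least $2^{-(m+2)}$. Therefore
$$\Pr\Big(\textstyle\bigcap_t G_t^c \;\Big|\; Cond_S\Big)\le \big(1-2^{-(m+2)}\big)^{f'}=\pi^f.$$
If some $G_t$ holds, set $T^{(i)}:=T^{(i)}_t$. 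It has size $k=m\log n$ by the discrepancy lemma, since the lemma produces a set with $|K|=k$. Because $T^{(i)}\subseteq S=\cM^{(i)}$, every $j\in T^{(i)}$ has column in $M^{(i)}_{\mu^{(i)}_\diamond}$, so Item~1 of Claim~\ref{claim:good_col} gives $\cv^{(i)}_j-\ip{\colij}{\mu^{(i)}_\diamond}\ge -\frac{f\log n}{n}$.

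\emph{Step 3: combining.} Averaging over $S$,
$$\Pr(\text{failure}\mid Cond)\le \Pr(B_0\mid Cond)+\sum_S \Pr(Cond_S\mid Cond)\,\pi^f\le \frac1{n^2}+\pi^f.$$

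The main obstacle is Step 2, namely justifying that the successes on different $S_t$ are independent under $Cond_S$. This rests on $\zv^{(i)}$ and $\mu^{(i)}_\diamond$ being measurable with respect to the conditioning, and on the product structure surviving the extra conditioning on $\cM^{(i)}=S$. I would handle this by writing out the conditional density explicitly as a product over $j\in\cN^{(i)}_0$ of indicators of $M^{(i)}_{\mu^{(i)}_\diamond}$ or its complement in $\nrc^{(i)}_{\mu^{(i)}_\diamond}$.
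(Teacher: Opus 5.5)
Your proposal is correct and follows essentially the same route as the paper. The Chernoff bound of Claim~\ref{claim:chernoffM} handles the event that $\cM^{(i)}$ is small. On each $Cond_S$, the conditional independence of the disjoint groups $S_1,\dots,S_{f'}$ amplifies Claim~\ref{claim:useDiscrepancy} to a failure probability of $(1-2^{-(m+2)})^{f'}=\pi^f$, and the reduced-cost bound comes from Item~1 of Claim~\ref{claim:good_col} because $T^{(i)}\subseteq S=\cM^{(i)}$. Your explicit argument that independence and uniformity on $M^{(i)}_{\mu^{(i)}_\diamond}$ survive the extra conditioning on the product event $\cM^{(i)}=S$ fills in a step the paper only asserts.
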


Therefore, we use the set $T^{(i)} \subseteq S$ prescribed by this claim.
When its guarantees hold, we immediately obtain~\eqref{eq:mainDeltaItem2}.
Indeed, the binary solution
\[
\bar{\xv}^{(i)} = \lfloor \xv^{(i)}_{\diamond} \rfloor + \chi_{T^{(i)}}
\]
satisfies
\[
A^{(i)} \bar{\xv}^{(i)}
= \sum_{j \in T^{(i)}} A^{(i),j}
  + A^{(i)} \lfloor \xv^{(i)}_{\diamond} \rfloor
\le \zv^{(i)} + A^{(i)} \lfloor \xv^{(i)}_{\diamond} \rfloor
= A^{(i)} \xv^{(i)}_{\diamond} + d \ones .
\]
In addition, this solution satisfies~\eqref{eq:mainDeltaItem3}:
\begin{align*}
\Delta^{(i)}(\bar{\xv}^{(i)})
&=
\normm{\mu^{(i)}_\diamond}_1 \cdot
\normm{A^{(i)} (\xv^{(i)}_\diamond - \bar{\xv}^{(i)}) + d \ones}_{\infty}
+ \sum_{j \in T^{(i)}}
\Big( \ip{\colij}{\mu^{(i)}_\diamond} - \cv^{(i)}_j \Big) \\
&\le
\frac{m}{2\alpha} \cdot \frac{1}{n}
+ m \log n \cdot \frac{f \log n}{n}
\;\le\;
O_m\!\left(\frac{f \log^2 n}{n}\right),
\end{align*}
where we used:
(i) $\normm{\mu^{(i)}_\diamond}_1 \le m \normm{\mu^{(i)}_\diamond}_\infty
\le \frac{m}{2\alpha}$, with the last inequality following from the definition
of the event $Cond$;
(ii) the bound from Claim~\ref{claim:useDiscrepancy2};

\paragraph{Wrapping up.}
Combining the two cases
$\normm{\mu_{\diamond}^{(i)}}_1 < \frac{f \log n}{2\alpha n}$ and
$\normm{\mu_{\diamond}^{(i)}}_1 \ge \frac{f \log n}{2\alpha n}$ considered above,
we conclude that for a fixed $i$, conditioned on $Cond$, with probability at
least $1 - \frac{1}{n^2} - \pi^f$, the constructed solution
$\bar{\xv}^{(i)}$ satisfies both~\eqref{eq:mainDeltaItem2}
and~\eqref{eq:mainDeltaItem3}.

Finally, applying a union bound over all $i \in [s] \cup \{0\}$, we obtain that,
with probability at least
$1 - \frac{s+1}{n^2} - (s+1)\pi^f$, conditioned on $Cond$, we can simultaneously
construct solutions $\bar{\xv}^{(i)}$ for all blocks $i$ satisfying these
properties.
This completes the proof of Lemma~\ref{lemma:mainDeltai}.

\section{Conclusion}

This paper provides a first rigorous average-case analysis of decomposition-based methods for two-stage stochastic integer programs, with a particular focus on dual decomposition and the Branch-and-Price framework. Under a natural stochastic-input model, we show that the Branch-and-Price search tree has quasi-polynomial size with high probability, substantially improving upon the classical worst-case exponential bounds. A key ingredient in the analysis is an average-case bound on the integrality gap of the natural LP relaxation, which shows that the gap is small even when the number of scenarios grows.
An important future direction is to develop a comparable average-case analysis for Benders-type methods. While this paper focuses on Branch-and-Price and related dual decomposition schemes, Benders decomposition is another widely used approach for two-stage stochastic integer programs. Establishing average-case guarantees for Benders-type algorithms would be an important step toward a more unified theoretical understanding of why decomposition methods perform so well in practice.

\clearpage

\appendix
\noindent {\LARGE \bf Appendix}

\appendix

\section{Probabilistic Inequalities} \label{app:prob}

We will need both the additive and multiplicative Chernoff inequalities. 

\begin{lemma} \cite{vershynin2018high} \label{lemma:chernoff}
    Let $X_1,\ldots,X_k$ be independent random variables in $[0,1]$. Let $S = \sum_i X_i$ be their sum and $\mu := \E S$  its expected value. Then for any $\lambda > 0$ and $\delta \in [0,1]$ we have:
    \begin{align*}
        &\Pr\big(S \ge \mu + \lambda \sqrt{k}\big) \,\le\, e^{- 2\lambda^2}  \textrm{ , } \Pr\big(S \le \mu - \lambda \sqrt{k} \big) \,\le\, e^{- 2\lambda^2}  \textrm{, and } \Pr\big(S \le (1-\delta) \mu \big) \,\le\, e^{- \frac{\delta^2 \mu}{2}}          
    \end{align*}
\end{lemma}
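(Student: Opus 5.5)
This is the Chernoff inequality (Lemma~\ref{lemma:chernoff}). It is standard, and I would prove it with the exponential-moment (Cramér--Chernoff) method, treating the additive and multiplicative forms separately.

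\emph{Additive bounds.} The first step is Hoeffding's lemma: for a random variable $X\in[0,1]$ and any $t\in\R$,
$$\E e^{t(X-\E X)}\le e^{t^2/8}.$$
I would prove this in the usual way. Set $\psi(t)=\log \E e^{t(X-\E X)}$. Then $\psi(0)=\psi'(0)=0$. Moreover, $\psi''(t)$ is the variance of $X$ under the exponentially tilted law, and any variable supported in $[0,1]$ has variance at most $1/4$. Taylor's theorem then gives $\psi(t)\le t^2/8$.

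Next, by independence and Markov's inequality, for every $t>0$,
$$\Pr(S\ge \mu+\lambda\sqrt k)\le e^{-t\lambda\sqrt k}\prod_i \E e^{t(X_i-\E X_i)}\le \exp\!\big(-t\lambda\sqrt k + kt^2/8\big).$$
Choosing $t=4\lambda/\sqrt k$ gives the bound $e^{-2\lambda^2}$. The lower-tail bound follows by applying the same argument to the variables $1-X_i$.

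\emph{Multiplicative lower tail.} Fix $t>0$. By convexity of $x\mapsto e^{-tx}$ on $[0,1]$, we have $e^{-tx}\le 1-(1-e^{-t})x$. Taking expectations and using $1+y\le e^y$,
$$\E e^{-tX_i}\le \exp\big(-(1-e^{-t})\E X_i\big),\qquad\text{hence}\qquad \E e^{-tS}\le \exp\big(-(1-e^{-t})\mu\big).$$
Markov's inequality then gives
$$\Pr\big(S\le(1-\delta)\mu\big)\le \exp\big(t(1-\delta)\mu-(1-e^{-t})\mu\big).$$
Taking $t=-\ln(1-\delta)$ yields the classical bound $\big(e^{-\delta}/(1-\delta)^{1-\delta}\big)^{\mu}$. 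The endpoint $\delta=1$ can be handled by continuity, or directly, since the bound becomes $e^{-\mu}\le e^{-\mu/2}$.

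The only step requiring care is the elementary inequality
$$-\delta-(1-\delta)\ln(1-\delta)\le -\delta^2/2\qquad\text{on }[0,1).$$
I would check it by comparing power series: $(1-\delta)\ln(1-\delta)=-\delta+\sum_{k\ge2}\frac{\delta^k}{k(k-1)}$, so the left side equals $-\sum_{k\ge2}\frac{\delta^k}{k(k-1)}$. Every term is nonnegative and the $k=2$ term is $\delta^2/2$, which gives the claim.

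The main obstacle, though still routine, is Hoeffding's lemma: one has to justify the variance bound for the tilted distribution. Alternatively, I would simply cite \cite{vershynin2018high}, as the paper does.
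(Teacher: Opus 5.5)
Your proposal is correct. The paper gives no proof of its own and simply cites \cite{vershynin2018high}, and your self-contained exponential-moment argument is the standard proof behind that citation: Hoeffding's lemma with $t=4\lambda/\sqrt{k}$ gives both additive tails, and the convexity bound $e^{-tx}\le 1-(1-e^{-t})x$ with $t=-\ln(1-\delta)$, together with your series estimate, gives $e^{-\delta^2\mu/2}$ for general $[0,1]$-valued variables (the case $\delta=0$, where your choice gives $t=0$, is trivial).
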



\section{Proof of Claim \ref{claim:epsNet}} \label{app:epsNet}

We construct the desired $\frac{\log n}{\alpha n}$-net (w.r.t. the norm $\|\cdot\|_1$) of the set $U := \{ \mu \in \R^m_+ : \|\mu\|_{\infty} \le \frac{1}{2\alpha}, \|\mu\|_1 \ge \frac{f \log n}{2 \alpha n}\}$ as follows. 

First, consider the grid with spacing $\frac{\log n}{\alpha n m}$ of the interval $[0,\frac{1}{2\alpha}]$, plus the endpoint $\frac{1}{2\alpha}$, namely the set $G := \big((\frac{\log n}{\alpha n m} \cdot \mathbb{Z}) \cap [0,\frac{1}{2\alpha}]\big) \cup \{\frac{1}{2\alpha}\} = \{0, \frac{\log n}{\alpha n m}, \frac{2\log n}{\alpha n m}, \ldots, \frac{1}{2\alpha}\}$. Then define the $\frac{\log n}{\alpha n}$-net as $\Lambda := (G \times G \times \ldots \times G) \cap U$, namely taking the $m$-dimensional product grid and intersecting with the set $U$. 

Clearly $\Lambda$ is contained in $U$ and satisfies the desired size upper bound $|\Lambda| \le (\frac{mn}{\log n} + 1)^m$. Now we prove its approximation property, namely consider any $u \in U$ and we will exhibit some $v \in \Lambda$ such that $\|u-v\|_1 \le \frac{\log n}{\alpha n}$. Indeed, let $v$ be so that its coordinate $v_j$ is the first point in $G$ greater than or equal to $u_j$; since $u_j \le \frac{1}{2\alpha}$ and we include $\frac{1}{2\alpha}$ in the grid $G$, such point always exists. Moreover, since $v \ge u \ge 0$ we have $\|v\|_1 \ge \|u\|_1 \ge \frac{f \log n}{2\alpha n}$, and so $v$ belongs to $\Lambda$. Finally, by construction of the grid we have $u_j \le v_j \le u_j + \frac{\log n}{\alpha n m}$, and hence $\|u-v\|_1 \le \frac{\log n}{\alpha n}$. So $\Lambda$ is the desired net.


\section{Number of Fractional Coordinates of $\xv^{(i)}_\diamond$} \label{app:fractional}


We observe the following result:
\begin{lem}
    $\xv^{(i)}_\diamond$ has at most $m$ fractional entries for $i \in [s] \cup \{0\}$.
\end{lem}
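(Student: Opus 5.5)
The plan is to avoid any basis-counting over the whole coupled LP and instead use the complementary slackness condition \ref{cs.ii} together with a general-position property of the random columns. This works blockwise, including the first-stage block $i=0$. For block $0$ the relevant dual is the average $\mu^{(0)}_{\diamond}=\frac1s\sum_{i}\mu^{(i)}_\diamond$.

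First step: show that every fractional coordinate has zero reduced cost.
\begin{itemize}
\item Fix a block $i\in[s]$ and suppose $0<\xv^{(i)}_{\diamond j}<1$. By \ref{cs.ii}, the reduced cost cannot be strictly negative, since that forces the entry to $0$. It cannot be strictly positive either, since that forces the entry to $1$. Hence $c^{(i)}_j-\iprods{\mu^{(i)}_\diamond}{\colij}=0$.
\item For block $0$, the coefficient of $x^{(0)}_j$ in $L^{std}_{\bv_\diamond}(\xv,\mu_\diamond)$ is $s\,c^{(0)}_j-\sum_{i}\iprods{\mu^{(i)}_\diamond}{\av^{(0)}_j}=s\big(c^{(0)}_j-\iprods{\mu^{(0)}_\diamond}{\av^{(0)}_j}\big)$. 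The same argument then gives $c^{(0)}_j=\iprods{\mu^{(0)}_\diamond}{\av^{(0)}_j}$.
\item In both cases, every fractional index $j$ of block $i$ has its column $(c^{(i)}_j,\colij)\in\R^{m+1}$ orthogonal to the vector $(1,-\mu^{(i)}_\diamond)$. This vector is nonzero because its first coordinate equals $1$.
\end{itemize}

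Second step: suppose block $i$ had $m+1$ fractional indices $j_1,\dots,j_{m+1}$. Then the $m+1$ columns $(c^{(i)}_{j_k},\av^{(i)}_{j_k})$ would all lie in the hyperplane $H=\{y:\iprods{(1,-\mu^{(i)}_\diamond)}{y}=0\}$ through the origin. Any $m+1$ vectors lying in an $m$-dimensional subspace are linearly dependent, so the $(m+1)\times(m+1)$ matrix formed by these columns would be singular.

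Third step, the probabilistic part: this must be an almost-sure statement, because $\mu_\diamond$ depends on the data. The argument avoids that dependence, since the previous step only uses the fact that the hyperplane passes through the origin.
\begin{itemize}
\item For a fixed block $i$ and a fixed set of $m+1$ indices, the columns are independent with absolutely continuous distribution (uniform on $[0,1]^{m+1}$).
\item The determinant of the corresponding matrix is a nonzero polynomial in the entries. It is therefore nonzero almost surely; equivalently, inductively each new column avoids the span of the previous ones, which has measure zero.
\item A union bound over the finitely many blocks $i\in[s]\cup\{0\}$ and index sets $\{j_1,\dots,j_{m+1}\}\subseteq[n]$ shows that, almost surely, no block has $m+1$ linearly dependent columns.
\end{itemize}
On this probability-one event, every block of $\xv_\diamond$ has at most $m$ fractional entries. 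This is consistent with the measure-zero caveat the paper already makes around Lemma~\ref{lem:cond_indp}.

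The main obstacle I expect is justifying the first step cleanly for block $0$, where the coupling across scenarios enters. The point to check is that the box-constrained maximization in the Lagrangian decouples coordinatewise for fixed $\mu_\diamond$, so \ref{cs.ii} applies with the aggregated coefficient $s\,c^{(0)}_j-\sum_i\iprods{\mu^{(i)}_\diamond}{\av^{(0)}_j}$. Dividing this coefficient by $s$ yields exactly the averaged dual $\mu^{(0)}_\diamond$.

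A secondary subtlety concerns the conditioning used in Section~\ref{sec:goodRounding}. There, columns in $\cN_0$ are not uniform on the cube, but they are not fractional either, so they do not matter for this count. The fractional columns are among the conditioned data, and the almost-sure statement is taken with respect to the unconditional distribution. That is all that is needed to invoke \eqref{eq:fractional}.
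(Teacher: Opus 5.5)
Your proof is correct, but it takes a different route from the paper's.

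\textbf{The paper's route.} The paper uses deterministic basis counting on the coupled LP. It treats $\xv_\diamond$ as a vertex, justified by uniqueness, and eliminates the active box constraints. It then exploits the block-angular form of the remaining square nonsingular system:
\begin{itemize}
\item The block-$i$ columns are supported only on scenario-$i$ rows, so there are at most $\rank(W^{(i)})\le m$ of them.
\item The block-$0$ columns must be linearly independent within the stacked rows $W^{(01)},\dots,W^{(0s)}$. All of these rows are restrictions of the $m$ rows of $A^{(0)}$, so there are at most $m$ such columns.
\end{itemize}

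\textbf{Your route.} You use only \ref{cs.ii}. Fractional coordinates have zero reduced cost, so their columns lie on a hyperplane through the origin with normal $(1,-\mu^{(i)}_\diamond)$; this is the same geometry as $H$ in Lemma~\ref{lemma:dist}. You combine this with the almost-sure general position of the i.i.d.\ continuous columns. Your handling of block $0$ via the aggregated coefficient $s\,c^{(0)}_j-\sum_i\iprods{\mu^{(i)}_\diamond}{\av^{(0)}_j}$ is correct. The key point that the event ``some $m+1$ columns of a block are linearly dependent'' does not involve $\mu_\diamond$ is exactly what neutralizes the data dependence of the dual.

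\textbf{What each buys.}
\begin{itemize}
\item The paper's argument holds for arbitrary data with no genericity, and it uses the two-stage structure directly. However, it applies only to a basic optimal solution, which is why it needs the (itself almost-sure) uniqueness of $\xv_\diamond$.
\item Your argument never examines the coupled constraint matrix and needs no vertex assumption. On a single probability-one event, every saddle point, basic or not, has at most $m$ fractional entries per block. The price is that the statement is inherently almost sure and would fail for degenerate data, such as repeated columns.
\end{itemize}

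Your passage from the unconditional almost-sure statement to almost every conditioning in Section~\ref{sec:goodRounding} is legitimate, and it matches the paper's measure-zero footnote.
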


\begin{proof}
Since $A^{(i)} \in \R^{m \times n}$,
for $i \in [s] \cup \{0\}$, there are at most $m$ independent rows of $A^{(i)}$. Every vertex of the LP relaxation is a solution of a linear system where some constraints of (\ref{eq:sub_LP_relax}) and some constraints of (\ref{eq:LPLast}) are active. Since (\ref{eq:LPLast}) are bound constraints, each active constraint of (\ref{eq:LPLast}) fixes some variable to be $1$ or $0$. For any such linear system defining a vertex,
after eliminating all active constraints of (\ref{eq:LPLast}) and the corresponding variables, the remaining linear system takes form of
\begin{align*}
    \begin{bmatrix}
        W^{(01)} & W^{(1)} & 0 & \cdots & 0 \\
        W^{(02)} & 0 & W^{(2)} & \cdots & 0 \\
        \cdots & 0 & 0 & \cdots & 0 \\
        W^{(0s)} & 0 & 0 & \cdots & W^{(s)} \\
    \end{bmatrix} \xv_{\text{rem}} = \bv_{\text{rem}}
\end{align*}
where $\xv_{\text{rem}},\bv_{\text{rem}}$ are the remaining variables/right-hand-sides and each row of  $W^{(i)}$ is from rows of $A^{(i)}$ and each row of $W^{(0i)}$ is from rows of $A^{(0)}$ that are active. Note that there are at most $m$ linearly independent rows from each $A^{(i)}$ or $A^{(0)}$. Therefore $\rank\left(W^{(i)}\right),\rank\left(  \begin{bmatrix}
        W^{(01)}  \\
        W^{(02)} \\
        \cdots \\
        W^{(0s)} \\
    \end{bmatrix} \right)\leq m$. Since the above system is invertible, each $W^{(i)}$ and  $W^{(0i)}$ has at most $m$ columns.
Therefore at most $m$ entries of $\xv_{\diamond}^{(i)}$ are included in $\xv_{\text{rem}}$, which is potentially set to be fractional.
\end{proof}


\section{Proof of Claim \ref{claim:good_col}} \label{app:claimGoodCol}

We first verify that indeed $M_{\mu^{(i)}}^{(i)} \subseteq \nrc_{\mu^{(i)}}^{(i)}$, for which it suffices to check that $M^{(i)}$ is contained in the unit cube $[0,1]^{1+m}$. This is clearly the case, by definition, for the last $m$ coordinates, so it suffices to verify it for the first coordinate, namely that for every $y \in [\alpha, 2\alpha]^m$, the interval $ [\ip{y}{\mu^{(i)}} - \frac{f\log n}{n}, \ip{y}{\mu^{(i)}}]$ is contained in $[0,1]$, which is indeed the case: by the lower bound on $\mu^{(i)}$ we have $\ip{y}{\mu^{(i)}} - \frac{f\log n}{n} \ge \alpha \|\mu^{(i)}\|_1 - \frac{f\log n}{n} \ge 0$, and by the upper bound on $\mu^{(i)}$ we have $\ip{y}{\mu^{(i)}} \le 2 \alpha \|\mu^{(i)}\|_{\infty} \le 1$. 

We now prove the different items in the claim.

\textbf{Item 1.} The bound on the reduced cost follows directly by the definition of the region $M^{(i)}_{\mu^{(i)}}$: if $(z,y)$ belongs to this region, then $z \ge \ip{y}{\mu^{(i)}} - \frac{f\log n}{n}$, or equivalently, $z - \ip{y}{\mu^{(i)}} \ge -\frac{f\log n}{n}$, as desired. 

\textbf{Item 2.} This also follows directly from the definition of $M^{(i)}_{\mu^{(i)}}$.

\textbf{Item 3.} 
The volume of $\nrc_{\mu^{(i)}}^{(i)}$ is at most $\min\{\|\mu^{(i)}\|_1, 1\}$, because $(z,y) \in \nrc_{\mu^{(i)}}^{(i)}$ means 
    \begin{align*}
      & z - \ip{y}{\mu^{(i)}} < 0 
      \,\implies\,  z < \norm{y}_\infty \|\mu^{(i)}\|_1 \le \|\mu^{(i)}\|_1.
    \end{align*}
and also, by definition, $z \le 1$. Combining this observation with Item 2 gives Item 3, and concludes the proof of the claim.

\label{sec:dual_decomp}

 \addcontentsline{toc}{chapter}{References}
\bibliographystyle{plain}
\bibliography{ref}

\end{document}